\newcommand{\cal}{\mathcal}
\newcommand{\lra}{\longrightarrow}
\newtheorem{lemma1}{}[section]
\newenvironment{lemma}{\begin{lemma1}{\bf Lemma.}}{\end{lemma1}}
\newenvironment{theorem}{\begin{lemma1}{\bf Theorem.}}{\end{lemma1}}
\newenvironment{proposition}{\begin{lemma1}{\bf Proposition.}}{\end{lemma1}}
\newenvironment{corollary}{\begin{lemma1}{\bf Corollary.}}{\end{lemma1}}
\newenvironment{remark}{\begin{lemma1}{\bf Remark.}\rm}{\end{lemma1}}
\newenvironment{definition}{\begin{lemma1}{\bf Definition.}}{\end{lemma1}}
\newenvironment{conjecture}{\begin {lemma1}{\bf Conjecture.}}{\end{lemma1}}
\newenvironment{hypothesisH}{{\bf Hypothesis (H).}}{}
\newenvironment{remark*}{{\bf Remark.}}{}
\newenvironment{example*}{{\bf Example.}}{}
\newcommand{\Q}{\ensuremath{\mathbb{Q}}}
\newcommand{\Z}{\ensuremath{\mathbb{Z}}}
\newcommand{\C}{\ensuremath{\mathbb{C}}}
\newcommand{\N}{\ensuremath{\mathbb{N}}}
\newcommand{\PP}{\ensuremath{\mathbb{P}}}
\newcommand{\dimension}[1]{\ensuremath{\mbox{dim}(#1)}}
\newcommand{\merom}[3]{\ensuremath{#1:#2 \dashrightarrow #3}}
\newcommand{\holom}[3]{\ensuremath{#1:#2  \rightarrow #3}}
\newcommand{\fibre}[2]{\ensuremath{#1^{-1} (#2)}}
\newcommand\sO{{\mathcal O}}
\newcommand{\chow}[1]{\ensuremath{\mathcal{C}(#1)}}
\newcommand{\upX}{\ensuremath{\tilde{X}}}
\newcommand{\upY}{\ensuremath{\tilde{Y}}}
\newcommand{\upE}{\ensuremath{\tilde{E}}}
\newcommand{\upB}{\ensuremath{\tilde{B}}}
\newcommand{\barX}{\ensuremath{\overline{X}}}
\newcommand{\barY}{\ensuremath{\overline{Y}}}
\newcommand{\barE}{\ensuremath{\overline{E}}}
\newcommand{\barB}{\ensuremath{\overline{B}}}
\newcommand{\barT}{\ensuremath{\overline{T}}}
\newcommand{\barDelta}{\ensuremath{\overline{\Delta}}}
\newcommand{\Hplus}{($\mbox{H}^+$) }
\title{Threefolds with quasi-projective universal cover}
\date{\today}
\subjclass[2000]{32Q30, 14E30, 14J30}
\keywords{universal cover, $L^2$-theory, geometric orbifolds, MMP}
\author{Beno\^it Claudon}
\author{Andreas H\"oring}
\address{Beno\^it Claudon, Institut Fourier - UMR 5582 - 100, rue des Maths B.P. 74, 38402 Saint-Martin d'H\`eres, France}
\curraddr{Institut \'Elie Cartan Nancy, Universit\'e Henri Poincar\'e Nancy 1, B.P. 70239, 54506 Vandoeuvre-l\`es-Nancy Cedex, France}
\email{Benoit.Claudon@ujf-grenoble.fr}
\address{Andreas H\"oring, Universit\'e Pierre et Marie Curie and Albert-Ludwig Universit\"at Freiburg} 
\curraddr{Mathematisches Institut, Albert-Ludwigs-Universit\"at
  Freiburg, Eckerstra{\ss}e 1, 79104 Freiburg im Breisgau, Germany}
\email{hoering@math.jussieu.fr}
\begin{document}

\begin{abstract} 
We study compact K\"ahler threefolds $X$ with 
infinite fundamental group whose universal cover
can be compactified. Combining techniques from $L^2$-theory, Campana's geometric orbifolds 
and the minimal model program
we show that this condition imposes strong restrictions on the geometry of $X$.
In particular we prove that if a projective threefold with infinite fundamental group
has a quasi-projective universal cover $\upX$, then $\upX$ is isomorphic to the product
of an affine space with a simply connected manifold.
\end{abstract}

\maketitle


\vspace{-1ex}

\section{Introduction}

\subsection{Main results}

In his book \cite{Sha74}, Shafarevich splits the classification of universal covers of projective manifolds into two types. The type I corresponds to manifolds having a finite fundamental group and their universal covers still belong to the class of projective manifolds. The type II is somehow more delicate to define but in this case the fundamental group is infinite and the universal cover is very far from being projective.
The following question is thus quite natural : can the universal cover be an affine or more generally a quasi-projective variety ?
Tori give a well-known example where the universal cover is affine, but a very interesting
result of Nakayama shows that this is the only possible example:

\begin{theorem} \cite[Thm.1.4]{Nak99} \label{theoremnakayama}
Let $X$ be a projective manifold such that the universal cover $\upX$ contains no positive-dimensional 
compact subvarieties and admits an embedding $\upX \subset \barX$ as a Zariski open subset into a compact complex manifold $\barX$.
If the abundance conjecture holds, then $X$ is (up to finite \'etale cover) an abelian variety.
\end{theorem}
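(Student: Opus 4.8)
The plan is to combine Mori theory, the assumed abundance conjecture, the Beauville--Bogomolov decomposition, and one elementary lifting principle: if $g\colon W\to X$ is a morphism from a compact variety for which the image of $\pi_1(W)$ in $\pi_1(X)$ is finite, then, after replacing $W$ by the finite (still compact) cover trivialising this image, $g$ lifts to $\upX$, producing a positive-dimensional compact subvariety there as soon as $\dim W\ge 1$. The whole proof is organised around forbidding such subvarieties. First I would exploit that $\PP^1$ is simply connected: the normalisation $\PP^1\to X$ of any rational curve lifts to $\upX$, so $X$ can contain no rational curves. By the Cone Theorem there is then no $K_X$-negative extremal ray, hence $K_X$ is nef, and the abundance conjecture upgrades this to $K_X$ semiample. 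Let $f\colon X\to Y$ be the resulting Iitaka fibration onto a normal projective variety with $\dim Y=\kappa(X)$ and $mK_X\sim f^*A$ for some ample $A$ on $Y$.

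The core case is $\kappa(X)=0$, and I claim it already gives the theorem. Here $K_X\equiv 0$, so after a finite étale cover the Beauville--Bogomolov decomposition writes $X\cong T\times Z$ with $T$ an abelian variety and $Z$ a simply connected product of Calabi--Yau and hyperkähler manifolds. Since $\pi_1(Z)$ is trivial, the slices $\{t\}\times Z$ lift to compact subvarieties of $\upX$ by the lifting principle; the hypothesis that $\upX$ contains no positive-dimensional compact subvariety therefore forces $\dim Z=0$. Thus $X$ is, up to finite étale cover, an abelian variety, which is exactly the assertion.

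It remains to exclude $\kappa(X)\ge 1$, and this is where the work lies. When $0<\kappa(X)<\dim X$ the general fibre $F$ of $f$ is a positive-dimensional manifold with $K_F\equiv 0$, so the same Beauville--Bogomolov-plus-lifting argument applied to $F$ shows that every simply connected factor of $F$ would lift to a compact subvariety of $\upX$; hence $F$ itself is, up to finite étale cover, an abelian variety. One is reduced to a fibration in tori over a base whose orbifold canonical class $K_Y+\Delta_f$ is big. I expect this to be the main obstacle: the fibres now have infinite image in $\pi_1(X)$, so they do \emph{not} lift, and the general-type positivity of the orbifold base must be converted into an incompatibility with the existence of the compactification $\upX\subset\barX$. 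This is precisely the point at which Campana's theory of geometric orbifolds is needed, to control the image of $\pi_1(F)$ in $\pi_1(X)$ and to transport the ``no compact subvariety'' constraint from the fibres to a statement about $Y$, ruling out a general-type orbifold base.

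The genuinely different case is $\kappa(X)=\dim X$, where $X$ is of general type and there are no positive-dimensional fibres left to lift. Here the indispensable hypothesis is that $\upX$ is Zariski open in a \emph{compact} manifold $\barX$, rather than an arbitrary bounded domain (for a ball quotient $\upX$ is the ball, which has no compact subvarieties but is not open in any compact complex manifold). My plan is to use $L^2$-theory on $\upX$: bigness of $K_X$ produces, via the $L^2$-index theorem, an abundance of $L^2$ holomorphic pluricanonical forms on $\upX$, and an extension argument across the analytic boundary $\barX\setminus\upX$ would push these to $\barX$ and force positive-dimensional compact subvarieties into $\upX$, a contradiction. Making the last two steps rigorous --- the orbifold $\pi_1$ estimate in intermediate Kodaira dimension and the $L^2$/extension exclusion of general type --- is where the real difficulty concentrates.
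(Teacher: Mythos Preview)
The paper does not prove Theorem \ref{theoremnakayama}: it is quoted from \cite{Nak99}. What the paper does provide is a one-line indication of the mechanism (in the paragraph preceding the discussion of Theorem \ref{theoremchi}): the hypothesis that $\upX$ contains no positive-dimensional compact subvariety means precisely that \emph{every} subvariety $Z\subset X$ has infinite $\pi_1(Z)_X$, i.e.\ $X$ has generically large fundamental group in the sense of Definition \ref{defigammareduction}. Koll\'ar's structure theorem for such manifolds \cite[Ch.~6]{Kol93} then gives, under abundance, that $X$ is (up to finite \'etale cover) birational to an abelian variety; since your observation that $X$ contains no rational curves is correct, this birational map is an isomorphism. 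So the ``paper's proof'' is really: invoke the $\gamma$-reduction/Shafarevich-map machinery directly.

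Your route is different and more laborious: you run abundance first and then argue case by case on $\kappa(X)$. The $\kappa=0$ case is fine and is essentially the endgame of the Koll\'ar approach as well. But two of your cases are not closed. For $\kappa(X)=\dim X$ you propose producing $L^2$ pluricanonical forms and ``pushing them to $\barX$'' to manufacture compact subvarieties of $\upX$; this is vague, and in any event unnecessary: the Kobayashi--Ochiai theorem (Theorem \ref{KO}, applied to the identity fibration on $X$) already forces $\pi_1(X)$ finite when $X$ is of general type, contradicting the existence of an infinite cover. For $0<\kappa(X)<\dim X$ you correctly reduce to a torus fibration, but your assertion that the orbifold base $(Y,\Delta_f)$ is automatically of general type is not justified---the canonical bundle formula involves a moduli part, and extracting bigness of $K_Y+\Delta_f$ alone is exactly the kind of delicate point the paper has to work for in Lemma \ref{lemmageneral}. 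You acknowledge this gap yourself. The Koll\'ar/Shafarevich-map approach sidesteps both problems: once $\gamma\dim(X)=\dim X$ is recognised, no case analysis on $\kappa$ is needed.
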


In this paper we would like to understand how the geometry of a compact K\"ahler manifold 
is governed under a more general assumption on the universal cover that includes quasi-projective manifolds.
More precisely we want to impose the following condition:

\begin{hypothesisH}{\em
\ $X$ is a compact K\"ahler manifold with infinite $\pi_1(X)$ and 
the universal cover $\upX$ admits an embedding 
$\upX \subset \barX$ as a Zariski open set of a compact complex manifold $\barX$.

We say that $X$ satisfies the {\bf Hypothesis \Hplus} if $\barX$ is a K\"ahler manifold. }
\end{hypothesisH}

Note that the more restrictive hypothesis \Hplus is still much more general than supposing that 
$\upX$ is quasi-projective.
By the uniformisation theorem the case of curves is easily settled : 
the elliptic curves are the only ones having at the same time an infinite fundamental group 
and a universal cover which can be compactified. For surfaces we show the following (which is slightly
more general than the result obtained in \cite{Cla10} by the first-named author).

\begin{theorem} \label{theoremclaudon}
Let $X$ be a compact K\"ahler surface satisfying (H). 
Then (up to finite \'etale cover) one of the following holds:
\begin{enumerate}[(1)]
\item $X$ is a ruled surface over an elliptic curve and $\upX \simeq \C \times \PP^1$;
\item $X$ is a torus and the universal cover is $\C^2$.
\end{enumerate}
\end{theorem}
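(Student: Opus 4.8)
The plan is to run through the Enriques--Kodaira classification of compact K\"ahler surfaces, using on $X$ only the single analytic constraint that the embedding $\upX\subset\barX$ imposes on the universal cover. I would first isolate the key mechanism as a lemma: \emph{$\upX$ carries no non-constant bounded holomorphic function.} Indeed $D:=\barX\setminus\upX$ is a proper closed analytic subset, so a bounded holomorphic function on $\upX$ extends across $D$ by the Riemann extension theorem, descending to a bounded holomorphic function on the compact connected manifold $\barX$, hence a constant. The same applies to top-degree forms, and here the argument is metric-free: if $\omega=f\,dw_1\wedge dw_2$ in local coordinates on $\barX$, then for the pulled-back metric $\pi^{*}g_X$ one has $|\omega|^2\,dV=|f|^2\,d\lambda$ (Lebesgue), so every $L^2$ holomorphic $2$-form on $\upX$ has locally $L^2$ coefficients near $D$ and therefore extends, giving an injection $H^0_{(2)}(\upX,K_{\upX})\hookrightarrow H^0(\barX,K_{\barX})$ into a finite-dimensional space. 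Since finite \'etale covers leave $(H)$ and $\upX$ unchanged, and the contraction of a $(-1)$-curve together with its $\pi_1$-orbit may be performed inside $\barX$ by Grauert's criterion, I may assume $X$ minimal; as $\pi_1(X)$ is infinite this already discards the rational and $K3$ cases (simply connected) and the Enriques case (finite $\pi_1$), leaving $\kappa(X)\in\{-\infty,0,1,2\}$.

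To exclude $\kappa(X)=2$ I would use $L^2$-Hodge theory unconditionally. Atiyah's $L^2$-index theorem yields $\dim_{\pi_1}H^{0}_{(2)}(\upX,K_{\upX})=\chi(\O_X)+\dim_{\pi_1}H^{0,1}_{(2)}(\upX)\ge\chi(\O_X)$, where the term $\dim_{\pi_1}H^{0,0}_{(2)}(\upX)$ drops out because a complete manifold of infinite volume carries no nonzero $L^2$ holomorphic function (Yau), and where $\chi(\O_X)=\tfrac1{12}\bigl(K_X^2+c_2(X)\bigr)>0$ for a minimal surface of general type. Positivity of this von Neumann dimension together with the infinitude of $\pi_1(X)$ forces $H^0_{(2)}(\upX,K_{\upX})$ to be infinite-dimensional over $\C$; by the extension principle of the first paragraph these forms all lie in the finite-dimensional space $H^0(\barX,K_{\barX})$, a contradiction.

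The cases $\kappa(X)=1$ and $\kappa(X)=-\infty$ with a base of genus $\ge 2$ are ruled out by the bounded-function principle together with Campana's orbifold theory. If $\kappa(X)=-\infty$ then $X$ is minimal ruled over a curve $B$ with $\pi_1(X)\cong\pi_1(B)$, so $B$ has genus $\ge1$; when $g(B)\ge2$ the base is hyperbolic, $X\to B$ lifts to a non-constant holomorphic map $\upX\to\D$ to the disk, and pulling back the coordinate contradicts the lemma. If $\kappa(X)=1$ the Iitaka fibration is an elliptic fibration $f:X\to B$ whose Campana orbifold base $(B,\Delta_f)$ is of general type; its orbifold universal cover is $\D$, and since $\pi_1(X)\twoheadrightarrow\pi_1^{\mathrm{orb}}(B,\Delta_f)$ the map $f$ lifts, through the cover associated with the kernel, to a non-constant holomorphic map $\upX\to\D$, again producing a bounded non-constant function and a contradiction.

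Two cases survive. If $\kappa(X)=-\infty$ with $g(B)=1$, then $X$ is a $\PP^1$-bundle over an elliptic curve; pulling back along the universal cover $\C=\upB\to B$ and using that $\PP^1$-bundles over the contractible Stein manifold $\C$ are trivial gives $\upX\simeq\C\times\PP^1$, which is case (1). If $\kappa(X)=0$, the minimal surface is, up to finite \'etale cover, a torus or a bielliptic surface---equivalently $\upX$ contains no positive-dimensional compact subvariety and Theorem~\ref{theoremnakayama} applies, abundance being known in dimension two---and in either description $X$ is finite-\'etale-covered by an abelian surface with $\upX\simeq\C^2$, which is case (2). The main obstacle I anticipate is the $\kappa(X)=1$ step: producing a genuine non-constant holomorphic map $\upX\to\D$ requires the full orbifold formalism, namely the surjectivity onto $\pi_1^{\mathrm{orb}}(B,\Delta_f)$ and the uniformization of hyperbolic orbifold curves, rather than a naive base change that multiple fibres would obstruct.
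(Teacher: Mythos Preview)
Your argument has a genuine gap in the $\kappa(X)=1$ step, and it is not the one you anticipated. The assertion that the Campana orbifold base $(B,\Delta_f)$ of the Iitaka fibration is of general type is simply false for properly elliptic surfaces in general. Take a minimal elliptic fibration $f:X\to E$ over an elliptic curve with some $I_1$ fibres and \emph{no} multiple fibres: then $\Delta_f=0$, the orbifold base is $(E,0)$ with $K_E\equiv 0$, yet $c_2(X)=\sum e(F_p)>0$, hence $\chi(\sO_X)>0$ and $K_X\sim_\Q f^*(\mbox{positive degree bundle})$, so $\kappa(X)=1$; moreover $\pi_1(X)$ surjects onto $\pi_1(E)\simeq\Z^2$ and is infinite. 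For such a surface no non-constant map $\upX\to\D$ can be produced from the base, and your argument stalls. The obstacle is not the orbifold uniformisation formalism but the positivity of $K_B+\Delta_f$ itself.

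The fix is already implicit in your own toolkit. You have shown $h^0_{(2)}(\sO_{\upX})=0$ and $h^0_{(2)}(K_{\upX})=0$; feeding both into Atiyah's index theorem gives
\[
\chi(X,\sO_X)=-h^0_{(2)}(\Omega^1_{\upX})\le 0,
\]
not merely the one-sided inequality you extracted for the $\kappa=2$ case. Since a minimal surface with $\kappa\ge 0$ has $K_X^2\ge 0$ and $c_2(X)\ge 0$, Noether forces $\chi(X,\sO_X)=0$, hence $c_2(X)=0$. Now the canonical bundle formula for an elliptic fibration with $c_2=0$ (all singular fibres are multiples of smooth elliptic curves) reads $K_X\sim_\Q f^*(K_B+\Delta_f)$ with no modular contribution, so $\kappa(X)=1$ genuinely forces $\deg(K_B+\Delta_f)>0$ and your orbifold-cover argument goes through. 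This is essentially how the paper proceeds (its Theorem \ref{theoremchisurface}), though the paper then finishes $\kappa=1$ differently: from $c_2=0$ it deduces the fibration is almost smooth, passes to a locally trivial model over a curve of genus $\le 1$, and reaches a contradiction via surface classification rather than via a map to $\D$.

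A smaller point: your reduction to the minimal model is not quite right. The $\pi_1$-orbit of a $(-1)$-curve is an \emph{infinite} disjoint family of compact curves in $\upX$ which may accumulate on $\barX\setminus\upX$, so Grauert's criterion does not apply directly to contract them inside $\barX$. The cleaner observation (the paper's Lemma \ref{lemmadivisor}) is that these curves are pairwise disjoint with self-intersection $-1$, hence linearly independent in $NS(\barX)$, contradicting finite-dimensionality; thus $X$ was already minimal to begin with.
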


For compact K\"ahler threefolds the hypothesis (H) does not lead to a complete classification,
but it still gives a strong restriction on the fundamental group.

\begin{theorem}\label{theoremalmostabelian}
Let $X$ be a compact K\"ahler threefold satisfying (H).
If $X$ is not uniruled suppose moreover that $X$ admits a minimal model\footnote{For projective threefolds Mori has shown the existence
of minimal models. For non-algebraic
compact K\"ahler threefolds the existence of minimal models is still open, cf. Section \ref{subsectionmori}.}.
Then the fundamental group $\pi_1(X)$ is almost abelian and
(up to finite \'etale cover) the Albanese map is a surjective map with connected fibres and trivial orbifold divisor (cf. Definition \ref{definitionorbifold}).
\end{theorem}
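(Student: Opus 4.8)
The plan is to split the argument according to whether $X$ is uniruled, and in each case to reduce the determination of $\pi_1(X)$ and the analysis of $\mathrm{alb}_X$ to lower-dimensional data. The unifying mechanism is that (H) is inherited by the fibres, and --- when these fibres are rationally connected --- by the base, of a suitable fibration: if $f\colon X\to Y$ has general fibre $F$ with $\upX|_F$ of the form $\tilde F$ compactifiable, then pulling back the universal cover yields a family $\upX\to\tilde Y$ with fibre $\tilde F$, and the embedding $\upX\subset\barX$ into a compact manifold forces $\tilde F$ (respectively $\tilde Y$, after contracting the fibres) to admit its own such compactification. This permits an induction on $\dim X$, ultimately landing on the curve case (elliptic curves) and on the surface case resolved by Theorem~\ref{theoremclaudon}. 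A preliminary observation, needed throughout, is that the infinitude of $\pi_1(X)$ together with (H) forces $b_1$ to be positive on some finite \'etale cover, so that the Albanese torus $\mathrm{Alb}(X)$ is nontrivial.

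In the uniruled case I would use the maximal rationally connected fibration $X\dashrightarrow Z$, whose base has $\dim Z\le 2$. After a resolution and a finite \'etale cover this becomes a genuine fibration with rationally connected, hence simply connected, fibres, so that $\pi_1(X)\cong\pi_1(Z)$ and $\upX$ is a locally trivial family of compact simply connected fibres over $\tilde Z$. Thus $Z$ inherits (H), and Theorem~\ref{theoremclaudon} (together with the uniformisation of curves) shows that, up to finite \'etale cover, $Z$ is a complex torus or a ruled surface over an elliptic curve; in both cases $\pi_1(Z)$ is abelian up to finite index. Identifying $\mathrm{Alb}(X)$ with the abelian part of $Z$ then exhibits $\mathrm{alb}_X$ as a surjection whose generic fibre is rationally connected, in particular connected and with trivial orbifold divisor.

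When $X$ is not uniruled we have $\kappa(X)\ge 0$ and, by hypothesis, a minimal model $X'$; abundance for K\"ahler threefolds makes $K_{X'}$ semiample and produces the Iitaka fibration. Combining this with $\mathrm{alb}_X$ I would show that, after a finite \'etale cover, $\mathrm{alb}_X$ is surjective with connected fibres and that its general fibre $F$ has dimension $\le 3-q$, where $q=\dim\mathrm{Alb}(X)$. The essential point is that the image of $\pi_1(F)$ in $\pi_1(X)$ is finite: being a fibre of the Albanese, $F$ contributes nothing to $\mathrm{Alb}(X)$, and since $\tilde F$ compactifies, the lower-dimensional classification forces $F$ to have finite fundamental group (a rational or rationally connected fibre, or a fibre of $K3$ or Enriques type).

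Finally I would assemble the homotopy sequence of the Albanese fibration
\[
\pi_1(F)\lra\pi_1(X)\lra\pi_1^{\mathrm{orb}}\!\bigl(\mathrm{Alb}(X),\Delta\bigr)\lra 1,
\]
where $\Delta$ is the orbifold divisor of Definition~\ref{definitionorbifold}. The triviality of $\Delta$ is the main obstacle. A multiple fibre of multiplicity $m$ over a divisor $D\subset\mathrm{Alb}(X)$ would create an element of order $m$ in $\pi_1^{\mathrm{orb}}$ represented by a small loop around $D$; the associated ramified cover of the base pulls back to a branched structure on $\upX$ along the preimage of $D$, incompatible with $\upX$ being Zariski open in a smooth compact $\barX$. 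Ruling this out is precisely where the $L^2$-estimates and the compactifiability of $\upX$ must be used, in order to control the local monodromy of $\mathrm{alb}_X$ around $D$. Once $\Delta$ is trivial, $\pi_1^{\mathrm{orb}}(\mathrm{Alb}(X),\Delta)=\pi_1(\mathrm{Alb}(X))\cong\Z^{2q}$, and the finiteness of the image of $\pi_1(F)$ presents $\pi_1(X)$ as a finite extension of $\Z^{2q}$, that is, almost abelian, as desired.
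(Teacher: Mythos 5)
Your proposal has several genuine gaps, and the most serious one sits exactly at the hard point of the theorem. You claim that for a general Albanese fibre $F$ the image $\pi_1(F)_X$ is finite ``since $\tilde F$ compactifies'' and ``the lower-dimensional classification forces $F$ to have finite fundamental group.'' This does not work: in the critical case $\kappa(X)=2$, $q(X)=1$ the Albanese fibre is a surface, it is not known to satisfy (H) (its preimage in $\upX$ is a union of covers that are closed in $\upX$ but need not extend to analytic subsets of $\barX$; the paper only compactifies such families under \Hplus via the cycle space, Lemma \ref{lemmafibration}), and even if it did satisfy (H), Theorem \ref{theoremclaudon} produces surfaces with \emph{infinite} fundamental group (tori, ruled surfaces over elliptic curves), not finite. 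The introduction flags precisely this: for a minimal threefold of Kodaira dimension two one cannot read off $\pi_1(X)$ from the fibres of the obvious fibrations. The paper's actual mechanism is the reverse of yours: it proves $\chi(X,\sO_X)=0$ via Atiyah's $L^2$ index theorem (Theorem \ref{theoremchi}), deduces that the Iitaka fibration $\holom{\varphi}{X}{Y}$ onto a surface is almost smooth in codimension one, hence a general type \emph{orbifold} fibration (Lemma \ref{lemmageneral}), and then the orbifold Kobayashi--Ochiai theorem (Corollary \ref{corollary fundamental groups}) shows that $\pi_1(F)\simeq\Z^{\oplus 2}$ of the \emph{elliptic} Iitaka fibre has finite \emph{index} in $\pi_1(X)$. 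Your argument never uses $\chi(X,\sO_X)=0$, which is the crucial input.

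Two further gaps. First, your ``preliminary observation'' that infinitude of $\pi_1(X)$ plus (H) forces $b_1>0$ on a finite \'etale cover is unproven and is essentially the conclusion you are trying to reach; without it your entire Albanese-based strategy has no starting point. The paper instead extracts the abelian structure from the core fibration (Corollary \ref{core}), Campana's theorem that special threefolds have almost abelian $\pi_1$ except possibly when $\kappa=2$, and the Iitaka analysis above; surjectivity, connectedness of fibres and triviality of the orbifold divisor of the Albanese map are obtained separately in Corollary \ref{corollarysurjectivityAlbanese} by applying Kobayashi--Ochiai to Ueno/Kawamata-type general type subvarieties of $\mathrm{Alb}(X)$ and, for the orbifold divisor, to Debarre's theorem that an effective divisor on a torus is numerically a pull-back of an ample divisor from a quotient torus. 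Second, your treatment of the orbifold divisor is explicitly a placeholder (``ruling this out is precisely where the $L^2$-estimates \dots must be used''): the sketched ``branched structure on $\upX$'' incompatible with compactification is not an argument, and $L^2$ theory plays no role there in the paper. Similarly, in the uniruled case the transfer of (H) to the MRC base is not available under (H) alone; fortunately it is also not needed, since the core fibration handles that case directly.
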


Using the compactness of the cycle space implied by the K\"ahler condition in the hypothesis \Hplus, we obtain a complete picture
of the situation.

\begin{theorem} \label{theoremmain}
Let $X$ be a compact K\"ahler threefold satisfying \Hplus. Then (up to finite \'etale cover) the Albanese map is a locally trivial fibration whose fibre $F$ is simply connected and satisfies $\kappa(F)=\kappa(X)$. Moreover, the universal cover of $X$ splits as a product:
$$\upX \simeq F \times \C^{q(X)}.$$
\end{theorem}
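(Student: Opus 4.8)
The plan is to upgrade the structural description of the Albanese map supplied by Theorem~\ref{theoremalmostabelian} into an honest locally trivial fibre bundle, the extra input being the compactness of the cycle space guaranteed by the K\"ahler hypothesis in \Hplus. Since \Hplus implies (H), Theorem~\ref{theoremalmostabelian} applies, and after replacing $X$ by a finite \'etale cover I may assume that $\pi_1(X)$ is abelian and that the Albanese map $\alpha\colon X\to A:=\Alb{X}$ is surjective with connected fibres and trivial orbifold divisor, where $A=\C^{q(X)}/\Lambda$ is a complex torus. After a further finite \'etale cover I may assume in addition that $\pi_1(X)$ is free abelian, so that $\alpha_*\colon\pi_1(X)\to\pi_1(A)$ is an isomorphism. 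The exact sequence $\pi_1(F)\to\pi_1(X)\to\pi_1(A)\to 1$, valid because the orbifold divisor is trivial, then shows that $\pi_1(F)$ maps trivially into $\pi_1(X)$. Lifting to universal covers gives an equivariant fibration $\tilde\alpha\colon\upX\to\C^{q(X)}$ over the universal cover of $A$, and the vanishing of the image of $\pi_1(F)$ shows that each connected component of a fibre of $\tilde\alpha$ is isomorphic to the corresponding compact fibre $F$ of $\alpha$.

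The heart of the argument is to prove that $\alpha$ is a locally trivial holomorphic fibre bundle. First I would observe that a general fibre $F$ has trivial normal bundle $N_{F/X}\cong\alpha^\ast T_A|_F\cong\Osheaf_F^{\oplus q(X)}$, so that $h^0(F,N_{F/X})=q(X)=\dim A$; hence the tautological classifying map $\phi\colon A\to\Chow{X}$, $a\mapsto[\alpha^{-1}(a)]$, is an injective immersion onto an irreducible component of the cycle space, and this component is compact because $\barX$ is K\"ahler. The rigidity encoded in $N_{F/X}\cong\Osheaf_F^{\oplus q(X)}$ means that the fibres admit no deformations beyond the translates already present in the family; using the compactness of $\Chow{\barX}$ to prevent the fibres from degenerating as one moves in $\C^{q(X)}$, I would conclude that $\alpha$ is everywhere submersive with all fibres isomorphic to $F$. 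The Fischer--Grauert theorem then promotes this proper holomorphic submersion with pairwise isomorphic fibres to a locally trivial holomorphic fibre bundle.

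Granting local triviality, the remaining assertions follow formally. Pulling the bundle back along the contractible Stein base $\C^{q(X)}$ trivialises it, so $\tilde\alpha$ is a trivial bundle and $\upX\simeq F\times\C^{q(X)}$; since $\upX$ is simply connected, $F$ is simply connected as well. Finally, writing $X=(F\times\C^{q(X)})/\Lambda$ with $\Lambda$ acting through translations and a monodromy representation $\rho\colon\Lambda\to\mathrm{Aut}(F)$, the identities $K_X|_F=K_F$ and $K_A=0$ give, after a further finite \'etale cover neutralising the action of $\rho$ on the finite-dimensional spaces $H^0(F,mK_F)$, the equality $\kappa(X)=\kappa(F)$; this is in any case a special case of the additivity of the Kodaira dimension over an abelian base.

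The main obstacle I anticipate lies entirely in the no-degeneration step of the second paragraph: translating the compactness of $\Chow{\barX}$ into the statement that no fibre of $\alpha$ degenerates requires a careful analysis of the limit cycles and of their position relative to the boundary $\barX\setminus\upX$, and it is precisely here that the K\"ahler assumption on $\barX$ (rather than on $X$ alone) is genuinely used. Once degeneration is excluded, the normal-bundle computation forces smoothness and rigidity of all fibres, and the rest of the proof is soft.
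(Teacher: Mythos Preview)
Your outline correctly identifies that Theorem~\ref{theoremalmostabelian} reduces matters to proving local triviality of the Albanese map, and your cycle-space argument (the image of $A$ in $\chow{\barX}$ fills an irreducible component of dimension $q(X)$) is essentially Lemma~\ref{lemmafibration}; this is indeed how the paper obtains \emph{smoothness} of $\alpha$ (Proposition~\ref{prop-small-irregularity}, Corollaries~\ref{corollaryfibrationone} and \ref{corollaryfibrationtwo}). The gap is in the passage from smoothness to local triviality. Your claim that ``the rigidity encoded in $N_{F/X}\cong\sO_F^{\oplus q(X)}$ means that the fibres admit no deformations beyond the translates already present'' conflates two different kinds of deformation. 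Triviality of the normal bundle controls the local structure of the \emph{cycle space} near $[F]$ --- every nearby cycle is another $\alpha$-fibre --- but says nothing about whether those fibres are mutually isomorphic as complex manifolds. The example flagged in the introduction is decisive: in a smooth family of Hirzebruch surfaces over an elliptic curve with general fibre $\PP^1\times\PP^1$ and a special fibre $\mathbb F_2$, the map $\alpha$ is a proper submersion, all normal bundles are trivial, the cycle-space component is exactly the base curve, and yet $\alpha$ is not locally trivial. Nothing in your argument excludes this, so ``Fischer--Grauert'' cannot be invoked.

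The paper's route to local triviality is substantially longer and occupies most of Section~4. After smoothness, one runs a relative MMP (whose very existence in the non-algebraic K\"ahler case already uses \Hplus, cf.~Proposition~\ref{propositionfullMMP}; your appeal to Theorem~\ref{theoremalmostabelian} tacitly assumes this) and performs a case analysis on $q(X)$ and $\kappa(X)$ in Theorem~\ref{theoremclassification}. The delicate case is $q(X)=1$ with rational fibres: one must show that if some fibre is $\mathbb F_d$ (or a one-point blow-up thereof) then so are all nearby fibres, and this hinges on Lemma~\ref{lemmahirzebruch}, which uses the compactification $\barX$ to force the negative section to deform over the base. Local triviality is thus obtained only on a minimal model $X'$; the final step (the actual proof of Theorem~\ref{theoremmain}) checks that the blow-ups $X\to X'$ along elliptic sections preserve the product structure $\C\times S$ of the universal cover, which again uses the compactification to pin down the lifted section as a horizontal $\C\times\{\mathrm{pt}\}$.
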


\subsection{Outline of the paper}

The guiding framework of this paper is the theory of geometric orbifolds introduced in \cite{Cam04b}. 
In Section \ref{subsectionorbifolds} we recall parts of this theory and show that Theorem \ref{theoremalmostabelian} generalises to arbitrary dimension if a conjecture of Campana on the fundamental group of special manifolds holds.
Since this is still an open problem (even in dimension three), our starting point will be the following observation: 

\begin{theorem} \label{theoremchi}
Let $X$ be a non-uniruled compact K\"ahler threefold satisfying (H).
Suppose moreover that $X$ admits a minimal model.
Then its Euler characteristic vanishes:
$$\chi(X,\mathcal{O}_X)=0.$$
\end{theorem}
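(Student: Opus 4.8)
The plan is to translate the statement into $L^2$-cohomology on the universal cover and to exploit that this cover sits inside a compact manifold. Write $\Gamma:=\pi_1(X)$, which acts freely, properly discontinuously and cocompactly on \upX\ by biholomorphic isometries for the pullback of a fixed K\"ahler metric on $X$; this pullback metric is complete and of bounded geometry. Applying Atiyah's $L^2$-index theorem to the Dolbeault complex of $\mathcal{O}_X$, the holomorphic Euler characteristic is computed upstairs as
$$\chi(X,\mathcal{O}_X)=\sum_{q=0}^{3}(-1)^q\dim_\Gamma\mathcal{H}^{0,q}_{(2)}(\upX),$$
where $\mathcal{H}^{0,q}_{(2)}(\upX)$ is the space of $L^2$-harmonic $(0,q)$-forms and $\dim_\Gamma$ is its von Neumann dimension. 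Since the metric is complete and K\"ahler, complex conjugation composed with the Hodge $\ast$-operator identifies $\mathcal{H}^{0,q}_{(2)}(\upX)$ with $\mathcal{H}^{q,0}_{(2)}(\upX)$, the space of $L^2$ holomorphic $q$-forms. Hence it suffices to prove that $\mathcal{H}^{q,0}_{(2)}(\upX)=0$ for every $q$.

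The decisive structural input is that, for an infinite group $\Gamma$, a nonzero Hilbert $\Gamma$-module is never finite-dimensional over $\C$: the left regular representation admits no nonzero finite-dimensional invariant subspace. As $\Gamma$ acts by isometries on the $L^2$ holomorphic forms, $\mathcal{H}^{q,0}_{(2)}(\upX)$ is a Hilbert $\Gamma$-module, so it is enough to show that it is \emph{finite-dimensional} as a complex vector space. Here the hypothesis (H) enters: I would bound the $L^2$ holomorphic forms on \upX\ by holomorphic forms on the compactification \barX. If the boundary $\barX\setminus\upX$ had codimension at least two, every holomorphic $q$-form on \upX\ would extend across it by Hartogs' theorem, producing an injection $\mathcal{H}^{q,0}_{(2)}(\upX)\hookrightarrow H^0(\barX,\Omega^q_{\barX})$ into a finite-dimensional space (coherence of $\Omega^q_{\barX}$ on the compact \barX), and the argument would close.

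The main obstacle is the divisorial part $D$ of $\barX\setminus\upX$: across a divisor a holomorphic form may pick up poles, so $H^0(\upX,\Omega^q_{\upX})$ is in general infinite-dimensional and the $L^2$-condition must be used to bound the pole order. The subtlety is that the $L^2$-norm is measured for the complete pullback metric, for which $D$ lies at infinite distance, whereas the extension problem is governed by the geometry of \barX\ near $D$; reconciling these two pictures is the heart of the matter. I would first upgrade the $L^2$-bound to a pointwise sup-bound via bounded geometry, and then analyse the metric along the ends of \upX\ corresponding to $D$ to show that an $L^2$ holomorphic form extends, at worst, to a section of $\Omega^q_{\barX}$ with controlled (logarithmic) poles, spanning a finite-dimensional space. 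It is at this step that I expect the remaining hypotheses to be used: non-uniruledness makes $K_X$ pseudo-effective and passing to a minimal model makes it nef, yielding a singular hermitian metric of non-negative curvature that is exactly the input for an Ohsawa--Takegoshi type $L^2$-extension controlling the forms along $D$ (and recall that $\chi(X,\mathcal{O}_X)$ is a birational invariant, so one is free to replace $X$ by a convenient model). Once finite-dimensionality is secured, the Hilbert $\Gamma$-module dichotomy forces $\mathcal{H}^{q,0}_{(2)}(\upX)=0$ for all $q$, and the index formula gives $\chi(X,\mathcal{O}_X)=0$.
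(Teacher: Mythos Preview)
Your overall idea of passing to $L^2$-cohomology via Atiyah's index theorem and then exploiting the compactification is the right opening move, and the dichotomy ``nonzero Hilbert $\Gamma$-module $\Rightarrow$ infinite-dimensional'' is exactly what the paper uses. But the plan to prove finite-dimensionality of $\mathcal{H}^{q,0}_{(2)}(\upX)$ for \emph{all} $q$ has a genuine gap, and the role you assign to the minimal model hypothesis is not the correct one.

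The extension $H^0_{(2)}(\upX,\Omega^q_{\upX})\hookrightarrow H^0(\barX,\Omega^q_{\barX})$ is only available for free when $q=\dim X$: for top-degree forms the $L^2$-condition $\int \alpha\wedge\bar\alpha<\infty$ is metric-independent, so an $L^2$ canonical form extends across any boundary. For $q=1,2$ the $L^2$-norm is computed against the complete pullback metric, which blows up at the boundary, and there is no general mechanism (Ohsawa--Takegoshi or otherwise) that converts $K_X$ nef into a bound on the pole order of $L^2$ holomorphic $1$- or $2$-forms along a divisor at infinity. Your sketch at this point is not an argument, and I do not believe it can be completed: the paper does \emph{not} claim $h^0_{(2)}(\Omega^2_{\upX})=0$.

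What the paper actually does is split the problem into two inequalities. The $L^2$ side only handles $q=0$ (trivial on a noncompact cover), $q=3$ (the Bergman space, via the metric-independence just mentioned), and $q=1$ (via Gromov's theorem: nonvanishing of $h^0_{(2)}(\Omega^1)$ would force a fibration onto a curve of genus $\geq 2$, contradicting (H)). This leaves
\[
\chi(X,\mathcal{O}_X)=h^0_{(2)}(\Omega^2_{\upX})\geq 0.
\]
The minimal model hypothesis is used for the \emph{other} inequality: since the minimal model is smooth (this is where (H) enters again, via the birational geometry lemma), one gets $\chi(X,\mathcal{O}_X)=-\tfrac{1}{24}K_X\cdot c_2(X)\leq 0$ by a Miyaoka-type argument (done case by case on $\kappa(X)$ in the K\"ahler setting). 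The two inequalities together give $\chi=0$. So the non-uniruled/minimal model assumption is a Chern-class input, not an $L^2$-extension input; your proposal misplaces it.
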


The proof of this fact is a mixture of two types of arguments: we show first that $X$ has a smooth minimal model 
and its Euler characteristic is not positive (in the projective case this is Miyaoka's theorem \cite{Miy87}); we then resort to $L^2$ theory to prove the reverse inequality.

\noindent Our next step is the description of the fundamental group of $X$.
If $X$ is a minimal model with Kodaira dimension two this is a notoriously difficult problem:
if $f: X \rightarrow Y$ is the Iitaka fibration, 
the image of the fundamental group of a general fibre $F$ in $\mathrm{Ker}(\pi_1(X)\to\pi_1(Y))$ could have infinite index. 
In particular we do not learn much about $\pi_1(X)$ by looking at the topology of $F$ and $Y$.
In the situation of Nakayama's theorem the absence of compact subvarieties in the universal cover implies that
image of $\pi_1(F)$ is infinite, 
so Koll\'ar's landmark paper on Shafarevich maps \cite[Ch.6]{Kol93} shows that (up to finite \'etale cover) $X$ is birational to a group scheme.
 
It is at this point that Theorem \ref{theoremchi} is used in a crucial way:
we prove that the property $\chi(X,\mathcal{O}_X)=0$ 
implies that the Iitaka fibration is of general type (cf. Definition \ref{definitiongeneraltype}).
Then we can use Campana's generalisation of the Kobayashi-Ochiai theorem to geometric orbifolds
to prove that the image of $\pi_1(F) \simeq \Z^{\oplus 2}$ has finite index in $\pi_1(X)$.

\medskip

The proof of Theorem \ref{theoremmain} has three independent steps. We first observe that the hypothesis \Hplus gives very
precise information on the birational geometry of $X$ which allows us to prove the existence of a minimal model
in the non-algebraic case. We then establish some general lemmata on fibre spaces satisfying \Hplus
which easily imply that the Albanese map is smooth.  
The most challenging and interesting part of the proof is to show that the Albanese map $\alpha: X \rightarrow E$ over
an elliptic curve is locally trivial: if $X$ is projective and the general fibre a minimal surface, this is due to Oguiso and Viehweg \cite{OV01}. 
If the fibres of $\alpha$ are rational surfaces the problem is more delicate since there are well-known examples of
$\PP^1 \times \PP^1$ degenerating into a Hirzebruch surfaces $\PP(\sO_{\PP^1} \oplus\sO_{\PP^1} (-2))$
(the ``moduli space'' of Hirzebruch surfaces is not Hausdorff).
It turns out that the condition \Hplus 
imposes additional restrictions on the deformation theory of rational curves on $X$ which exclude these jumping phenomena.

Let us conclude with a comment on the K\"ahler condition in our statements.
It is natural to impose that $X$ is K\"ahler: as is well-known from Hodge theory 
being a K\"ahler manifold means exactly that the complex and the differentiable (i.e. topological) structure of $X$ are related.
Indeed there exist plenty of non-K\"ahler compact manifolds covered by quasi-projective varieties:
\begin{enumerate}[(1)]
\item Hopf manifolds (covered by $\C^n\backslash\{0\}$) are never K\"ahler,
\item nilmanifolds are covered by $\C^n$ and complex tori are the only K\"ahler manifolds in this class,
\item generalizing the previous example, the manifold $X=G/\Gamma$ (where $\Gamma$ is a cocompact lattice of the connected and simply connected linear algebraic group $G$) is never K\"ahler as soon as $G$ is not abelian.
\end{enumerate}

The K\"ahler assumption on $\barX$ seems to be less essential and we don't know any example that satisfies (H) but 
not \Hplus. Nevertheless the proof of Theorem \ref{theoremmain} relies heavily on 
the compactness of the cycle space $\chow{\barX}$ assured by the K\"ahler condition  \cite{Ca80}, \cite{Va86}.

{\bf Acknowledgements.} 
The main part of this work was done when the first named author visited
the IHP for the Thematic Term Complex Algebraic Geometry in the spring of 2010.
We  would like to thank the institute for support and for the excellent working conditions.
We would like to thank T. Peternell for discussions
on K\"ahler threefolds and pointing out the references \cite{Pet86, HL83}.

\section{Notation and basic results}

For standard definitions in complex algebraic geometry 
we refer to  \cite{Har77} or \cite{Kau83}.
We will also use the standard terminology of the minimal model program, cf. \cite{Deb01}, \cite{CP97}.
Manifolds and varieties are always supposed
to be irreducible.
Since it will appear quite frequently, let us recall the following terminology: a group G is said to be \emph{almost abelian} if it contains an
abelian subgroup of finite index.

A fibration is a proper surjective map \holom{\varphi}{X}{Y} with connected fibres
from a complex manifold onto a normal complex variety $Y$. 
The $\varphi$-smooth locus is the largest Zariski open subset $Y^* \subset Y$
such that for every $y \in Y^*$, the fibre $\fibre{\varphi}{y}$ is a smooth variety of dimension $\dim X - \dim Y$.
The $\varphi$-singular locus is its complement.
A fibre is always a fibre in the scheme-theoretic sense, a set-theoretic fibre is the reduction of the fibre.

Let $X$ be a complex manifold, and let $D \subset X$ be a  subvariety of $X$. Let
\holom{\nu}{\hat{D}}{D} be the normalisation of $D$, then we set 
$$\pi_1(D)_X:=\mathrm{Im}\left(\nu_*: \pi_1(\hat{D})\lra\pi_1(X)\right).$$

If \holom{\varphi}{X}{Y} is a fibration, 
we have a sequence of maps between fundamental groups:
$$1 \rightarrow \pi_1(F)_X \lra\pi_1(X)\lra\pi_1(Y)\lra1$$
where $F$ denotes a general $\varphi$-fibre. 
This sequence is exact in  $\pi_1(F)_X$  and $\pi_1(Y)$, but in general it is only right exact  $\pi_1(X)$.
If $Y$ is a curve standard arguments (see for instance \cite[App.C]{Cam98}) show that after finite \'etale cover 
the sequence is exact in $\pi_1(X)$.
An important result due to Nori \cite[Lemma 1.5]{Nor83} says that for a
fibration with trivial orbifold divisor (cf. Definition
\ref{definitionorbifold}) the sequence is exact in $\pi_1(X)$.

In a compact K\"ahler manifold $X$, the subvarieties $Z$ having finite $\pi_1(Z)_X$ are actually (contained in) the fibres of a canonical fibration attached to $X$.
\begin{definition}\label{defigammareduction}\cite{Cam94,Kol93}
Let $X$ be a compact K\"ahler manifold; there exists a unique almost holomorphic fibration
$$\merom{\gamma_X}{X}{\Gamma(X)}$$
with the following property: if $Z$ is a subvariety through a very general point $x\in X$ with finite $\pi_1(Z)_X$, then $Z$ is contained in the fibre through $x$.\\
This fibration is called the $\gamma$-reduction of $X$ (Shafarevich map in the terminology of \cite{Kol93}) and the $\gamma$-dimension of $X$ is defined by
$$\gamma\mathrm{dim}(X):=\mathrm{dim}\left(\Gamma(X)\right).$$
The manifold $X$ is said to have \emph{generically large fundamental group} if its $\gamma$-dimension is maximal, $i.e.$ when $\gamma\mathrm{dim}(X)=\mathrm{dim}(X)$.
\end{definition}

\subsection{Geometric orbifolds and the Kobayashi-Ochiai theorem} \label{subsectionorbifolds}

Let us recall some basic definitions on geometric orbifolds introduced in \cite{Cam04b}.
They are pairs $(X,\Delta)$ where $X$ is a complex manifold and $\Delta$ a Weil $\Q$-divisor; they appear naturally as bases of fibrations to describe their multiple fibres.

Let \holom{\varphi}{X}{Y} be a fibration between compact K\"ahler manifolds and consider $\vert\Delta\vert\subset Y$ the union of the codimension one components of the $\varphi$-singular locus. If $D\subset\vert\Delta\vert$ is one of these, consider the divisor $\varphi^*(D)$ on $X$ and let us write it in the following form:
$$\varphi^*(D)=\sum_j m_jD_j+R,$$
where $D_j$ is mapped onto $D$ and $\varphi(R)$ has codimension at least 2 in $Y$.
\begin{definition} \label{definitionorbifold}
The integer
$$m(\varphi,D)=\mathrm{inf}_j(m_j)$$
is called the multiplicity of $\varphi$ above $D$ and we can consider the $\Q$-divisor
$$\Delta=\sum_{D\subset \vert\Delta\vert}(1-\frac{1}{m(\varphi,D)})D.$$
The pair $(Y,\Delta)$ is called the \emph{orbifold base} of $\varphi$.
\end{definition}
The canonical bundle of such a pair $(Y,\Delta)$ is defined as $K_Y+\Delta$ and, being a $\Q$-divisor, it has a Kodaira dimension denoted by $\kappa(Y,\Delta)$; as in the absolute case, an orbifold is said to be of general type if $\kappa(Y,\Delta)=\dimension{Y}>0$.
\begin{definition} \label{definitiongeneraltype}
A fibration \holom{\varphi}{X}{Y} is said to be of general type if $(Y',\Delta')$ is of general type for some fibration \holom{\varphi'}{X'}{Y'} birationally equivalent\footnote{Actually the Kodaira dimension of the orbifold base $(Y,\Delta)$ is \emph{not} a birational invariant; see \cite[p.512-513]{Cam04b}} to $\varphi$.
\end{definition}
For our purpose, the notion of fibration of general type will be relevant through the use of the generalised Kobayashi-Ochiai Theorem \ref{KO} below; from the viewpoint developed in \cite{Cam04b}, it is a central notion which determines a class of manifolds.

\begin{definition} \label{definitionspecial}
A compact K\"ahler manifold $X$ is said to be \emph{special} if it has no general type fibration.
\end{definition}

Examples of special manifolds are given by rationally connected manifolds and manifolds with zero Kodaira dimension. In fact
one expects that a special manifold can be obtained as a tower of fibrations whose fibres should be rationally connected or have $\kappa=0$ (in the orbifold sense). This gives a strong support for the following conjecture.
\begin{conjecture} (Abelianity conjecture)\label{abelconj}
The fundamental group of a special manifold is almost abelian.
\end{conjecture}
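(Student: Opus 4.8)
The natural strategy is to reduce the Abelianity Conjecture to the conjectural \emph{structure theory} of special manifolds and then to analyse how fundamental groups behave along the resulting tower of fibrations. The starting point is the expected characterisation of specialness sketched before Definition~\ref{definitionspecial}: a special manifold $X$ should admit, after a finite \'etale cover and suitable birational modifications, a tower of orbifold fibrations
\[
X = X_0 \dashrightarrow X_1 \dashrightarrow \cdots \dashrightarrow X_r = \{\mathrm{pt}\}
\]
in which the orbifold fibre of each step is either rationally connected or of Kodaira dimension zero (in the orbifold sense of Definition~\ref{definitionorbifold}). I would construct such a tower by alternating the rational quotient and the Iitaka fibration of the orbifold canonical bundle: specialness of $X$ (no general type fibration, Definition~\ref{definitiongeneraltype}) forces each successive orbifold base to have Kodaira dimension in $\{-\infty,0\}$, while the general fibres remain special of strictly smaller dimension, so the construction terminates by induction on $\dimension{X}$.

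The next step is to settle the two \textbf{building blocks}. For a rationally connected fibre the (orbifold) fundamental group is trivial, so this case contributes nothing. The delicate block is the orbifold with $\kappa(F,\Delta_F)=0$: here I would appeal to an orbifold Beauville--Bogomolov decomposition, which after finite \'etale cover should split such an orbifold birationally into an abelian variety and simply connected building pieces of Calabi--Yau or irreducible holomorphic symplectic type with \emph{finite} orbifold fundamental group. Granting this, the orbifold $\pi_1$ of a $\kappa=0$ block is almost abelian, its abelian part coming exactly from the Albanese torus.

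With the blocks understood, I would assemble the global statement using the orbifold refinement of Nori's theorem (\cite{Nor83}, cf.\ the discussion after Definition~\ref{definitionorbifold}): because each step has controlled orbifold divisor, one obtains exact sequences
\[
1 \lra \pi_1(F_i)\lra \pi_1(X_i)\lra \pi_1(X_{i+1})\lra 1 .
\]
The rationally connected steps induce isomorphisms up to finite index, while each $\kappa=0$ step presents $\pi_1(X_i)$ as an extension of an almost abelian group by an almost abelian group.

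The \textbf{main obstacle} is precisely that such an extension need not itself be almost abelian: one must control the monodromy, that is, the outer action of $\pi_1(X_{i+1})$ on $\pi_1(F_i)$. The crucial geometric input I would exploit is that a family of polarised $\kappa=0$ manifolds over a \emph{special} base must be virtually isotrivial, since the moduli of such varieties is Brody hyperbolic (its base spaces being of log general type) and therefore admits no nonconstant map from a special orbifold; this should force the monodromy image to become finite after an \'etale cover, collapsing the extension to a genuinely almost abelian group. Carrying this out rigorously requires, at two separate points, results that are currently open --- the orbifold minimal model program and orbifold abundance underpinning the structure theorem, and the orbifold Bogomolov decomposition together with the isotriviality statement underpinning the $\kappa=0$ analysis --- which is why Conjecture~\ref{abelconj} remains open even in dimension three.
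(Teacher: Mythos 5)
The statement you are addressing is Conjecture~\ref{abelconj}: the paper does not prove it, states it explicitly as an open problem (even in dimension three), and only uses it as motivation. So there is no proof in the paper to compare yours against, and what you have written is, by your own admission in the final sentence, not a proof either: it is conditional on the orbifold minimal model program and abundance (to produce the tower), on an orbifold Beauville--Bogomolov decomposition (to handle the $\kappa=0$ blocks), and on an isotriviality statement for families over special bases. Your outline does faithfully reflect the heuristic the authors themselves give just before the conjecture (``one expects that a special manifold can be obtained as a tower of fibrations whose fibres should be rationally connected or have $\kappa=0$''), so as a description of \emph{why} the conjecture is believed, it is reasonable.

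There is, however, one step that is genuinely wrong rather than merely conditional. You claim that finiteness of the monodromy ``collapses the extension to a genuinely almost abelian group.'' This is false as a matter of group theory: an extension
\[
1 \lra A \lra G \lra B \lra 1
\]
with $A$ and $B$ finitely generated abelian and \emph{trivial} outer action can still fail to be almost abelian --- the integral Heisenberg group is a central extension of $\Z^{\oplus 2}$ by $\Z$ and is nilpotent of class two, hence not almost abelian. This is exactly the danger the paper's list of non-K\"ahler examples (nilmanifolds $G/\Gamma$) is pointing at: such extensions do occur for complex manifolds, and they are excluded only because those manifolds are not K\"ahler. Any serious attack on the conjecture must therefore use the K\"ahler hypothesis to kill the extension class (not just the monodromy), e.g.\ via restrictions on K\"ahler groups or an orbifold Albanese argument, and your outline never invokes K\"ahlerness at this point. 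So even granting every open conjecture you cite, the assembly step of your argument does not go through as written.
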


Special and general type manifolds appear as the building blocks of K\"ahler geometry: each compact K\"ahler manifold can be divided into a special and a general type part.

\begin{theorem}\cite[section 3]{Cam04b}\label{thmcore}
Let $X$ be a compact K\"ahler manifold; there exists a unique almost holomorphic fibration
$$\merom{c_X}{X}{C(X)}$$
which is at the same time special (i.e. the general fibres are special manifolds) and of general type. This fibration is called the \emph{core} of $X$.
\end{theorem}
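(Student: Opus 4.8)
The plan is to realise $c_X$ as the \emph{maximal} fibration of general type on $X$, and then to extract both the special fibres and the uniqueness from this maximality. First I would fix the domination order on almost holomorphic fibrations: write $f \succeq g$ when $g$ factors (birationally) through $f$, so that the general $f$-fibres are contained in the general $g$-fibres; thus a ``finer'' fibration dominates a ``coarser'' one. With the convention that the constant map $X \dashrightarrow \mathrm{pt}$ counts as being of general type, the two extremes are already understood: $X$ is special exactly when its core is the constant map, while the identity is of general type (so $C(X)=X$) exactly when $X$ is of general type in the usual sense. The core should interpolate, pushing all of the general type behaviour into the base $C(X)$ and leaving special fibres.

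The technical heart is a \emph{join lemma}: if $f \colon X \dashrightarrow Y$ and $g \colon X \dashrightarrow Z$ are both of general type (Definition \ref{definitiongeneraltype}), then so is their join $f \vee g \colon X \dashrightarrow W$, where $W$ is the image of $X$ in $Y \times Z$ and a general fibre is the connected component of the intersection of the relevant $f$- and $g$-fibres (one checks $f \vee g$ may be taken almost holomorphic). Granting this, the construction is formal. The base dimension of any fibration is bounded by $\dim X$, so among all general type fibrations there is one of maximal base dimension; the join lemma makes it unique at that dimension, since two distinct such fibrations would admit a join of strictly larger base that is again of general type. Taking successive joins --- a finite process by the dimension bound --- produces a distinguished general type fibration $c_X \colon X \dashrightarrow C(X)$ dominating every general type fibration on $X$.

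It remains to see that $c_X$ has special general fibre $F$ and that it is the only general type fibration with special fibres. If $F$ were not special it would carry a general type fibration $F \dashrightarrow V$ with $\dim V>0$; letting $F$ vary over $C(X)$ this glues (via a relative Hilbert scheme / relative core construction) to a fibration of $X$ strictly refining $c_X$, and the additivity estimate behind the join lemma shows it is again of general type --- contradicting the maximality of $\dim C(X)$. Hence $F$ is special (Definition \ref{definitionspecial}). For uniqueness, let $c_1,c_2$ both be of general type with special fibres and restrict $c_2$ to a general fibre $F_1$ of $c_1$; by the restriction principle (orbifold general type survives restriction to a general fibre, the ``easy addition'' direction) this restriction would be of general type onto its image were that image positive-dimensional, which is impossible as $F_1$ is special. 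Thus $c_2(F_1)$ is a point, $c_2$ factors through $c_1$, and by symmetry $c_1=c_2$.

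The main obstacle is squarely the join lemma, that is, the additivity of the orbifold Kodaira dimension $\kappa(\,\cdot\,,\Delta)$ along the two projections $p \colon W \to Y$ and $q \colon W \to Z$: one must show that $K_W + \Delta_{f \vee g}$ dominates a combination of $p^*(K_Y+\Delta_f)$ and $q^*(K_Z+\Delta_g)$, each big in complementary directions, so that the sum is big on $W$. This is an orbifold refinement of the Fujita--Kawamata $C_{n,m}$ additivity and rests on the positivity theory of direct images in the orbifold category; it is precisely the point where the full strength of \cite{Cam04b} is needed. By contrast the restriction direction used for uniqueness, and the purely order-theoretic bookkeeping of joins, are comparatively soft.
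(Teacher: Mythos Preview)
The paper does not contain a proof of this theorem at all: it is quoted as a black box from \cite[section 3]{Cam04b}, and the authors use only the \emph{statement} (existence and uniqueness of the core) together with the orbifold Kobayashi--Ochiai theorem in the subsequent corollaries. So there is nothing in the present paper to compare your proposal against.

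That said, your sketch is a faithful outline of Campana's own argument in \cite{Cam04b}. The core is indeed characterised there as the fibration of general type with maximal base dimension, and the key technical input is precisely the additivity/stability result you call the ``join lemma'' (in \cite{Cam04b} this is packaged via the orbifold $C_{n,m}^{\rm orb}$ inequality and the positivity of orbifold direct images). Your identification of this as the hard step, and of the restriction direction as the soft one, is accurate. One small caution: the ``gluing'' step --- spreading a general type fibration on a general fibre $F$ to a relative fibration over $C(X)$ --- is not entirely formal and in \cite{Cam04b} is handled by working with neat birational models and relative Iitaka fibrations; you should be explicit that you are invoking this rather than a naive Hilbert scheme argument, since the orbifold divisor must be tracked through the birational modifications.
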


We come now to the statement of the Kobayashi-Ochiai theorem (in the orbifold setting) and to its consequences on the geometry of manifolds satisfying (H).
The following easy observation can be seen as the curve case of the Kobayashi-Ochiai theorem but for the convenience of the
reader we state and prove it separately.
\begin{lemma}\label{maptocurve}
Let $X$ be a compact K\"ahler manifold satisfying (H). Let $\holom{\varphi}{X}{C}$ be a surjective map onto
a smooth curve. Then $C$ has genus at most one.
\end{lemma}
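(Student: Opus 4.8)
The plan is to argue by contradiction, assuming $C$ has genus $g(C) \geq 2$, and to exploit the fact that such a curve is uniformised by a \emph{bounded} domain. The point is that a nonconstant map to $C$ would produce a nonconstant bounded holomorphic function on \upX, which the compactification in (H) forbids.

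First I would recall that for $g(C) \geq 2$ the universal cover of $C$ is the unit disk \D, with covering map $p: \D \to C$. Since \upX is simply connected, the composition $\upX \xrightarrow{\pi} X \xrightarrow{\varphi} C$, where $\pi: \upX \to X$ is the universal covering map, lifts to a holomorphic map $\tilde\varphi: \upX \to \D$ satisfying $p \circ \tilde\varphi = \varphi \circ \pi$; this is simply the lifting criterion for maps out of a simply connected space. Composing with the inclusion $\D \subset \C$, I regard $\tilde\varphi$ as a holomorphic function on \upX whose modulus is bounded by $1$.

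By hypothesis (H) I may write $\upX = \barX \setminus Z$ as the complement of a proper closed analytic subset $Z$ in the compact manifold \barX. The key step is then to extend $\tilde\varphi$ across $Z$: by the Riemann extension theorem a locally bounded holomorphic function defined off an analytic subset extends holomorphically across it (when $\codim Z \geq 2$ boundedness is not even needed, and when $Z$ is a divisor this is the removable singularity theorem for bounded functions). Thus $\tilde\varphi$ extends to a holomorphic function $\bar\varphi: \barX \to \C$. Since \barX is compact and connected (\upX being dense), the maximum principle forces $\bar\varphi$, and hence $\tilde\varphi$, to be constant. But then $\varphi \circ \pi$ is constant, and as $\pi$ is surjective so is $\varphi$ constant, contradicting that $\varphi$ maps onto the curve $C$.

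This shows $g(C) \leq 1$. The argument is genuinely elementary; the only step requiring any care is the extension across the boundary $Z$, which is precisely where the compactness of \barX in (H) is used, and one should note that the remaining cases $g(C) \in \{0,1\}$ are harmless because their uniformising spaces $\PP^1$ and \C are unbounded, so no such contradiction is expected — consistently with the sharpness of the bound.
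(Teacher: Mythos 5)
Your argument is correct and is essentially the proof given in the paper: lift $\varphi\circ\pi_X$ to the unit disk covering $C$, obtain a bounded holomorphic function on $\upX$, extend it across the boundary analytic set by the Riemann removable singularity theorem, and conclude by compactness of $\barX$ that it is constant, contradicting surjectivity. The paper states this more tersely but the route is identical; your added care about the extension step is the only (welcome) elaboration.
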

\begin{proof}
If the genus of $C$ is at least 2, its universal cover is the unit disk and $\holom{\varphi}{X}{C}$ induces
a bounded holomorphic function on $\upX$. This function extends to $\barX$  and must then be constant (compactness of $\barX$), contradicting the surjectivity of $\varphi$.
\end{proof}

In the general case, the Kobayashi-Ochiai theorem can be seen as a description of the singularities of meromorphic map to general type varieties: such maps can not have essential singularities.
\begin{theorem} \cite[Thm.2]{KO75}\cite[Thm.8.2]{Cam04b}\label{KO}
Let $X$ be a compact K\"ahler manifold, $V$ be a complex manifold, and let $B \subset V$ be a proper closed analytic subset. Let \merom{h}{V \setminus B}{X} be a nondegenerate meromorphic map, \emph{i.e.} such that the tangent map
$T_{V \setminus B} \rightarrow T_X$  is surjective at at least one point $v \in V \setminus B$. Let us finally consider \merom{g}{X}{Y} a general type fibration defined on $X$. Then $f=g\circ h$ extends to a meromorphic map $V \dashrightarrow Y$.
\end{theorem}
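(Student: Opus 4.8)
The plan is to reduce the statement to the classical extension property enjoyed by varieties of general type, and then to transport that property through the orbifold structure of the fibration $g$. Since extending a meromorphic map is a local problem along $B$, I would first replace $V$ by a small polydisc transversal to $B$ at a general smooth point, so that after shrinking we may assume $V=\Delta\times\Delta^{n-1}$ and $B=\{0\}\times\Delta^{n-1}$, hence $V\setminus B=\Delta^*\times\Delta^{n-1}$. Extending $f=g\circ h$ meromorphically across such a generic $B$ is enough: the non-generic part of $B$ (where $B$ is singular, or where $h$ has indeterminacy) has codimension at least two in $V$, and a meromorphic map to a compact target always extends across analytic sets of codimension $\ge 2$.

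Next I would exploit that $g\colon X\dashrightarrow Y$ is of general type. By Definition \ref{definitiongeneraltype} there is a birational model $g'\colon X'\dashrightarrow Y'$ whose orbifold base $(Y',\Delta')$ satisfies $\kappa(Y',\Delta')=\dim Y'>0$. Hence for $m$ divisible enough the sections of $\lfloor m(K_{Y'}+\Delta')\rfloor$ separate points, i.e. the orbifold $m$-canonical map $\Phi_m\colon Y'\dashrightarrow\PP^N$ is birational onto its image. The crucial feature of $\Delta'$ is that its multiplicities are tailored so that the $g'$-pullback of such a section is a genuine holomorphic $m$-fold form on $X'$ (a section of a subsheaf of $(\Omega^{\dim Y'}_{X'})^{\otimes m}$): the fibre multiplicities recorded in $\Delta'$ cancel exactly the ramification of $g'$ along the multiple fibres. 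Pulling these back further by the nondegenerate map $h$ — whose surjective differential at one point guarantees that the images do not all vanish — produces meromorphic $m$-fold forms $f^*\sigma_0,\dots,f^*\sigma_N$ on $\Delta^*\times\Delta^{n-1}$ that are not identically zero.

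The analytic heart of the argument, and the step I expect to be the main obstacle, is the growth estimate for these pulled-back forms near $B$. Because $(Y',\Delta')$ is of general type, its Kobayashi--Eisenman orbifold volume form is bounded below by the semi-positive volume form attached to a pluricanonical section, so $Y'$ is measure hyperbolic in the orbifold sense. The volume-decreasing property then forces $f^*(\text{orbifold pluricanonical measure})$ to be dominated by the intrinsic measure of $\Delta^*\times\Delta^{n-1}$, which near $z_1=0$ grows only like the Poincar\'e volume $|z_1|^{-2}(\log|z_1|)^{-2}$. This is precisely an Ahlfors--Schwarz estimate, and it shows that each $f^*\sigma_i$ has at most logarithmic growth along $B$ — mild enough that the ratios $f^*\sigma_i/f^*\sigma_0$ extend meromorphically across $B$. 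Arranging the orbifold multiplicities so that the pulled-back forms are genuinely holomorphic objects \emph{before} one applies the Schwarz lemma, and controlling the estimate uniformly as the transversal disc moves along $B$, is the delicate point.

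Finally I would conclude by descent through the birational map $\Phi_m$. The ratios $f^*\sigma_i/f^*\sigma_0$ are exactly the coordinates of $\Phi_m\circ f$, so the previous step shows that $\Phi_m\circ f$ extends to a meromorphic map $V\dashrightarrow\PP^N$. Since $\Phi_m$ is birational onto its image, $f=\Phi_m^{-1}\circ(\Phi_m\circ f)$ extends to a meromorphic map $V\dashrightarrow Y'$, and composing back with the birational modification $Y'\dashrightarrow Y$ yields the desired meromorphic extension of $f=g\circ h$ across $B$.
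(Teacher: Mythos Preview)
The paper does not prove Theorem~\ref{KO}: it is quoted from the literature (Kobayashi--Ochiai \cite{KO75} for the classical case, Campana \cite[Thm.~8.2]{Cam04b} for the orbifold generalisation) and used as a black box. There is therefore no proof in the paper to compare your proposal against.

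That said, your outline is essentially the argument of the cited references. The original Kobayashi--Ochiai proof proceeds exactly via measure hyperbolicity: one pulls back pluricanonical forms on the general-type target, uses the Ahlfors--Schwarz volume-decreasing property to bound their growth near the boundary by the Poincar\'e volume, and deduces meromorphic extension of the ratios. Campana's orbifold version \cite[\S8]{Cam04b} adds precisely the observation you isolate --- that the multiplicities in $\Delta'$ are chosen so that orbifold pluricanonical sections pull back to honest pluri-forms on $X'$ --- and then runs the same Schwarz-lemma machinery. Your identification of the growth estimate as the analytic crux, and of the orbifold bookkeeping as the place requiring care, matches the structure of Campana's proof. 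One small caveat: the extension of a meromorphic map across a codimension~$\ge 2$ set is not automatic for arbitrary compact targets, but here $Y'$ is (orbifold) general type, hence Moishezon, so the reduction to Hartogs-type extension for the coordinate functions of $\Phi_m$ goes through.
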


In our situation the last statement can be used to describe the topology of general type fibrations defined on $X$.
\begin{corollary}\label{corollary fundamental groups}
Let $X$ be a compact K\"ahler manifold satisfying (H).
Let $\holom{\varphi}{X}{Y}$ be a general type fibration, and denote by $F$ general $\varphi$-fibre. Then the natural morphism 
$$\pi_1(F)\lra\pi_1(X)$$
has finite index image. In particular, the fundamental group of $Y$ is finite.
\end{corollary}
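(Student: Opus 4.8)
The plan is to use the compactification of the universal cover to turn the general type fibration $\varphi$ into a meromorphic map on the \emph{compact} manifold $\barX$, and then to read off the index of $\im(\pi_1(F)\to\pi_1(X))$ from the number of connected components of the preimage of a general fibre in $\upX$.

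First I would apply Theorem \ref{KO}. Let $p:\upX\to X$ be the universal covering and take $V=\barX$, $B=\barX\setminus\upX$; by Hypothesis (H) this $B$ is a proper closed analytic subset of $\barX$. Viewed as a meromorphic map $V\setminus B=\upX\to X$, the covering $p$ is a local biholomorphism, so its tangent map is everywhere an isomorphism and $p$ is nondegenerate in the sense of Theorem \ref{KO}. Since $\varphi:X\to Y$ is a general type fibration, Theorem \ref{KO} then guarantees that the holomorphic map $\varphi\circ p:\upX\to Y$ extends to a meromorphic map $\Phi:\barX\dashrightarrow Y$.

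Next I would translate the desired conclusion into a statement about connected components. For a general $y\in Y$ the fibre $F=\fibre{\varphi}{y}$ is smooth and connected, and pulling back the universal covering along the inclusion $F\hookrightarrow X$ identifies $p^{-1}(F)$ with the covering of $F$ attached to $\ker(\pi_1(F)\to\pi_1(X))$. Its connected components are therefore in bijection with the cosets of $\im(\pi_1(F)\to\pi_1(X))$ in $\pi_1(X)$; in particular this image has finite index precisely when $p^{-1}(F)$ has finitely many connected components. It thus suffices to bound the number of components of $p^{-1}(F)$, and this is exactly where the compactness of $\barX$ enters: resolving the indeterminacy of $\Phi$ by passing to its graph $\Gamma\subset\barX\times Y$, the fibre over a general $y$ is a compact analytic subset of $\barX$, hence has finitely many irreducible, and a fortiori finitely many connected, components. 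For general $y$ this fibre meets $\upX$ exactly in $(\varphi\circ p)^{-1}(y)=p^{-1}(F)$, so $p^{-1}(F)$ has finitely many components and $\im(\pi_1(F)\to\pi_1(X))$ has finite index. I expect the delicate point to be precisely this last step: one must verify that intersecting the compact fibre of $\Phi$ with the Zariski open set $\upX$ and discarding the codimension $\geq 2$ indeterminacy locus does not create infinitely many components; here one invokes that an irreducible analytic set remains connected after removing a proper closed analytic subset.

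Finally, for the assertion on $\pi_1(Y)$ I would use the right exact homotopy sequence $\pi_1(F)_X\to\pi_1(X)\to\pi_1(Y)\to 1$ recalled in Section \ref{subsectionorbifolds}'s preliminaries. Since $F$ is smooth one has $\pi_1(F)_X=\im(\pi_1(F)\to\pi_1(X))$, so the kernel of $\pi_1(X)\to\pi_1(Y)$ contains a finite-index subgroup and hence is itself of finite index; as $\pi_1(X)\to\pi_1(Y)$ is surjective, $\pi_1(Y)$ is a quotient of $\pi_1(X)$ by a finite-index normal subgroup and is therefore finite.
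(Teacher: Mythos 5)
Your proposal is correct and takes essentially the same route as the paper: apply the orbifold Kobayashi--Ochiai Theorem \ref{KO} to extend $\varphi\circ\pi_X$ to $\barX$ (the paper passes to a bimeromorphic model on which the extension is holomorphic, rather than to the graph, but this is the same device), bound the number of connected components of $\pi_X^{-1}(F)$ by the finitely many components of the corresponding compact fibre, and deduce the finiteness of $\pi_1(Y)$ from the surjection $\pi_1(X)/\pi_1(F)_X\twoheadrightarrow\pi_1(Y)$. The subtlety you flag --- that restricting the compact fibre to $\upX$ does not create infinitely many components --- is resolved exactly as you indicate, by applying connectedness after removal of a proper analytic subset to each irreducible component of the fibre.
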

\begin{proof}
Consider the composite morphism $\upX\lra Y$ and its holomorphic extension $\overline{\varphi}:\barX\lra Y$ given by Theorem \ref{KO} (up to replacing $\barX$ by a suitable bimeromorphic model, we can always assume that the extension is holomorphic). Then, every connected component of $\pi_X^{-1}(F)$ is contained in a unique connected component of the corresponding fibre of $\overline{\varphi}$. By compactness there are finitely many such components and this number is exactly the index of the subgroup $\pi_1(F)_X$. Since we always have a surjection
$$\pi_1(X)/\pi_1(F)_X\twoheadrightarrow\pi_1(Y),$$
it gives the finiteness of $\pi_1(Y)$.
\end{proof}

An immediate consequence of this statement is that a manifold satisfying (H) can never be of general type.
Moreover it yields first general results on the structure of the manifolds satisfying (H).

\begin{corollary}\label{core}
Let $X$ be a compact K\"ahler manifold satisfying (H).
Let \merom{c_X}{X}{C(X)} be the core fibration (cf. Theorem \ref{thmcore}) and denote by $F$ the general fibre. The group $\pi_1(F)_X$ has finite index in $\pi_1(X)$ and $\pi_1(C(X))$ is finite. In particular, if 
$$
\dim C(X) \in \{\dim(X)-1,\dim(X)-2\},
$$ 
 its fundamental group is then almost abelian.
\end{corollary}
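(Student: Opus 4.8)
The plan is to apply the machinery already assembled — namely Corollary \ref{corollary fundamental groups} — to the core fibration, and then to read off almost-abelianity from the geometry of the fibre in the two extremal cases.

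First I would note that, by Theorem \ref{thmcore}, the core $\merom{c_X}{X}{C(X)}$ is a general type fibration, so Corollary \ref{corollary fundamental groups} is tailor-made for it. The only point requiring care is that $c_X$ is merely almost holomorphic. This is harmless: the proof of Corollary \ref{corollary fundamental groups} only uses that the composite $\upX \to X \dashrightarrow C(X)$ extends across $\barX$, and Theorem \ref{KO} supplies this extension whether or not $c_X$ is holomorphic on $X$ (apply it with $V=\barX$, $B=\barX\setminus\upX$ and $h=\pi_X$, which is nondegenerate since it is a local biholomorphism). Equivalently, one may replace $X$ by a bimeromorphic model on which $c_X$ becomes a genuine fibration, changing neither $\pi_1(X)$ nor a general fibre $F$. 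Either way Corollary \ref{corollary fundamental groups} yields at once that $\pi_1(F)_X$ has finite index in $\pi_1(X)$ and that $\pi_1(C(X))$ is finite, which are the first two assertions.

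For the ``in particular'' statement I would argue by the dimension of the general fibre, $\dim F = \dim(X) - \dim C(X) \in \{1,2\}$, using that $F$ is \emph{special} (Theorem \ref{thmcore}). If $\dim F = 1$, then $F$ is a smooth curve that is special, hence of genus at most one — a curve of genus $\geq 2$ is of general type and so can never be special — and thus $\pi_1(F)$ is abelian. If $\dim F = 2$, then $F$ is a special surface, and here I would invoke the fact that the abelianity conjecture (Conjecture \ref{abelconj}) is a theorem in dimension at most two (Campana, \cite{Cam04b}), so that $\pi_1(F)$ is almost abelian. In both cases $\pi_1(F)$, hence its homomorphic image $\pi_1(F)_X$, is almost abelian; since $\pi_1(F)_X$ sits with finite index inside $\pi_1(X)$ by the first part, and a finite-index overgroup of an almost abelian group is again almost abelian, $\pi_1(X)$ is almost abelian.

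The two elementary group-theoretic facts used at the end — that a homomorphic image and a finite-index overgroup of an almost abelian group stay almost abelian — are immediate from the definition, so the genuine input is the surface case of the abelianity conjecture. This is exactly the hard point, and exactly why the statement stops at $\dim C(X)=\dim(X)-2$: in higher codimension the core fibre would be a special threefold or larger, where the almost-abelianity of $\pi_1(F)$ is precisely the open Conjecture \ref{abelconj} rather than an available theorem. Everything else is a formal consequence of the exact sequence of fundamental groups recorded earlier.
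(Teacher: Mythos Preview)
Your argument is correct and mirrors the paper's own proof: apply Corollary \ref{corollary fundamental groups} to the general type core fibration to obtain the first two claims, then use that special curves and surfaces have almost abelian fundamental groups (the paper cites \cite[Thm.3.33]{Cam04}) together with the finite-index statement to conclude. The paper compresses this into a single sentence, while you have spelled out the handling of the almost-holomorphic core and the elementary group theory, but the route is the same.
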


Special curves and surfaces have indeed almost abelian fundamental groups \cite[Thm.3.33]{Cam04}. 
We can remark here that the abelianity conjecture \ref{abelconj} applied to the fibres of $c_X$ implies that the fundamental group of {\em any} manifold satisfying (H) is almost abelian. In particular (up to finite \'etale cover) its Albanese map should be non trivial. The structure of this map can be partly described using the generalised Kobayashi-Ochiai theorem: 

\begin{corollary}\label{corollarysurjectivityAlbanese}
Let $X$ be a compact K\"ahler manifold satisfying (H). The Albanese map of $\holom{\alpha}{X}{\mathrm{Alb}(X)}$ 
is surjective with connected fibres and 
with trivial orbifold divisor. In particular the sequence
$$1 \rightarrow \pi_1(F)_X \lra\pi_1(X)\lra\pi_1(Y)\lra1$$
is exact.
\end{corollary}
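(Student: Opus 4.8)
The plan is to derive all three properties from the single principle packaged in Corollary \ref{corollary fundamental groups}: a manifold satisfying (H) carries no general type fibration onto a positive-dimensional base with infinite fundamental group. Each assertion, were it to fail, would produce exactly such a forbidden fibration, the offending bases always being (quotients of, or general-type subvarieties of) complex tori, which have infinite fundamental group. I begin with surjectivity. Write $A=\mathrm{Alb}(X)$ and set $Y=\alpha(X)\subseteq A$, which generates $A$ by the universal property. If $Y\subsetneq A$, I would invoke Ueno's structure theorem for subvarieties of a complex torus: there is a subtorus $B\subseteq A$ so that the projection $p:A\to A/B$ sends $Y$ onto a general type subvariety $Y'=p(Y)$ with $\dim Y'=\kappa(Y)$, the fibres of $Y\to Y'$ being translates of $B$. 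Since $Y$ generates $A$ but is proper, it is not a single coset of a subtorus, so $\dim Y'\geq 1$; taking a Stein factorisation of $X\to Y'$ yields a general type fibration onto a positive-dimensional general type subvariety of $A/B$, whose Albanese surjects onto $A/B$ and is therefore infinite. This contradicts Corollary \ref{corollary fundamental groups}, so $\alpha$ is surjective.

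The main obstacle is connectedness of the fibres, and it is the only point that genuinely needs (H): a smooth branched double cover of an abelian surface has surjective Albanese map with two-sheeted fibres, so the torus structure alone does not suffice. I would take the Stein factorisation $X\xrightarrow{\alpha'} Z\xrightarrow{\sigma} A$ with $\alpha'$ of connected fibres and $\sigma$ finite of degree $d$, and pass to a resolution $\tilde Z$. Comparing holomorphic one-forms along the pullback injections $H^0(A,\Omega^1)\hookrightarrow H^0(\tilde Z,\Omega^1)\hookrightarrow H^0(X,\Omega^1)$, whose composite is the Albanese isomorphism, forces $q(\tilde Z)=q(X)$; hence $\tilde Z$ has maximal Albanese dimension with $\mathrm{Alb}(\tilde Z)=A$ and Albanese map birational to $\sigma$. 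If $\kappa(\tilde Z)>0$, the Iitaka fibration of $\tilde Z$ is a general type fibration whose base again has maximal Albanese dimension, hence infinite fundamental group; pulling it back along $X\to\tilde Z$ produces a forbidden fibration, contradicting Corollary \ref{corollary fundamental groups}. Thus $\kappa(\tilde Z)=0$, and Kawamata's theorem on varieties of maximal Albanese dimension with vanishing Kodaira dimension forces $\sigma$ to be birational, i.e. $d=1$. So $\alpha$ is a fibration.

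With $\alpha$ now an honest fibration, its orbifold base $(A,\Delta)$ (Definition \ref{definitionorbifold}) is well defined, and I claim $\Delta=0$. Suppose not. Since $K_A=0$ we have $K_A+\Delta=\Delta$, a nonzero effective divisor on an abelian variety; its connected stabiliser $T=\{a\in A:\ t_a^*\Delta=\Delta\}$ is then a proper subtorus, and $\Delta$ descends as $\Delta=p^*\Delta'$ under $p:A\to A':=A/T$, with $(A',\Delta')$ of general type and $\dim A'\geq 1$. The composition $X\to A\to A'$ is, after Stein factorisation, a general type fibration onto a base finite over the positive-dimensional torus $A'$, whose fundamental group is infinite. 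This again contradicts Corollary \ref{corollary fundamental groups}, so $\Delta=0$.

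Finally, since $\alpha$ is surjective with connected (general) fibre $F$ and trivial orbifold divisor, Nori's lemma \cite{Nor83} applies and yields the exactness of
$$1 \rightarrow \pi_1(F)_X \lra \pi_1(X) \lra \pi_1(A) \lra 1,$$
which completes the proof. I expect the first two paragraphs to be routine once Ueno's and Kawamata's theorems are granted; the genuine difficulty is isolating why (H) rules out the jump in the number of sheets, which the maximal-Albanese-dimension dichotomy in the connectedness step resolves.
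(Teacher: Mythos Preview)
Your argument is correct and follows essentially the same route as the paper: Ueno's structure theorem for surjectivity, Kawamata's dichotomy on varieties of maximal Albanese dimension for connectedness, and the reduction of an effective divisor on a torus to an ample divisor on a quotient torus for the vanishing of $\Delta$. Two cosmetic points: there is no morphism $X\to\tilde Z$ (you should compose $X\to Z$ with the rational Iitaka map and resolve), and in the last step no Stein factorisation is needed since $X\to A\to A'$ already has connected fibres; you should also remark that the orbifold divisor of $X\to A'$ contains $\Delta'$ because $p$ is smooth with connected fibres, which is exactly the check the paper defers to \cite[Prop.~5.3]{Cam04b}.
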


\begin{proof}
If the Albanese map $\alpha_X$ is not surjective, its image $Y=\alpha(X)$ is a fibre bundle over a variety of general type $W$ which is also a subvariety of an abelian variety \cite[Thm.10.9, p.120]{Ue75}. Being embedded in a complex torus, the fundamental group of $W$ is infinite and this fact contradicts the previous corollary.

Consider now the Stein factorization
$$X\stackrel{\beta}{\lra}Y\stackrel{\gamma}{\lra}\mathrm{Alb}(X)$$
of the surjective map $\alpha$. Under the assumption $\kappa(Y)>0$, we can apply \cite[Thm. 23]{Kaw81} to obtain a fibration $Y\lra W$ onto a general type variety $W$, finite over a complex torus. We thus get a contradiction as above. The fact that $\kappa(Y)=0$ easily implies that $\alpha$ has connected fibres (by \cite[Thm. 22]{Kaw81} $\gamma$ is finite \'etale and the universal property of the Albanese map implies that $\gamma$ is an isomorphism).

To prove that the orbifold divisor $\Delta$ of $\alpha$ is trivial, we argue by contradiction and suppose that
$$\emptyset\neq\Delta\subset \mathrm{Alb}(X).$$
As an effective $\Q$-divisor on a complex torus, $\Delta$ is numerically equivalent to the pull-back of an ample $\Q$-divisor $D$ on a quotient torus $q:\mathrm{Alb}(X)\twoheadrightarrow A$ (\cite[Thm.5.1]{Deb99}). Since $q$ is submersive, the orbifold divisor of $q\circ\alpha$ has to contain $D$ and the composite map
$$q\circ\alpha:X\lra A$$
is then of general type (see \cite[proof of Prop.5.3]{Cam04b}). The contradiction is obtained as a final application of the Corollary \ref{corollary fundamental groups}.
\end{proof}

\subsection{Birational geometry of $X$.}

Since the difference between the hypothesis (H) and the situation of Nakayama's theorem  \ref{theoremnakayama}
is that we authorise the universal cover $\upX$ to contain positive-dimensional compact subvarieties, they will play
a central role in this paper. In this paragraph we study the restrictions imposed by (H) on the birational geometry. 
The following lemma is a generalisation of \cite[Lemma 2.1]{Cla10}.

\begin{lemma} \label{lemmadivisor}
Let $X$ be a compact K\"ahler manifold satisfying (H). Let $D \subset X$ be an integral divisor
such that $\pi_1(D)_X$ is finite. Then $D|_D$ is numerically trivial.
\end{lemma}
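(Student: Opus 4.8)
The plan is to use the finiteness of $\pi_1(D)_X$ to lift $D$ to infinitely many compact hypersurfaces inside the universal cover $\upX$, and then to exploit the fact that these all sit inside the \emph{compact} manifold $\barX$, whose cohomology is finite-dimensional. First I would set up the lifts. Let $\nu:\hat{D}\to D$ be the normalisation and $f:\hat{D}\to X$ the composite with the inclusion, so that by hypothesis $G:=\pi_1(D)_X=\mathrm{Im}(f_*)$ is finite. Pulling back the universal cover $\pi_X:\upX\to X$ along $f$ and taking a connected component yields a finite étale cover $\tilde{D}_0\to\hat{D}$ (of degree $|G|$, corresponding to $\ker f_*$), which is compact and carries a map $j:\tilde{D}_0\to\upX$ lifting $f$; its image is an irreducible component of $\pi_X^{-1}(D)$, a compact hypersurface $\tilde{D}_0\subset\upX\subset\barX$. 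Since $\pi_1(X)$ is infinite and $G$ is finite, the deck group permutes these components transitively with stabiliser $G$, producing infinitely many compact hypersurfaces $\tilde{D}_g$ ($g\in\pi_1(X)/G$), each finite over $D$ via $\pi_X$ and each a biholomorphic translate $g\cdot\tilde{D}_0$ of the others.

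Next I would reduce the statement to a single intersection number. Because $\pi_X$ is étale one has $\pi_X^*\sO_X(D)=\sO_{\upX}(\sum_g\tilde{D}_g)$, so restricting to a compact curve $\tilde{C}\subset\tilde{D}_0$ lying over a curve $C\subset D$ gives the identity $d'\,(D\cdot C)=\sum_g(\tilde{D}_g\cdot\tilde{C})$, where $d'=\deg(\tilde{C}\to C)$ and $\tilde{D}_g\cdot\tilde{C}:=\deg(\sO_{\upX}(\tilde{D}_g)|_{\tilde{C}})$. For a curve $C$ avoiding the non-normal locus of $D$, the lift $\tilde{C}$ meets only the component $\tilde{D}_0$, so this collapses to $D\cdot C=\tfrac{1}{d'}(\tilde{D}_0\cdot\tilde{C})$. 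Thus it suffices to prove $\tilde{D}_0\cdot\tilde{C}=0$ for such curves $\tilde{C}\subset\tilde{D}_0$.

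The core of the argument is then a finite-dimensionality trick in $H^2(\barX,\bR)$. Write $n=\dim X$, $v_g=[\tilde{D}_g]\in H^2(\barX,\bR)$ and $w_g=[g\cdot\tilde{C}]\in H^{2n-2}(\barX,\bR)$; the cup-product pairing $H^2\times H^{2n-2}\to H^{2n}=\bR$ computes intersection numbers of these compact cycles. As $g$ is a biholomorphism of $\upX$ with $g^*\sO_{\upX}(\tilde{D}_g)=\sO_{\upX}(\tilde{D}_0)$, one gets $\langle v_g,w_g\rangle=\tilde{D}_0\cdot\tilde{C}=:m$ for every $g$, while for $h\neq g$ the curve $g\cdot\tilde{C}\subset\tilde{D}_g$ is disjoint from $\tilde{D}_h$, whence $\langle v_h,w_g\rangle=0$. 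So $\langle v_h,w_g\rangle=m\,\delta_{hg}$. If $m\neq0$, then pairing any relation $\sum_i a_i v_{g_i}=0$ with $w_{g_j}$ forces $a_j m=0$, so the infinitely many classes $v_g$ would be linearly independent, contradicting $\dim_{\bR}H^2(\barX,\bR)<\infty$. Hence $m=0$, which is exactly what we need.

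The step I expect to be the main obstacle is the disjointness used above, namely that $g\cdot\tilde{C}\cap\tilde{D}_h=\emptyset$ for $h\neq g$. Distinct components of $\pi_X^{-1}(D)$ can meet only over the non-normal locus of $D$, which has codimension at least one in $D$; the argument as stated therefore controls directly only those curves $\tilde{C}$ not contained in the resulting (codimension $\geq 1$) bad locus $B\subset\tilde{D}_0$. Dealing with the remaining curves — in the threefold case the finitely many one-dimensional components of $B$ — is the delicate point. Here I would work systematically on the normalisation $\tilde{D}_0$, where the gluing between copies is confined to codimension two, and promote the vanishing $\sO_{\tilde{D}_0}(\tilde{D}_0)\cdot\tilde{C}=0$ from curves avoiding $B$ to all curves by a continuity argument, using that the curves avoiding $B$ already span the relevant cone of curve classes. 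Granting this, $\sO_X(D)|_D$ pairs to zero with every curve of $D$, i.e. $D|_D$ is numerically trivial.
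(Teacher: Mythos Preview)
Your argument coincides with the paper's: argue by contradiction from a curve $C\subset D$ with $D\cdot C\neq 0$, lift $D$ to infinitely many compact divisors $\tilde D_g$ in $\upX\subset\barX$, produce curves $C_g\subset\tilde D_g$ with $\tilde D_h\cdot C_g=m\,\delta_{hg}$ and $m\neq 0$, and conclude that the classes $[\tilde D_g]$ are linearly independent in $NS(\barX)$ (you phrase this via $H^2(\barX,\bR)$, which amounts to the same thing). The paper simply writes that $\pi_X^{-1}(D)$ is ``an infinite disjoint union of finite \'etale covers of $D$'' and proceeds; this is immediate when $D$ is normal, and in every application of the lemma in the paper the divisor $D$ is in fact smooth.

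You are right to flag the non-normal case: distinct irreducible components of $\pi_X^{-1}(D)$ can indeed meet over the non-unibranch locus of $D$, so the off-diagonal vanishing is not automatic there. However, your proposed continuity/density fix is not convincing as stated --- on a surface, curves avoiding a fixed curve need not span $N_1$, so knowing $\tilde D_0\cdot\tilde C=0$ only for that restricted class does not obviously propagate to all curves. Since the paper never applies the lemma to a non-normal $D$, this gap is harmless for its purposes; if you want the lemma in the generality stated you will need a genuinely different argument for that last step (or simply add the hypothesis that $D$ is normal).
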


\begin{proof} We fix  $\barX$ a compactification of $\upX$.
We argue by contradiction and suppose that there exists a curve $C \subset D$ such that $D \cdot C \neq 0$.
Since $\pi_1(D)_X$ is finite we see that
$$
\fibre{\pi_{X}}{D}=\cup_{\alpha \in \pi_1(X)/\pi_1(D)_X} D_\alpha
$$ 
is an infinite disjoint union of finite \'etale covers of $D$. 
Note that each $D_\alpha$ is a {\em compact} analytic subvariety of $\upX$, so also of the compactification $\barX$.
Moreover we have
\[
N_{D_\alpha/\upX} \simeq \pi_X^* N_{D/X}
\]
for every $\alpha$. For every $\alpha$ there exists a curve $C_\alpha \subset D_\alpha$
such that $\pi_X(C_\alpha)=C$ so by the projection formula
\[
D_\alpha \cdot C_\alpha = D \cdot C \neq 0.
\]
Since the $D_\alpha$ are disjoint we have
$D_{\alpha'} \cdot C_\alpha=0$ for $\alpha' \neq \alpha$, so the divisors $D_\alpha$ 
are linearly independent in the Neron-Severi group of $\barX$. 
This contradicts the finite-dimensionality of $NS(\barX)$.  
\end{proof}

\begin{remark} \label{remarkstricttransform}
Let \holom{\mu}{X}{Y} be a birational morphism between compact manifolds,
and let $D \subset Y$ be an irreducible divisor such that there exists a covering family
of curves $(C_t)_{t \in T}$ such that $D \cdot C_t < 0$. Denote by $D' \subset X$ the strict transform
of $D$, then there exists a family
of curves $(C'_t)_{t \in T}$ such that $D' \cdot C'_t < 0$. In fact we have
$$
D' = \mu^* D - \sum_i a_i E_i,
$$
where $a_i \geq 0$ and the $E_i$ are exceptional divisors. If $C'_t$ is the strict transform of a general member of the
family  $(C_t)_{t \in T}$, then $C'_t \not\subset E_i$ for any $i$ and the statement follows from the projection formula.
\end{remark}

The elementary Lemma \ref{lemmadivisor} yields strong restrictions on the birational geometry of $X$:

\begin{corollary} \label{corollaryMMP2}
Let $X$ be a compact K\"ahler surface satisfying $(H)$.
Then $X$ is a relative minimal model, i.e. does not contain (-1)-curves. $\square$
\end{corollary}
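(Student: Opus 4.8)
The plan is to derive the statement directly from Lemma \ref{lemmadivisor}, which is essentially tailor-made for this purpose. Recall that a $(-1)$-curve on a smooth surface is a curve $E \subset X$ with $E \simeq \PP^1$ and $E^2 = -1$; by Castelnuovo's contractibility criterion these are precisely the curves one must exclude in order for $X$ to be a relative minimal model. So I would argue by contradiction and assume that $X$ contains such a curve $E$.

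The key point is to check that $E$ meets the hypotheses of Lemma \ref{lemmadivisor}. On a surface $E$ is an integral divisor, so it only remains to verify that $\pi_1(E)_X$ is finite. Here I would use that $E$ is \emph{rational}: its normalisation is $\PP^1$, which is simply connected, so $\pi_1(\hat{E}) = 1$ and therefore $\pi_1(E)_X = \mathrm{Im}(\nu_*)$ is trivial, in particular finite. Thus Lemma \ref{lemmadivisor} applies to $D = E$.

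The lemma then forces $E|_E$ to be numerically trivial. But $E|_E = \mathcal{O}_E(E)$ is a line bundle on the curve $E$ of degree $E \cdot E = E^2 = -1 \neq 0$, and a line bundle on a curve is numerically trivial if and only if it has degree zero. This contradiction shows that no $(-1)$-curve can exist on $X$, so $X$ is a relative minimal model.

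I do not expect any genuine obstacle here: once Lemma \ref{lemmadivisor} is in hand the corollary is immediate, and the single step requiring (minor) attention is the verification that a $(-1)$-curve is an integral divisor whose $\pi_1$-image in $X$ is finite — which follows at once from its rationality. The whole content of the argument is the interplay between the topological vanishing $\pi_1(\PP^1)=1$ and the negativity $E^2=-1$, the former feeding Lemma \ref{lemmadivisor} and the latter contradicting its conclusion.
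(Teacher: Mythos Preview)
Your argument is correct and is exactly the intended one: the paper marks this corollary with a $\square$ because it follows immediately from Lemma \ref{lemmadivisor} in precisely the way you describe. The only content is that a $(-1)$-curve is a rational divisor with $\pi_1(E)_X$ trivial and $E^2=-1\neq 0$, contradicting the numerical triviality of $E|_E$.
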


\begin{corollary} \label{corollaryMMP3}
Let $X$ be a compact K\"ahler threefold satisfying $(H)$.
Let 
\[
X=X_1 \stackrel{\mu_1}{\dashrightarrow} X_2 \stackrel{\mu_2}{\dashrightarrow} X_3 \stackrel{\mu_3}{\dashrightarrow} 
\ldots \stackrel{\mu_r}{\dashrightarrow} X_{r+1}   
\]
be a sequence of elementary Mori contractions between compact K\"ahler varieties. 
Then for every $j \in \{ 1, \ldots, r \}$ the contraction
$\mu_j$ contracts a divisor $E_j \subset X_j$ onto a curve $B_j$. 
In particular all the varieties $X_j$ are smooth.
\end{corollary}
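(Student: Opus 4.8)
The plan is to argue by induction on $j$, proving simultaneously that every $X_j$ is smooth and that each $\mu_j$ is the blow-up of a smooth threefold along a smooth curve. The induction hypothesis at stage $j$ is that $X_1,\dots,X_j$ are smooth and that $\mu_1,\dots,\mu_{j-1}$ are all divisorial contractions of a divisor onto a smooth curve; in particular the composite $\sigma_j=\mu_{j-1}\circ\cdots\circ\mu_1\colon X_1\to X_j$ is then a birational \emph{morphism} between smooth compact K\"ahler threefolds. The base case $j=1$ is immediate, since $X_1=X$ is a manifold satisfying (H).

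For the inductive step I would first reduce to divisorial contractions. Each relevant $\mu_j$ is birational (a fibre-type contraction would lower the dimension, whereas we are describing contractions issued from the threefold $X_j$), and since $X_j$ is smooth, the K\"ahler analogue of Mori's classification of extremal contractions shows that a birational extremal contraction from a smooth threefold is automatically divisorial: there are no small contractions. This is exactly the mechanism that keeps all the $X_j$ smooth and forbids flips. Thus $\mu_j$ contracts an irreducible divisor $E_j\subset X_j$, and by the classification $E_j$ is either contracted onto a smooth curve $B_j$ (in which case $X_{j+1}$ is smooth and we are done), or onto a point, in which case $E_j$ is $\PP^2$, a smooth quadric or a quadric cone, and its normal bundle is negative along the contracted curves.

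The core of the argument is to exclude the contraction-to-a-point case, and the key idea is that one cannot work on $X_j$ directly — it is not clear that $X_j$ inherits (H) — so the negativity must be transported back to $X_1$, where Hypothesis (H) is available. In the to-a-point case $E_j$ is covered by a family of contracted curves $(C_t)$ (lines or rulings) with $E_j\cdot C_t<0$. Applying Remark \ref{remarkstricttransform} to the birational morphism $\sigma_j\colon X_1\to X_j$, the strict transform $E_j'\subset X_1$ carries a covering family $(C_t')$ with $E_j'\cdot C_t'<0$, so $E_j'|_{E_j'}$ is \emph{not} numerically trivial. On the other hand $E_j'$ is birational to $E_j$, hence a rational surface, so its normalization has finite fundamental group and $\pi_1(E_j')_{X_1}$ is finite. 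Lemma \ref{lemmadivisor}, applied on $X_1$, then forces $E_j'|_{E_j'}$ to be numerically trivial, a contradiction. Hence $\mu_j$ contracts $E_j$ onto a curve, $X_{j+1}$ is smooth, and the induction closes.

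I expect the main obstacles to be twofold. First, one must have at hand the K\"ahler counterpart of Mori's classification of contractions of smooth threefolds (so that smoothness propagates and no small contractions intervene); this is a structural input independent of (H). Second, the genuinely delicate conceptual step is realizing that Lemma \ref{lemmadivisor} is unavailable on the intermediate $X_j$ and must be invoked on $X_1$, which is why Remark \ref{remarkstricttransform} is needed to carry the covering family — together with the strict negativity $E_j'\cdot C_t'<0$ — through the birational morphism $\sigma_j$. Once this transport is set up, the remaining verifications (rationality of $E_j'$, hence finiteness of $\pi_1(E_j')_{X_1}$) are routine.
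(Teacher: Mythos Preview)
Your proposal is correct and follows essentially the same approach as the paper: induction on $j$, smoothness of $X_j$ forces $\mu_j$ to be divisorial, then the negativity of $E_j|_{E_j}$ is transported to the strict transform $E_j'\subset X_1$ via Remark~\ref{remarkstricttransform}, and Lemma~\ref{lemmadivisor} on $X_1$ rules out the contraction-to-a-point case. The only cosmetic difference is that the paper uses the contrapositive of Lemma~\ref{lemmadivisor} to conclude that $\pi_1(E_j')_X$ is infinite and then invokes the isomorphism $\pi_1(E_j')_X\simeq\pi_1(E_j)_{X_j}$, whereas you assume contraction-to-a-point first, observe $E_j'$ is rational, and derive the contradiction directly; the content is identical.
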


\begin{proof}
We proceed by induction on $j$ according to the following scheme:

a) $X_j$ is smooth, so the contraction $\mu_{j}$ is divisorial with exceptional divisor $E_j$.

b) $\mu_{j}$ contracts $E_j$ onto a curve, so $X_{j+1}$ is smooth.

Since $X_1=X$ is smooth the initial step is obvious. 
Suppose now that $X_j$ is smooth, then $\mu_{j}$ contracts a divisor $E_j$.
There exists a covering family of rational curves $(C_t)_{t \in T}$ such that $E_j \cdot C_t<0$.
Thus if $E_j' \subset X$ denotes the strict transform, then by Remark \ref{remarkstricttransform}
the restriction $E_j'|_{E'_j}$ is not numerically trivial. Thus by Lemma \ref{lemmadivisor},
the group $\pi_1(E_j')_X$ is infinite. Since we have
$$
\pi_1(E_j')_X \simeq \pi_1(E_j)_{X_j}
$$
the classification of divisorial Mori contractions
in dimension three (\cite{Mor82}, cf. Theorem \ref{theorempeternell} for the K\"ahler case) implies that $E_j$ is not 
contracted to a point. Thus $E_j$ is contracted onto a curve and $X_{j+1}$ is smooth.
\end{proof}

\section{Threefolds satisfying (H)}

\subsection{Euler characteristic of compact K\"{a}hler threefolds}

The following lemma shows first that the holomorphic Euler characteristic of compact K\"ahler threefolds behaves as in the projective case.

\begin{lemma}\label{lemmachithreefold}
If $X$ is a (non uniruled) compact K\"ahler threefold admitting a smooth minimal model, its Euler characteristic is non positive:
$$\chi(X,\mathcal{O}_X)\le0.$$
\end{lemma}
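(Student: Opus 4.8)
The plan is to reduce to the projective case and then apply a deformation argument. The statement concerns a non-uniruled compact Kähler threefold $X$ admitting a smooth minimal model; we want $\chi(X,\mathcal{O}_X)\le 0$. Since $\chi(X,\mathcal{O}_X)$ is a birational invariant for smooth projective varieties (more generally for manifolds in the same birational class, as the plurigenera and Hodge numbers $h^{0,q}$ are birational invariants), I may replace $X$ by its smooth minimal model and assume from the outset that $X$ is itself a smooth minimal model, i.e. $K_X$ is nef. The goal becomes: for a smooth minimal Kähler threefold $X$ with $K_X$ nef (and $X$ non-uniruled), show $\chi(X,\mathcal{O}_X)\le 0$.

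**The projective case** is exactly Miyaoka's theorem, which is alluded to in the outline just before Theorem~\ref{theoremchi}. There one uses the generic semipositivity of the cotangent sheaf together with the Miyaoka--Yau type inequalities and Riemann--Roch to deduce $\chi\le 0$ (equivalently, the non-negativity of certain Chern number combinations). So in the algebraic case I would simply invoke Miyaoka's result. The content of the lemma is therefore to transport this inequality to the non-algebraic Kähler setting.

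**The bridge** I expect to use is the deformation/approximation theory of Kähler threefolds: a minimal compact Kähler threefold is either projective or admits structure that can be analysed directly. Concretely, I would recall that for a smooth minimal Kähler threefold, $\chi(X,\mathcal{O}_X)$ is computed by the holomorphic Riemann--Roch formula
\[
\chi(X,\mathcal{O}_X)=-\frac{1}{24}\,K_X\cdot c_2(X),
\]
which holds on any compact complex threefold. Thus it suffices to prove $K_X\cdot c_2(X)\ge 0$. This reduces the lemma to a single numerical inequality, and this inequality is precisely the one established (via generic semipositivity) by Miyaoka in the projective case and extended to the compact Kähler case by the work of Peternell and collaborators on Kähler threefolds (cf.\ the references \cite{Pet86} and the MMP for Kähler threefolds discussed in the paper). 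I would quote the Kähler version of the nefness of $c_2$ against $K_X$ for minimal threefolds.

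**The main obstacle** will be justifying the inequality $K_X\cdot c_2(X)\ge 0$ in the non-algebraic Kähler case, since Miyaoka's original argument relies on reduction mod $p$ and generic semipositivity of $\Omega_X^1$, tools not directly available outside the projective world. The resolution should come from the analytic counterpart: generic semipositivity of the cotangent bundle of a non-uniruled compact Kähler manifold, proved using the minimal model program for Kähler threefolds and the characterisation of uniruledness. Since $X$ is assumed non-uniruled and minimal, $\Omega_X^1$ is generically nef along movable curves, and intersecting the associated positivity with the nef class $K_X$ yields $K_X\cdot c_2(X)\ge 0$. Combined with Riemann--Roch this gives $\chi(X,\mathcal{O}_X)\le 0$, completing the proof.
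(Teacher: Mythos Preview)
Your reduction to the inequality $K_X \cdot c_2(X) \geq 0$ via Riemann--Roch is correct and is exactly how the paper frames the problem. The gap is in the last step: you assert that generic semipositivity of $\Omega_X^1$ holds for non-uniruled compact K\"ahler manifolds and can be quoted from Peternell's work or the K\"ahler MMP references in the paper, but this is not so. Miyaoka's argument uses characteristic-$p$ methods unavailable in the analytic category, and the references you point to (\cite{Pet86} is about algebraicity criteria; \cite{CP97,Pet98,Pet01} give partial contraction theorems) do not contain a K\"ahler version of generic semipositivity. So as written your proposal reduces the lemma to a black box that is not actually supplied.

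The paper avoids this difficulty by a case-by-case computation rather than a general positivity principle. After disposing of the projective case (Miyaoka) and the algebraic-dimension-zero case (Demailly--Peternell \cite{DP03}), one is left with a non-algebraic threefold of positive algebraic dimension; such an $X$ is not simple, so by \cite{Pet01} the canonical bundle is semiample. One then splits according to $\kappa(X)\in\{0,1,2\}$. For $\kappa=0$ the Beauville--Bogomolov decomposition gives the claim. For $\kappa=1$ or $2$ one writes $K_X \simeq_{\mathbb Q} \varphi^* A$ for the Iitaka fibration $\varphi$ and an ample $A$, and computes $K_X \cdot c_2(X)$ by restricting to a general fibre (when $\kappa=1$) or to the preimage of a general hyperplane section (when $\kappa=2$); in both cases the computation reduces to $c_2$ of a minimal surface of non-negative Kodaira dimension, which is $\geq 0$ by Noether's formula. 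This structural use of semiampleness, not a semipositivity theorem for $\Omega_X^1$, is what makes the K\"ahler case go through.
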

\begin{proof}
If $X$ is projective this is a well-known consequence of Miyaoka's semipositivity 
theorem for the cotangent bundle \cite{Miy87}. If $X$ has algebraic dimension zero, the statement is due to Demailly and Peternell \cite[Thm.6.1]{DP03}.
Thus we are left to deal with the case where $X$ is nonalgebraic of algebraic dimension at least one. In particular $X$ is not simple, so 
by \cite[Thm.1]{Pet01} the canonical bundle is semiample. 

If $\kappa(X)=0$ (thus $c_1(X)=0$ by abundance) the Beauville-Bogomolov decomposition implies the claim.

If $\kappa(X)=1$ the Iitaka fibration $\holom{\varphi}{X}{C}$ maps onto a curve and $K_X \simeq_\Q \varphi^* A$ for some ample $\Q$-divisor on $C$.
If $F$ is a general $\varphi$-fibre, we have
$$
F . c_2(X) = c_2(F) \geq 0
$$
since $F$ is a compact surface with non-negative Kodaira dimension. Thus by the Riemann-Roch formula for threefolds 
$$
\chi(X, \sO_X) = - \frac{1}{24} K_X \cdot c_2(X) = - \frac{1}{24} \varphi^* A \cdot c_2(X) \leq 0.
$$

If $\kappa(X)=2$ the Iitaka fibration $\holom{\varphi}{X}{S}$ maps onto a surface and $K_X \simeq_\Q \varphi^* A$ for some ample $\Q$-divisor on $S$.
For $m \gg 0$, let $D \in |mA|$ be an effective, smooth very ample divisor such that $X_D:=\fibre{\varphi}{D}$ is a smooth elliptic surface.
By the adjunction formula we have $K_{X_D} \simeq_\Q \varphi|_{X_D}^* (m+1) A|_D$
where \holom{\varphi|_{X_D}}{X_D}{D} is the restriction of $\varphi$ to $X_D$.
Furthermore we have $\varphi^* D \cdot c_2(X) = c_2(T_X|_{X_D})$ and by the exact sequence
$$
0 \rightarrow T_{X_D} \rightarrow T_X|_{X_D} \rightarrow N_{X_D/X} \rightarrow 0
$$
we get
$$
c_2(T_X|_{X_D}) = c_2(T_{X_D}) + -K_{X_D} \cdot N_{X_D/X}.
$$
Since $N_{X_D/X} \simeq \varphi|_{X_D}^* D|_D$  and $K_{X_D}$ are pull-backs from the curve $D$, 
the second term is zero. The first term is non-negative since
$X_D$ is a compact surface with non-negative Kodaira dimension. We conclude again by Riemann-Roch.
\end{proof}

\subsection{Tools from $L^2$ theory and proof of Theorem \ref{theoremchi}}

Let us point out here the relevant facts on $L^2$ theory we shall use in the argument below; for general references, see \cite[Ch.3]{MaMa}. Let $X$ be a compact K\"{a}hler manifold (with K\"{a}hler form $\omega$); consider the spaces of holomorphic $p$-form on $\upX$ which are $L^2$ with respect to $\tilde{\omega}$ (metric induced by $\omega$):
$$H^0_{(2)}(\upX, \Omega^p_{\upX}):=
\left\{f\in H^0(\upX, \Omega^p_{\upX})\vert \int_{\upX}f\wedge \overline{f}\wedge\tilde{\omega}^{n-p}<+\infty \right\}.$$
There are obvious inner products which turn them into Hilbert spaces (when $p=\dim X$ it is the Bergman space of $\upX$). 
If $(\sigma^{(p)}_j)_{j\ge1}$ denotes any Hilbert basis of $H^0_{(2)}(\upX, \Omega^p_{\upX})$, we can form the following kernels:
$$k^{(p)}(x)=\sum_{j\ge1}\vert\sigma^{(p)}_j(x)\vert^2, \quad x\in\upX,$$
the norms being computed with respect to $\tilde{\omega}$. It is a well-known fact that the sums converge and define smooth functions, invariant under the action of $\pi_1(X)$. With this in mind, we can define the following $L^2$ Hodge numbers:
$$h^0_{(2)}(\upX, \Omega^p_{\upX}):=\int_{\cal D}k^{(p)}(x)dV_{\tilde{\omega}},$$
where $\cal D$ is a fundamental domain of the universal cover. These invariants are related to those of $X$ in a rather subtle manner through the $L^2$ index theorem of Atiyah.
\begin{theorem}\cite{At76}\label{theoremL2index}
Let $X$ be a compact K\"{a}hler manifold of dimension $n$. Its $L^2$ Euler characteristic
$$\chi_{(2)}(\upX, \mathcal{O}_{\upX}):=\sum_{p=0}^n (-1)^p h^0_{(2)}(\upX, \Omega^p_{\upX})$$
equals the usual one:
$$\chi_{(2)}(\upX, \mathcal{O}_{\upX})=\chi(X,\mathcal{O}_X).$$
\end{theorem}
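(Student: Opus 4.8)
The plan is to recognise the asserted identity as the specialisation to the Dolbeault complex of Atiyah's $L^2$-index theorem for $\Gamma$-invariant elliptic operators, where $\Gamma=\pi_1(X)$ acts freely, properly discontinuously and cocompactly on $\upX$ via the covering $\pi_X:\upX\to X$. First I would reinterpret the $L^2$ Hodge numbers. Since $\bar\partial^*$ sends $(p,0)$-forms to $(p,-1)$-forms it vanishes identically on $\Omega^p_{\upX}$, so every $L^2$ holomorphic $p$-form is automatically $\bar\partial$-harmonic; conversely an $L^2$ $\bar\partial$-harmonic $(p,0)$-form is $\bar\partial$-closed, hence holomorphic. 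Thus $H^0_{(2)}(\upX,\Omega^p_{\upX})=\mathcal H^{p,0}_{(2)}(\upX)$, the space of $L^2$ harmonic $(p,0)$-forms. The kernel $k^{(p)}$ is the pointwise trace on the diagonal of the Schwartz kernel of the orthogonal projector onto this space, which is $\Gamma$-invariant, so integrating over $\mathcal D$ computes exactly its von Neumann $\Gamma$-dimension: $h^0_{(2)}(\upX,\Omega^p_{\upX})=\dim_\Gamma\mathcal H^{p,0}_{(2)}(\upX)$. Complex conjugation is an antilinear, $\Gamma$-equivariant isomorphism $\mathcal H^{p,0}_{(2)}\to\mathcal H^{0,p}_{(2)}$ and hence preserves $\Gamma$-dimension, so
$$\chi_{(2)}(\upX,\mathcal O_{\upX})=\sum_p(-1)^p\dim_\Gamma\mathcal H^{0,p}_{(2)}(\upX).$$
By the $L^2$ Hodge decomposition on the complete Kähler manifold $\upX$ the right-hand side is the $\Gamma$-index of the rolled-up Dolbeault operator $\bar\partial+\bar\partial^*$ on $\Omega^{0,\bullet}_{\upX}$, while the Hodge identity $\chi(X,\mathcal O_X)=\sum_q(-1)^q h^{0,q}(X)$ exhibits $\chi(X,\mathcal O_X)$ as the ordinary index of the same operator on $X$. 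It therefore suffices to prove that the two indices coincide.

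For this I would run the heat-equation argument. Write $\bar\Delta$ for the Dolbeault Laplacian on $X$ and $\tilde{\bar\Delta}$ for its lift to $\upX$. The super McKean--Singer formula gives, for every $t>0$,
$$\chi(X,\mathcal O_X)=\sum_p(-1)^p\operatorname{Tr}\bigl(e^{-t\bar\Delta_p}\bigr),$$
the nonzero spectrum cancelling in the alternating sum by the supersymmetry of $\bar\partial+\bar\partial^*$. This supersymmetry is $\Gamma$-equivariant on $\upX$, so the faithful normal trace $\operatorname{Tr}_\Gamma$ yields the corresponding $\Gamma$-McKean--Singer identity
$$\sum_p(-1)^p\dim_\Gamma\mathcal H^{0,p}_{(2)}(\upX)=\sum_p(-1)^p\operatorname{Tr}_\Gamma\bigl(e^{-t\tilde{\bar\Delta}_p}\bigr),$$
again independent of $t>0$.

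The two supertraces are then compared as $t\to0^+$. The covering $\pi_X$ is a local isometry intertwining $\bar\Delta$ and $\tilde{\bar\Delta}$; the small-time diagonal asymptotics of a heat kernel are local (Minakshisundaram--Pleijel parametrix), with the $\gamma\neq\mathrm{id}$ contributions exponentially small as $t\to0$. Hence the constant term of the supertrace expansion, the local index density, is a universal curvature expression which on $\upX$ is literally the pullback of the one on $X$; integrating it over the fundamental domain $\mathcal D$ gives the same number as integrating over $X$. Passing to the limit, both $t$-independent supertraces equal this common integral, whence $\chi_{(2)}(\upX,\mathcal O_{\upX})=\chi(X,\mathcal O_X)$.

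The main obstacle is the analytic bookkeeping behind the $\Gamma$-supertrace on the noncompact cover. Unlike $\bar\Delta$ on $X$, the operator $\tilde{\bar\Delta}$ has in general continuous spectrum, so the eigenvalue decomposition underlying McKean--Singer must be replaced by the von Neumann trace of the $\Gamma$-equivariant heat operators: one must check that $e^{-t\tilde{\bar\Delta}_p}$ has a $\Gamma$-invariant smooth Schwartz kernel of finite $\Gamma$-trace (using cocompactness and Gaussian heat-kernel bounds) and that the alternating sum of these $\Gamma$-traces is genuinely $t$-independent, i.e. that the positive part of the spectrum contributes nothing. Granting this standard package --- which is precisely the content of Atiyah's theorem \cite{At76} --- the geometric comparison of local index densities is immediate, since $\pi_X$ is a local isometry and $\mathcal D$ is a fundamental domain.
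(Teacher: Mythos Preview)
The paper does not supply its own proof of this statement: Theorem~\ref{theoremL2index} is simply quoted from Atiyah \cite{At76} (and the reference \cite{MaMa}) as background, with no argument given. Your sketch is a correct outline of the standard heat-kernel proof of Atiyah's $L^2$-index theorem specialised to the Dolbeault complex, and the identification $h^0_{(2)}(\upX,\Omega^p_{\upX})=\dim_\Gamma\mathcal H^{p,0}_{(2)}(\upX)$ together with the K\"ahler conjugation $\mathcal H^{p,0}_{(2)}\simeq\mathcal H^{0,p}_{(2)}$ are exactly the translations needed to match the paper's formulation with Atiyah's original statement. Since the paper treats this as a black box, there is nothing further to compare.
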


The $L^2$ Hodge numbers can be seen as a measure of the size of the $\pi_1(X)$-modules $H^0_{(2)}(\Omega^p_{\upX})$ but have (almost) nothing to do with the dimension of the underlying vector spaces as shown by the following proposition.
\begin{proposition}\label{propBergman}
Let $X$ be a compact K\"{a}hler manifold with infinite fundamental group. If $h^0_{(2)}(\upX, \Omega^p_{\upX})>0$, the space $H^0_{(2)}(\upX, \Omega^p_{\upX})$ is infinite dimensional in the usual sense. In particular, if $X$ is a compact K\"{a}hler manifold satisfying $(H)$, the Bergman space of $\upX$ is zero dimensional: $h^0_{(2)}(\upX, K_{\upX})=0$.
\end{proposition}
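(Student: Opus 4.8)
The plan is to treat the two assertions in turn. The first is a soft, measure-theoretic comparison between the Hilbert-space dimension of $H^0_{(2)}(\upX, \Omega^p_{\upX})$ and the $L^2$ Hodge number $h^0_{(2)}(\upX, \Omega^p_{\upX})$, which only integrates the kernel over a fundamental domain; the infinitude of $\pi_1(X)$ will force the former to be infinite as soon as the latter is positive. The second assertion then follows by showing, by contradiction, that under $(H)$ the Bergman space is always \emph{finite} dimensional: I will prove that every $L^2$ holomorphic $n$-form on $\upX$ extends to a genuine holomorphic $n$-form on the compactification $\barX$, so that $H^0_{(2)}(\upX, K_{\upX})$ embeds into the finite-dimensional space $H^0(\barX, K_{\barX})$, and then invoke the first part.

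For the first assertion I would fix a Hilbert basis $(\sigma^{(p)}_j)_{j \ge 1}$, so that $\int_{\upX} |\sigma^{(p)}_j|^2 \, dV_{\tilde\omega} = 1$ for each $j$. By monotone convergence the integral of the kernel over the whole universal cover computes the Hilbert-space dimension,
\[
\int_{\upX} k^{(p)}(x) \, dV_{\tilde\omega} = \sum_{j \ge 1} \int_{\upX} |\sigma^{(p)}_j|^2 \, dV_{\tilde\omega} = \dim_{\C} H^0_{(2)}(\upX, \Omega^p_{\upX}).
\]
On the other hand $k^{(p)}$ is $\pi_1(X)$-invariant and the deck transformations act by isometries preserving $dV_{\tilde\omega}$, so decomposing $\upX$ into translates of a fundamental domain $\mathcal{D}$ yields
\[
\int_{\upX} k^{(p)}(x) \, dV_{\tilde\omega} = \#\pi_1(X) \cdot \int_{\mathcal{D}} k^{(p)}(x) \, dV_{\tilde\omega} = \#\pi_1(X) \cdot h^0_{(2)}(\upX, \Omega^p_{\upX}).
\]
Comparing the two gives $\dim_{\C} H^0_{(2)} = \#\pi_1(X) \cdot h^0_{(2)}$; as $\pi_1(X)$ is infinite and $h^0_{(2)} > 0$ by hypothesis, the dimension is infinite.

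For the second assertion, suppose for contradiction that $h^0_{(2)}(\upX, K_{\upX}) > 0$; then by the first part the Bergman space contains infinitely many linearly independent $L^2$ holomorphic $n$-forms. The crucial point is that for top-degree forms the $L^2$ density is metric-independent: up to a universal constant, $|\sigma|^2_{\tilde\omega} \, dV_{\tilde\omega} = i^{n^2} \sigma \wedge \overline{\sigma}$, so $\sigma$ lies in the Bergman space precisely when $\int_{\upX} i^{n^2}\sigma \wedge \overline{\sigma} < \infty$, a condition that involves neither the metric nor the behaviour of $\tilde\omega$ near the boundary. After replacing $\barX$ by a bimeromorphic model that is an isomorphism over $\upX$, I may assume $D := \barX \setminus \upX$ is a divisor, so that $\sigma$, read on $\barX$, is a meromorphic $n$-form with poles at worst along $D$. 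Writing $\sigma = z_1^{-a} g \, dz_1 \wedge \cdots \wedge dz_n$ in local coordinates near a general point of a component $\{z_1 = 0\}$ of $D$ (with $g$ holomorphic and $g|_{z_1=0} \not\equiv 0$), the finiteness of $\int i^{n^2}\sigma \wedge \overline{\sigma} \sim \int |z_1|^{-2a} |g|^2 \, dV$ forces $a \le 0$, by the integrability of $r^{1-2a}$ near $r = 0$. Hence $\sigma$ has no pole along $D$ and extends to an element of $H^0(\barX, K_{\barX})$.

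This produces an injection $H^0_{(2)}(\upX, K_{\upX}) \hookrightarrow H^0(\barX, K_{\barX})$, injective because $\upX$ is dense in $\barX$, and the target is finite dimensional since $\barX$ is a compact complex manifold. This contradicts the infinite dimensionality obtained above, whence $h^0_{(2)}(\upX, K_{\upX}) = 0$. The main obstacle, and the reason the statement is restricted to the Bergman space rather than to arbitrary $\Omega^p_{\upX}$, is precisely this extension step: it is the metric-independence of the $L^2$ condition in top degree that lets me discard the (complete, and near $D$ badly behaved) pulled-back metric $\tilde\omega$ and reduce the $L^2$ bound to a clean local pole-order estimate. For $p < n$ no such metric-free reformulation is available and the argument does not apply.
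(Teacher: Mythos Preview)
Your proof is correct and follows essentially the same approach as the paper: the kernel-integration identity $\dim_{\C} H^0_{(2)} = \#\pi_1(X)\cdot h^0_{(2)}$ for the first part, and the metric-independence of the top-degree $L^2$ condition together with $L^2$ extension across the boundary for the second. You actually supply more detail on the extension step than the paper (which simply asserts the identification $H^0_{(2)}(\upX, K_{\upX}) = H^0(\barX, K_{\barX})$); the only quibble is that writing $\sigma = z_1^{-a} g\, dz_1\wedge\cdots\wedge dz_n$ presupposes meromorphy, but the same integrability estimate applied termwise to the Laurent expansion of the coefficient function rules out essential singularities as well.
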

\begin{proof}
We argue by contradiction and assume that $H^0_{(2)}(\upX, \Omega^p_{\upX})$ is finite dimensional. Let us then consider $(\sigma^{(p)}_j)_{j=1..N}$ an orthonormal basis of the latter and compute:
$$N=\sum_{j=1}^N \int_{\upX}\vert\sigma^{(p)}_j\vert^2 dV_{\tilde{\omega}}=\int_{\upX}k^{(p)}dV_{\tilde{\omega}}
=\sum_{\gamma\in\pi_1(X)}\int_{\gamma(\cal D)}k^{(p)}dV_{\tilde{\omega}}
=\left\vert\pi_1(X)\right\vert h^0_{(2)}(\Omega^p_{\upX}).$$
Since the fundamental group of $X$ is assumed to be infinite, we get a contradiction.

To deduce the last conclusion, we have to use a special feature of the Bergman space: the integrability condition does not involve the metric when $p=\dim X$. If $\barX$ is a compactification of $\upX$ the canonical $L^2$ forms defined on $\upX$ extend through the boundary and this yields an identification
$$H^0_{(2)}(\upX, K_{\upX})=H^0(\barX,K_{\barX}).$$
This last space being finite dimensional the $L^2$ Hodge number $h^0_{(2)}(K_{\upX})$ has to vanish according to what precedes.
\end{proof}

We come now to our main observation in this paragraph: 
the holomorphic Euler characteristic is non negative for K\"ahler threefold with (H); this is achieved by using $L^2$ theory developed in the previous lines.

\begin{proof}[Proof of the Theorem \ref{theoremchi}]
By hypothesis $X$ admits a minimal model which by Corollary \ref{corollaryMMP3} is smooth.
Thus by
Lemma \ref{lemmachithreefold} above it is sufficient to show that $\chi(X,\mathcal{O}_X)\geq0$.

By Theorem \ref{theoremL2index}, the Euler characteristic is given by:
$$\chi(X,\mathcal{O}_X)=\chi_{(2)}(\upX, \mathcal{O}_{\upX})=h^0_{(2)}(\mathcal{O}_{\upX})-h^0_{(2)}(\Omega^1_{\upX})+h^0_{(2)}(\Omega^2_{\upX})-h^0_{(2)}(K_{\upX}).$$
The universal cover $\upX$ being non compact, there is no non zero $L^2$ holomorphic functions on $\upX$. The vanishing of the first $L^2$ Hodge number is given by Gromov's theorem below and Lemma \ref{maptocurve}. On the other hand, the vanishing of $h^0_{(2)}(K_{\upX})$ is a consequence of Proposition \ref{propBergman}. The $L^2$-index theorem is finally reduced to
$$\chi(X, \mathcal{O}_X)=\chi_{(2)}(\upX, \mathcal{O}_{\upX})=h^0_{(2)}(\Omega^2_{\upX})\ge0.$$
\end{proof}

For the sake of completeness, let us recall (a weak form of) Gromov theorem on $L^2$ forms of degree one.
\begin{theorem}\cite{Gro89}
Let $X$ be a compact K\"ahler manifold having non zero first $L^2$ Betti number. 
Then (up to finite \'etale cover) $X$ admits a fibration onto a curve of genus $g\ge2$.
\end{theorem}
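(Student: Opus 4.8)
The plan is to produce the fibration from the existence of nonzero $L^2$ holomorphic one-forms on the universal cover, via an equivariant version of the Castelnuovo--de Franchis theorem. First I would reformulate the hypothesis in the language already set up above. Since the K\"ahler identities commute with the $\pi_1(X)$-action, the space of $L^2$ harmonic one-forms on $\upX$ decomposes into its holomorphic and antiholomorphic parts, so the nonvanishing of the first $L^2$ Betti number is equivalent to $h^0_{(2)}(\upX, \Omega^1_{\upX})>0$; in other words there exists a nonzero $L^2$ holomorphic one-form $\alpha$ on $\upX$ (and hence a whole $\pi_1(X)$-invariant space of them).

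The analytic heart of the argument is Gromov's cup-product vanishing. Because $\upX$ is simply connected and holomorphic one-forms on a K\"ahler manifold are closed, one may write $\alpha=df$ for a holomorphic function $f$ on $\upX$, and the $L^2$ bound on $\alpha$ forces controlled growth of $f$. For two $L^2$ holomorphic one-forms $\alpha=df$ and $\beta=dg$ one has $\alpha\wedge\beta=d(f\,\beta)$, and an integration-by-parts argument in the $L^2$ setting (exploiting that $f\,\beta$ pairs in $L^1$ against test forms, so that Stokes applies despite the noncompactness) shows that the class of $\alpha\wedge\beta$ vanishes in reduced $L^2$-cohomology, whence $\alpha\wedge\beta=0$ as a form. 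I expect this step --- making the $L^2$ Stokes / primitive-growth argument rigorous on the noncompact cover --- to be the main obstacle.

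Granting $\alpha\wedge\beta=0$ for all pairs of $L^2$ holomorphic one-forms, these forms become pointwise proportional and thus span a rank-one, $\pi_1(X)$-invariant subsheaf of $\Omega^1_{\upX}$, defining an invariant (singular) holomorphic foliation. An equivariant Castelnuovo--de Franchis argument then integrates this foliation to a fibration $\upX\to\Sigma$ onto a Riemann surface compatible with the deck action; after passing to a finite \'etale cover of $X$ (to kill the monodromy obstruction to descent) this yields a fibration $X\to C$ onto a smooth compact curve whose pullback one-forms recover the span of the $\alpha$. Finally I would read off the genus: if $C$ had genus at most one its universal cover would be $\C$ or a point, which carries no nonzero $L^2$ holomorphic one-form (the form $dz$ is not square-integrable on $\C$), so the pullbacks could not be $L^2$ on $\upX$; hence $C$ is hyperbolic, i.e. $g(C)\geq 2$, as claimed. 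Only for $g\geq 2$, where $\tilde C\simeq\D$, is the relevant space of $L^2$ holomorphic one-forms nonzero, which is exactly what makes the construction consistent.
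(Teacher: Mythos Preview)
The paper does not prove this statement at all: it is quoted as a known result of Gromov with a citation to \cite{Gro89}, introduced by ``For the sake of completeness, let us recall (a weak form of) Gromov theorem on $L^2$ forms of degree one.'' There is therefore no proof in the paper against which to compare your proposal.

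That said, your sketch is essentially the standard argument behind Gromov's theorem: the K\"ahler decomposition of $L^2$ harmonic $1$-forms into $(1,0)$ and $(0,1)$ parts, Gromov's cup-product lemma forcing $\alpha\wedge\beta=0$ for $L^2$ holomorphic one-forms, and then an equivariant Castelnuovo--de Franchis to produce the fibration (the last step is usually attributed to Arapura, Bressler, and Ramachandran or to Jost--Zuo rather than to Gromov himself). Your identification of the $L^2$ Stokes argument as the delicate point is accurate. One small correction: the genus argument at the end should not be phrased as ``pullbacks could not be $L^2$,'' since pulling back an $L^2$ form along a fibration need not preserve $L^2$-ness anyway; the correct reasoning is that if $g(C)\leq 1$ then the rank-one foliation descending from the $L^2$ one-forms would be trivialised by a single holomorphic one-form on $X$, whose lift to $\upX$ is bounded but not $L^2$ (since $\pi_1(X)$ is infinite), contradicting the construction. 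Alternatively one argues via the infinite-dimensionality of the $L^2$ space (as in Proposition~\ref{propBergman}) versus the at-most-one-dimensional space of one-forms coming from a genus $\leq 1$ base.
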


For surfaces these arguments show that $L^2$ Hodge number has to be zero:

\begin{theorem} \label{theoremchisurface}
Let $X$ be a compact K\"ahler surface satisfying (H).
Then its Euler characteristic vanishes:
$$\chi(X,\mathcal{O}_X)=0.$$ 
\end{theorem}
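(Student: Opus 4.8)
The plan is to run exactly the $L^2$-computation used in the proof of Theorem \ref{theoremchi}, observing that in the surface case it closes up by itself without any appeal to a Miyaoka-type inequality. By Atiyah's $L^2$-index theorem (Theorem \ref{theoremL2index}) one has
$$\chi(X,\mathcal{O}_X)=\chi_{(2)}(\upX,\mathcal{O}_{\upX})=h^0_{(2)}(\upX,\mathcal{O}_{\upX})-h^0_{(2)}(\upX,\Omega^1_{\upX})+h^0_{(2)}(\upX,\Omega^2_{\upX}),$$
so it suffices to show that each of the three $L^2$ Hodge numbers on the right vanishes. The decisive feature of the surface case, which I would emphasise, is that the top cohomological degree $n=2$ coincides with the canonical degree, i.e. $\Omega^2_{\upX}=K_{\upX}$; this is precisely the term that survives in dimension three and forces the detour through Lemma \ref{lemmachithreefold}.

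For the three vanishings I would argue as follows. In degree zero, since $\upX$ is non-compact (its fundamental domain has infinite volume because $\pi_1(X)$ is infinite) there are no nonzero $L^2$ holomorphic functions, whence $h^0_{(2)}(\upX,\mathcal{O}_{\upX})=0$. In degree two I would invoke Proposition \ref{propBergman} directly: as $\Omega^2_{\upX}=K_{\upX}$, the term $h^0_{(2)}(\upX,\Omega^2_{\upX})=h^0_{(2)}(\upX,K_{\upX})$ is the Bergman $L^2$ Hodge number, which vanishes under (H) because the canonical $L^2$ forms extend across the boundary and identify with the finite-dimensional space $H^0(\barX,K_{\barX})$. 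For the middle term I would argue by contradiction: were $h^0_{(2)}(\upX,\Omega^1_{\upX})\neq0$, the first $L^2$ Betti number of $X$ would be nonzero, so by Gromov's theorem some finite \'etale cover $X'$ of $X$ would admit a fibration onto a curve of genus $g\ge 2$. But $X'$ again satisfies (H) (it has the same universal cover $\upX$ and infinite fundamental group), so Lemma \ref{maptocurve} forbids such a fibration; hence $h^0_{(2)}(\upX,\Omega^1_{\upX})=0$.

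Combining the three vanishings gives $\chi(X,\mathcal{O}_X)=0$ at once. I do not expect any genuine obstacle here: every ingredient has already been established, and the only real content is the observation that in dimension two the Bergman-space vanishing of Proposition \ref{propBergman} kills the middle Hodge number outright, so that the surface statement is in fact cleaner and more self-contained than its threefold counterpart Theorem \ref{theoremchi}.
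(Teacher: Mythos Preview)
Your argument is correct and is exactly the approach the paper indicates: the sentence preceding Theorem \ref{theoremchisurface} (``For surfaces these arguments show that $L^2$ Hodge number has to be zero'') means precisely that the three vanishings $h^0_{(2)}(\sO_{\upX})=h^0_{(2)}(\Omega^1_{\upX})=h^0_{(2)}(K_{\upX})=0$ from the proof of Theorem \ref{theoremchi} already give $\chi(X,\sO_X)=0$ when $n=2$, since $\Omega^2_{\upX}=K_{\upX}$. One small slip: in degree zero you write that ``its fundamental domain has infinite volume'', but the fundamental domain has \emph{finite} volume (it is a copy of the compact manifold $X$); what you mean is that $\upX$ itself has infinite volume, being an infinite union of translates of that domain, so the only $L^2$ holomorphic function is zero.
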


We can now prove our first main statement.

\begin{proof}[Proof of Theorem \ref{theoremclaudon}]
By Corollary \ref{corollaryMMP2} the surface $X$ is a relative minimal model. Thus if $X$ is uniruled it is a ruled surface 
over a curve $C$ of genus at least one. By Lemma \ref{maptocurve} the curve has genus one and we are in case $(1)$ of the theorem.

Suppose now that $X$ is not uniruled, then $X$ is not of general type by the Kobayashi-Ochiai Theorem \ref{KO}.
Since $K_X$ is nef this implies $K_X^2=0$. Since $\chi(X, \sO_X)=0$ by Theorem \ref{theoremchisurface}
we know by Riemann-Roch that
$$
0 = \chi(X, \sO_X)= \frac{1}{12} (K_X^2+c_2(X))= \frac{1}{12} c_2(X).
$$ 
If $\kappa(X)=0$, we know from surface theory that $c_2(X)=0$ implies that $X$ is (up to finite \'etale cover) a torus
and we are in case $(2)$ of the theorem.

Thus we are left to exclude the case $\kappa(X)=1$. We know by \cite[III, Rem.11.5]{BHPV04}
that $c_2(X)=0$ implies that the Iitaka fibration is almost smooth, i.e. the reduction of every singular fibres 
is an elliptic curve. Up to taking a finite \'etale covering the Iitaka fibration $X \rightarrow C$ is smooth over a curve $C$ of genus at most one
(Lemma \ref{maptocurve}), hence locally trivial
by \cite[III,Thm.15.4]{BHPV04}. 
If $C$ is rational, $X$ is uniruled by \cite[V, Thm.5.4]{BHPV04}, a contradiction.
If $C$ is elliptic, $X$ is abelian or hyperelliptic by  \cite[Ch.V, 5.B)]{BHPV04}, a contradiction.
\end{proof}

\begin{remark}
In \cite{Cla10}, $L^2$ theory on $\upX$ was used to rule out the case of $X$ being of general type. 
This can be done more efficiently by Theorem \ref{KO}, but the property $\chi(X,  \sO_X)=0$ 
is still useful for the case $\kappa(X)=0, 1$.
\end{remark}

\subsection{Proof of Theorem \ref{theoremalmostabelian}}

By Corollary \ref{corollarysurjectivityAlbanese} we are left to show 
that the fundamental group is almost abelian.
By Corollary \ref{core} we can assume that $X$ is a special threefold in the sense of Definition \ref{definitionspecial}. 
By \cite[7.2(4)]{Cam04b} this implies that the fundamental group is almost abelian unless (maybe)
$X$ has Kodaira dimension two. Thus we suppose without loss of generality that $\kappa(X)=2$.

By hypothesis $X$ admits a minimal model which by Corollary \ref{corollaryMMP3} is smooth.
Moreover we have $\chi(X, \sO_X)=0$ by Theorem \ref{theoremchi}. Since this property 
as well as the fundamental group are invariant
under the MMP we will suppose without loss of generality that $X$ is a minimal model.

Denote now by $\holom{\varphi}{X}{Y}$
the Iitaka fibration, and let $A$ be an ample $\Q$-divisor on $Y$ such that
$K_X \sim_\Q \varphi^* A$. We claim that $\varphi$ is almost smooth in codimension 
one\footnote{We recall that a fibration is almost smooth in codimension one if there exists a codimension
two subset $Z \subset Y$ such that for all $y \in Y \setminus Z$, the reduction of the
fibre $\fibre{\varphi}{y}$ is smooth.}. 
Let $D \subset Y$ be a general
divisor in $|mA|$ such that $D$ and $X_D :=\fibre{\varphi}{D}$ are smooth.
By Theorem \ref{theoremchi} we know that $\chi(X, \sO_X)=0$.
Using the same computation as in the proof of Lemma \ref{lemmachithreefold} we see that this implies
$c_2(T_{X_D})=0$. Thus the topological Euler characteristic of the minimal elliptic surface $X_D$ is zero.
By \cite[III, Prop.11.4, Rem. 11.5]{BHPV04} this implies that the reduction of all the fibres of $X_D \rightarrow D$ are
smooth elliptic curves. Since $D$ is a sufficiently general ample divisor this implies the claim.
 
Since $\varphi$ is almost smooth, it identifies to the core fibration by Lemma \ref{lemmageneral} below.
Thus the fundamental group is almost abelian by Corollary \ref{core}. $\square$

\begin{lemma} \label{lemmageneral}
Let $X$ be a minimal compact K\"ahler threefold.
Suppose that $\kappa(X)=2$ and denote by $\holom{\varphi}{X}{Y}$
the Iitaka fibration. Suppose that $\varphi$ is almost smooth in codimension one.
Then $(Y, \Delta)$ is of general type and $\varphi$ identifies to the core fibration.
\end{lemma}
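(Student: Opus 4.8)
The plan is to use the canonical bundle formula for the elliptic fibration $\varphi$ to transfer the positivity of $K_X$ to the orbifold base $(Y,\Delta)$, and then to invoke the uniqueness of the core. Since $\kappa(X)=2$, the general fibre of $\varphi$ is an elliptic curve, and as $X$ is minimal we have $K_X\sim_\Q\varphi^*A$ for an ample $\Q$-divisor $A$ on $Y$. By the canonical bundle formula for elliptic fibrations (Kodaira, Kawamata, Fujino--Mori) there is, on a suitable birational model, a decomposition
\[
K_X\sim_\Q\varphi^*\left(K_Y+B_\varphi+M_\varphi\right),
\]
where $B_\varphi$ is the boundary divisor and $M_\varphi$ the moduli part, a nef $\Q$-divisor. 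Since $\varphi$ is almost smooth in codimension one, over every prime divisor $D\subset Y$ the fibre has smooth reduction, so the only codimension one contributions to $B_\varphi$ come from multiple fibres, with coefficient exactly $1-\tfrac{1}{m(\varphi,D)}$; thus $B_\varphi$ coincides with the orbifold divisor $\Delta$ of Definition \ref{definitionorbifold}. Comparing the two expressions for $K_X$ yields the relation $A\sim_\Q K_Y+\Delta+M_\varphi$ on $Y$. It therefore suffices to prove that $M_\varphi\equiv 0$: then $K_Y+\Delta\equiv A$ is numerically ample, hence big, so $\kappa(Y,\Delta)=\dim Y=2$ and $(Y,\Delta)$ is of general type.

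The heart of the argument is the vanishing of the moduli part, which I would establish by testing $M_\varphi$ against a general curve. For $m\gg 0$ choose a general member $D\in|mA|$, so that $D$ is smooth, avoids the (codimension two) locus over which the fibres fail to be almost smooth, and $X_D:=\fibre{\varphi}{D}$ is a smooth surface. Then $\varphi|_{X_D}\colon X_D\to D$ is a relatively minimal elliptic surface (adjunction gives $K_{X_D}\sim_\Q\varphi|_{X_D}^*\big((A+D)|_D\big)$, a pullback from $D$, so fibres contain no $(-1)$-curves), all of whose fibres have smooth reduction. Consequently each fibre has vanishing topological Euler number, so $c_2(X_D)=0$, and since $K_{X_D}^2=0$ one obtains, exactly as in the proof of Lemma \ref{lemmachithreefold}, $\chi(X_D,\mathcal{O}_{X_D})=\tfrac{1}{12}(K_{X_D}^2+c_2(X_D))=0$. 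By compatibility of the canonical bundle formula with restriction to a general complete intersection curve, $M_\varphi|_D$ agrees up to $\Q$-linear equivalence with the moduli part of the elliptic surface $X_D\to D$, whose degree equals $\chi(X_D,\mathcal{O}_{X_D})$. Hence $M_\varphi\cdot A=\tfrac{1}{m}\,M_\varphi\cdot D=0$, and since $M_\varphi$ is nef while $A$ is ample, the Hodge index theorem (equivalently Kleiman's criterion) forces $M_\varphi\equiv 0$.

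Combining the two steps, $K_Y+\Delta\equiv A$ is big on the projective surface $Y$, so $(Y,\Delta)$ is of general type and $\varphi$ is a general type fibration in the sense of Definition \ref{definitiongeneraltype}. Its general fibre is an elliptic curve, hence a manifold of Kodaira dimension zero, which is special (Definition \ref{definitionspecial}). Thus $\varphi$ is simultaneously of general type and has special general fibres, so by the uniqueness statement in Theorem \ref{thmcore} it coincides with the core $c_X$.

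The step I expect to be the main obstacle is the vanishing $M_\varphi\equiv 0$: one must correctly match the threefold moduli part with the classical moduli part of the induced elliptic surface and use almost smoothness precisely enough to kill the Euler number. A secondary technical point is that the base $Y$ may be singular, so the canonical bundle formula and the identification $B_\varphi=\Delta$ should be read on an appropriate smooth birational model; this is harmless, since Definition \ref{definitiongeneraltype} only requires the orbifold base of \emph{some} model of $\varphi$ to be of general type.
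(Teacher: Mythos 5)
Your proof is correct and follows essentially the same route as the paper: both arguments apply the canonical bundle formula to the elliptic Iitaka fibration, use the almost-smoothness hypothesis to identify the divisorial boundary with the orbifold divisor $\Delta$ and to get $c_2(X_D)=0$ for the elliptic surface over a general member $D\in|mA|$, deduce that the moduli term contributes nothing so that $K_Y+\Delta$ is big, and conclude by the uniqueness of the core. The only difference is one of packaging: you kill the moduli part via its nefness in the Fujino--Mori formulation together with the Hodge index theorem, whereas the paper works directly with $\varphi_*K_{X_0/Y_0}$ over an open subset of $Y$ and quotes the numerical triviality of its restriction to a general hyperplane section from \cite[III, Thm.18.2]{BHPV04}.
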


\begin{proof}
Let $Y_0 \subset Y$ be the maximal Zariski open subset such that $Y_0$ is smooth
and $X_0:=\fibre{\varphi}{Y_0} \rightarrow Y_0$ is almost smooth and
the $\varphi$-singular locus is a disjoint union of smooth curves $\cup_i D_i$.
Note that $Y \setminus Y_0$ is a finite set of points.
 
Denote by $m_i$ the multiplicity of the divisor $\varphi^* D_i$, then 
we set $F_i$ for the support of $\varphi^* D_i$ and by the canonical bundle formula \cite[V,Thm.12.1]{BHPV04}
\[
K_{X_0} \simeq \sO_{X_0} (\varphi^* K_{Y_0} + \varphi^* \varphi_* K_{X_0/Y_0} + \sum_i (m_i-1) F_i).
\]
Set now $m := \prod_j m_j$, then we have for all $l \in \N$ 
\[
\varphi_* \sO_{X_0} (l m K_{X_0}) 
\simeq  \sO_{Y_0} (l m K_{Y_0} + l m \varphi_* K_{X_0/Y_0}+ l  \sum_i (m_i-1) \prod_{j \neq i} m_j D_i).
\]
If $j: Y_0 \hookrightarrow Y$ denotes the inclusion,
the isomorphism above extends to   an injective map
\[
\varphi_* \sO_X(l m K_{X}) \rightarrow \sO_Y(l m K_{Y} + l m j_* \varphi_* K_{X_0/Y_0} + l  \sum_i (m_i-1) \prod_{j \neq i} m_j D_i).
\]
Since $K_X$ has Kodaira dimension two, this implies that the $\Q$-Weil divisor
\[
K_{Y} + j_* \varphi_* K_{X_0/Y_0} +  \sum_i \frac{m_i-1}{m_i} D_i
\] 
has Kodaira dimension two.
If $H \subset Y$ is general hyperplane section, then $H \subset Y_0$, so the induced fibration $X_H \rightarrow H$
is almost smooth. By \cite[III, Thm.18.2]{BHPV04} this implies that
$\varphi_* K_{X_0/Y_0}|_H$ is numerically trivial.  Since bigness is a numerical property we see that
$K_{Y} + \sum_i \frac{m_i-1}{m_i} D_i$ is big.

Thus $\varphi$ is a general type fibration onto a surface. Since $X$ is a threefold and not of general type,
$\varphi$ is the core fibration. 
\end{proof}

\section{Threefolds satisfying \Hplus}

\subsection{Geometry of fibrations on $X$.}

\begin{lemma} \label{lemmafibration}
Let $X$ be a compact K\"ahler manifold satisfying \Hplus. 
Suppose that $X$ admits an equidimensional fibration 
\holom{\varphi}{X}{Y} onto a compact K\"ahler manifold $Y$
such that the sequence 
$$
1 \rightarrow \pi_1(F)_X \rightarrow \pi_1(X) \rightarrow \pi_1(Y) \rightarrow 1
$$  
is exact and $\pi_1(F)_X$ is finite.
\begin{enumerate}[(1)]
\item Then $Y$ satisfies \Hplus.
\item Let $\Delta \subset Y$ be the $\varphi$-singular locus, and denote by \holom{\pi_Y}{\upY}{Y} the universal cover of $Y$. 
Let  $\barDelta$ be the Zariski closure of
$\fibre{\pi_Y}{\Delta}$ in some compactification $\upY \subset \barY$.
Then $\barDelta$ is a proper subset of $\barY$.
\end{enumerate}
\end{lemma}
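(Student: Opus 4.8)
\emph{Plan.} The assertions that $Y$ is compact K\"ahler and that $\pi_1(Y)$ is infinite are immediate: $Y$ is K\"ahler by hypothesis, and the given exact sequence together with the finiteness of $\pi_1(F)_X$ forces $\pi_1(Y)\cong\pi_1(X)/\pi_1(F)_X$ to be infinite. So the entire content of (1) is to produce a \emph{K\"ahler} compactification of $\upY$ into which $\upY$ embeds as a Zariski open set. The first step is to lift $\varphi$: since $\upX$ is simply connected the composite $\upX\to X\to Y$ factors through a holomorphic map $\tilde\varphi\colon\upX\to\upY$. I would record that the connected cover $\hat X:=X\times_Y\upY\to X$ corresponds to $\ker(\varphi_*)=\pi_1(F)_X$, so $\pi_1(\hat X)=\pi_1(F)_X$ is finite; hence $\upX\to\hat X$ is a finite \'etale cover, and since $\hat X\to\upY$ has connected fibre $F$ the map $\tilde\varphi$ is a proper, equidimensional fibration whose general fibre is a connected finite \'etale cover $\hat F$ of $F$. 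The crucial gain is that these fibres $\hat F$ are \emph{compact} connected subvarieties of $\upX$, hence of the compactification $\barX$.

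The key construction is the following. Because $\tilde\varphi$ is equidimensional over the smooth base $\upY$, its fibres form an analytic family of cycles and thus a holomorphic classifying map
$$\phi\colon\upY\lra\chow{\barX},\qquad \tilde y\longmapsto[\tilde\varphi^{-1}(\tilde y)],$$
which is injective since distinct fibres are disjoint. This is exactly where \Hplus enters: as $\barX$ is compact K\"ahler, the irreducible components of $\chow{\barX}$ are compact and K\"ahler (by \cite{Ca80,Va86}), so the analytic closure $\barY:=\overline{\phi(\upY)}$ is an irreducible compact K\"ahler variety of dimension $\dim Y$. It remains to show that $\phi$ realises $\upY$ as a Zariski open subset, and I expect this to be the main obstacle. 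The plan is to consider the boundary $B:=\barX\setminus\upX$ and the locus
$$W:=\{\,s\in\barY\ :\ \text{the cycle }Z_s\text{ meets }B\,\},$$
which is a proper closed analytic subset of $\barY$ by Remmert's theorem applied to the proper universal family $\mathcal U_{\barY}\to\barY$ (it is proper since a general member of $\barY$ lies in $\phi(\upY)$ and hence avoids $B$). Every fibre $\hat F_{\tilde y}$ lies in $\upX$, so $\phi(\upY)\subseteq\barY\setminus W$; conversely, if $Z_s\subset\upX$ is a limit of fibres $\hat F_{\tilde y_n}$, then choosing $\tilde x\in\mathrm{supp}(Z_s)$ shows $\tilde y_n=\tilde\varphi(\tilde x_n)$ converges to some $\tilde y\in\upY$, and properness of $\tilde\varphi$ gives $Z_s=\hat F_{\tilde y}$. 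Hence $\phi(\upY)=\barY\setminus W$ is Zariski open; resolving $\barY$ along $W$ (which leaves the smooth open set $\upY$ untouched) yields the smooth compact K\"ahler compactification and proves (1).

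Finally (2) falls out of the same universal family. Keeping $p\colon\mathcal U_{\barY}\to\barY$, let $\mathrm{Disc}\subset\barY$ be the locus over which the fibre of $p$ is singular; this is a closed analytic subset, being the image under the proper map $p$ of the (closed) non-smoothness locus of $p$. It is a \emph{proper} subset, because the fibre over a general point of $\barY$ lies over $\upY$ and is the smooth variety $\hat F$. For $\tilde y\in\upY$ the fibre $\hat F_{\tilde y}$ is an \'etale cover of $F_{\pi_Y(\tilde y)}$, so it is singular whenever $\pi_Y(\tilde y)\in\Delta$; thus $\pi_Y^{-1}(\Delta)\subseteq\mathrm{Disc}$. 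Since $\mathrm{Disc}$ is a closed analytic subset, the Zariski closure obeys $\barDelta\subseteq\mathrm{Disc}\subsetneq\barY$, which is precisely the claim. The only delicate point here is the analyticity and properness of $\mathrm{Disc}$, and both are guaranteed by the properness of $p$ together with the smoothness of the general fibre established in (1).
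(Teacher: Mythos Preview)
Your proof is correct and, for part (2), essentially identical to the paper's. For part (1), however, you take a genuinely different route at the crucial step.

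Both you and the paper construct the classifying map $\phi:\upY\to\chow{\barX}$ and take $\barY$ to be (a compact K\"ahler model of) the closure of its image. The key point is then to show that a cycle $Z_s$ parametrised by $\barY$ which happens to lie in $\upX$ is already a $\tilde\varphi$-fibre. Here the arguments diverge. You take a sequence $\phi(\tilde y_n)\to s$, lift to points $\tilde x_n\to\tilde x\in Z_s$, and use continuity of $\tilde\varphi$ and separatedness of $\barY$ to conclude $s=\phi(\tilde\varphi(\tilde x))$; this is elementary and yields the clean identification $\phi(\upY)=\barY\setminus W$. The paper instead pushes $Z_s$ down to $X$ via $\pi_X$, observes that $[\pi_X(Z_s)]=[\pi_X(\hat F)]$ in $H^*(X)$, and then uses the K\"ahler form on $Y$ (pulled back by $\varphi$) together with equidimensionality to force $\pi_X(Z_s)$ to be a $\varphi$-fibre. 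The paper also inserts an intermediate cohomological step you bypass entirely: after arranging $B=\barX\setminus\upX$ to be a divisor, it uses $c_1(B|_{\Gamma_y})=0$ and the K\"ahler property of $\Gamma_y$ to show that any smooth member not contained in $B$ is actually disjoint from $B$. Your direct definition of $W$ makes this step unnecessary. In short, your approach is more topological and more self-contained; the paper's is more cohomological and exploits the K\"ahler structures on $X$ and $Y$ rather than sequential compactness in the cycle space.

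One point you should tighten: you pass directly from ``$\phi$ is injective with image $\barY\setminus W$'' to ``$\upY$ is a smooth open set of $\barY$''. An injective holomorphic map need not be an open embedding (think of the normalisation of a cusp). The paper handles this by first taking $\barY$ to be the \emph{normalisation} of the relevant component of $\chow{\barX}$ and then invoking Zariski's main theorem to conclude that $c:\upY\hookrightarrow\barY$ is an isomorphism onto its image. You should do the same before resolving along $W$; otherwise the singularities of $\barY$ could lie inside $\phi(\upY)$ and the resolution would alter it. With that fix in place your argument is complete.
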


\begin{proof} 
We fix a compactification $\upX \subset \barX$. Up to replacing $\barX$ by some bimeromorphic model we
can suppose that $\barX \setminus \upX$ is a divisor which we denote by $B$.

Since $\pi_1(F)_X$ is finite and  $\pi_1(X)$ is infinite, the group $\pi_1(Y)$ is infinite.
The exactness of the sequence of fundamental groups implies that the natural morphism \holom{\tilde{\varphi}}{\upX}{\upY}
induced by $\varphi$ between the universal covers is a fibration, the  general $\tilde{\varphi}$-fibre $F$ being an finite \'etale cover of a general $\varphi$-fibre.
The $\tilde{\varphi}$-fibres form a 
$\dim Y$-dimensional analytic family of compact cycles in $\upX \subset \barX$,
so by the universal property of the cycle space $\chow{\barX}$ there exists an injective map $\upY \hookrightarrow \chow{\barX}$.
The normal bundle of $F$ is trivial, so  
\[
h^0(F, N_{F/X}) = \dim Y.
\]  
The tangent space of $\chow{\barX}$ in the point $[F]$ identifies to $H^0(F, N_{F/X})$, 
thus the image of $\upY \hookrightarrow \chow{\barX}$
is contained in a unique irreducible component of $\chow{\barX}$ of dimension at most $\dim Y$. 
Since $\upY \hookrightarrow \chow{\barX}$ is injective, 
the component has dimension $\dim Y$ and we denote by  $\barY$ its normalisation. 
Thus we get an injective map $c: \upY \hookrightarrow \barY$ and since $\barY$ is normal it follows
by Zariski's theorem that it is an isomorphism onto its image. 

{\em Proof of the first statement.}
Let us now show that the image of $c$ is a Zariski open subset of $\barY$,
hence $\barY$ is a compactification of $\upY$.

Denote by $\holom{p}{\Gamma}{\barX}$ and $\holom{q}{\Gamma}{\barY}$ the universal family over
$\barY$.  Consider the following commutative diagram :
\[
\xymatrix{
X  \ar[d]_{\varphi}  & \upX \ar[d]_{\tilde{\varphi}} \ar[l]_{\pi_X} \ar @{^{(}->}[r]  & \barX & \Gamma \ar[l]_{p} \ar[d]_{q}
\\
Y & \upY \ar[l]_{\pi_Y} \ar @{^{(}->}[rr]^c  & & \barY
}
\]
The general $q$-fibre is smooth since the general $\tilde{\varphi}$-fibre is smooth. 
Thus the singular locus of $\Gamma$ does not surject onto $\barY$ 
and we know by generic smoothness that there exists a
Zariski open subset $Y_0 \subset \barY$ such that the fibres $\Gamma_y$ over $Y_0$ 
are smooth. Moreover there exist $q$-fibres that do not meet the boundary $B$ (e.g. the $\tilde{\varphi}$-fibres),
so up to replacing $Y_0$ by a Zariski open subset we can assume that it parametrizes compact K\"ahler manifolds
that are not contained in the boundary $B$. 
We claim that this implies $\Gamma_y \subset \upX$ for every $y \in Y_0$.

Indeed if $F$ is a general $\tilde{\varphi}$-fibre, then $F \subset \upX$ so $F$ is disjoint from $B$. 
Since the $\Gamma_y$ are deformations of $F$, we obtain
$c_1(B|_{\Gamma_y})=0$.
Since $\Gamma_y$ is not contained in $B$, the intersection $B \cap \Gamma_y$ defines a non-zero global section of
the line bundle $B|_{\Gamma_y}$. The manifold $\Gamma_y$ being K\"ahler the condition $c_1(B|_{\Gamma_y})=0$ 
implies that the section
does not vanish. This proves the claim.

Since $\Gamma_y \subset \upX$ for every $y \in Y_0$,  we can consider $\pi_X(\Gamma_y)$ which is a compact cycle 
of dimension $\dim F$ in $X$. 
Moreover $\pi_X(\Gamma_y)$ has the same cohomology class as $\pi_X(F)$ which is
a (multiple of a) general $\varphi$-fibre.  
Thus if $\omega_Y$ is a K\"ahler form on $Y$, we have
\[
[\pi_X(\Gamma_y)] \cdot \varphi^* [\omega_Y] = [\pi_X(F)] \cdot \varphi^* [\omega_Y] = 0,
\]
so $\pi_X(\Gamma_y)$ is contained in a fibre of the equidimensional map $\varphi$.
Since $[\pi_X(\Gamma_y)]=[\pi_X(F)]$ and $X$ is K\"ahler, the support of $\pi_X(\Gamma_y)$ is a $\varphi$-fibre.
Thus $\Gamma_y$ is a $\tilde{\varphi}$-fibre in particular it is parametrised by $\upY$. 
This shows the first statement.  

{\em Proof of the second statement.} The image of $\fibre{\pi_Y}{\Delta}$
under the embedding $c: \upY \hookrightarrow \barY$ is contained
in the $q$-singular locus, i.e. the analytic subset of $\barY$ parametrising singular cycles. Since
the general $q$-fibre is smooth, this locus is a Zariski closed proper subset of $\barY$.
\end{proof}

\begin{corollary} \label{corollaryfibrationone}
Let $X$ be a compact K\"ahler manifold satisfying \Hplus. 
Suppose that $X$ admits a fibration 
\holom{\varphi}{X}{Y} onto a curve $Y$ such that for a general fibre $F$ the group
$\pi_1(F)_X$  is finite. 
Then (up to finite \'etale cover) the fibration \holom{\varphi}{X}{Y} 
is a smooth map with simply connected fibres onto an elliptic curve.

Moreover, up to replacing $\barX$ by a suitable bimeromorphic model, the induced fibration 
$\holom{\tilde{\varphi}}{\upX}{\C}$ extends to a fibration $\holom{\overline{\varphi}}{\barX}{\PP^1}$
such that we have a commutative diagram
\[
\xymatrix{
X  \ar[d]_{\varphi}  & \upX \ar[d]_{\tilde{\varphi}} \ar[l]_{\pi_X} \ar @{^{(}->}[r]  & \barX  \ar[d]_{\overline{\varphi}}
\\
Y & \C \ar[l]_{\pi_Y} \ar @{^{(}->}[r]  & \PP^1
}
\]
\end{corollary}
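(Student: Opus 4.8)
The plan is to let Lemma \ref{lemmafibration} carry the structural work, then read smoothness off its second statement, and treat the fibre topology and the extension of $\tilde{\varphi}$ separately. First I would fix the base. Since the image is a curve, Lemma \ref{maptocurve} bounds its genus by one, and after a finite \'etale cover the sequence $1\to\pi_1(F)_X\to\pi_1(X)\to\pi_1(Y)\to1$ becomes exact (fibration over a curve). Then $\pi_1(Y)\simeq\pi_1(X)/\pi_1(F)_X$ is infinite, because $\pi_1(X)$ is and $\pi_1(F)_X$ is finite; hence $Y$ is elliptic and $\upY=\C$. A fibration onto a curve is automatically equidimensional, so all hypotheses of Lemma \ref{lemmafibration} hold: $Y$ satisfies \Hplus and we obtain the universal family $p\colon\Gamma\to\barX$, $q\colon\Gamma\to\barY$ with $\upY\hookrightarrow\barY$ a Zariski open immersion. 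As $\barY$ is the normalisation of a one-dimensional component of $\chow{\barX}$ it is a smooth compactification of $\C$, and simple connectedness of $\C$ forces $\barY\simeq\PP^1$ with $\C=\PP^1\setminus\{\infty\}$.

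The conceptual crux is then immediate from Lemma \ref{lemmafibration}(2). Let $\Delta\subset Y$ be the (finite) $\varphi$-singular locus; if $\Delta\ne\emptyset$, its preimage $\fibre{\pi_Y}{\Delta}$ is an infinite discrete subset of $\upY=\C$, whose Zariski closure $\barDelta$ in $\barY=\PP^1$ is all of $\PP^1$. This contradicts $\barDelta\subsetneq\barY$, so $\Delta=\emptyset$ and $\varphi$, hence the induced $\tilde{\varphi}\colon\upX\to\C$, is a smooth proper fibration. For the fibre I would invoke Ehresmann: $\tilde{\varphi}$ is a $C^\infty$-bundle, so the homotopy sequence of $\tilde F\hookrightarrow\upX\to\C$ together with $\pi_2(\C)=0$ and $\pi_1(\upX)=1$ shows the $\tilde{\varphi}$-fibre $\tilde F$ is simply connected. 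Since $\tilde F\to F$ is a connected finite \'etale cover of the $\varphi$-fibre, it is the universal cover of $F$, whence $\pi_1(F)\simeq\pi_1(F)_X$ is finite; as $\pi_1(X)$ is then almost abelian and residually finite, a further finite \'etale cover associated to a finite-index subgroup meeting $\pi_1(F)_X$ trivially produces simply connected fibres over an elliptic curve.

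Finally I would extend $\tilde{\varphi}$. Over $\C$ the cycles $\Gamma_y$ are precisely the $\tilde{\varphi}$-fibres, which partition $\upX$; thus $p$ is generically injective, and since $\dim\Gamma=\dim\barX$ with both irreducible, $p$ is bimeromorphic. Then $\overline{\varphi}:=q\circ p^{-1}\colon\barX\dashrightarrow\PP^1$ is meromorphic and restricts to $\tilde{\varphi}$ on $\upX$, because for $x\in\upX$ one has $p^{-1}(x)=(\tilde{\varphi}(x),x)$. Its indeterminacy locus sits in the boundary $B=\barX\setminus\upX$, so resolving it by blow-ups with centres in $B$ leaves $\upX\subset\barX$ and the K\"ahler condition intact and makes $\overline{\varphi}$ a morphism; it is proper, surjects onto $\PP^1$, and has connected general fibre $\tilde F$, hence by Stein factorisation over the normal base $\PP^1$ it is a fibration fitting into the stated diagram. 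I expect this last step to be the main obstacle: one must check carefully that $p$ is genuinely bimeromorphic and that resolving $\overline{\varphi}$ does not disturb the open part $\upX$, whereas smoothness and ellipticity are handed to us almost for free by Lemma \ref{lemmafibration}.
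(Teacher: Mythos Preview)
Your proof is correct and follows essentially the same approach as the paper. Two minor differences are worth flagging. For simple connectedness of the fibres, the paper first arranges $\pi_1(X)\simeq\Z^{\oplus 2}$ (via \cite[Lemme~A.0.1]{Cla07}) so that $\pi_1(X)\to\pi_1(Y)$ is an isomorphism, and then applies the homotopy sequence directly to $\varphi\colon X\to Y$; your route via Ehresmann on $\tilde\varphi\colon\upX\to\C$ followed by a further residually-finite cover also works but is slightly more roundabout. For the extension, the paper shows by an intersection-number computation ($[\Gamma_\infty]\cdot F=F^2=0$, with all $\tilde\varphi$-fibres irreducible) that $\Gamma_\infty\cap\upX=\emptyset$, so one may simply replace $\barX$ by (a resolution of) $\Gamma$; your assertion that $p^{-1}(x)=(\tilde\varphi(x),x)$ for every $x\in\upX$ is not justified as written, since you have not excluded $x\in\Gamma_\infty$. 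Your conclusion is nevertheless valid: $q\circ p^{-1}$ agrees with the holomorphic map $\tilde\varphi$ on a dense open subset of $\upX$, hence coincides with it as a meromorphic map on all of $\upX$, so the indeterminacy locus of $\overline\varphi$ lies in $B$.
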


\begin{proof}
After finite \'etale cover the sequence 
$$
1 \rightarrow \pi_1(F)_X \rightarrow \pi_1(X) \rightarrow \pi_1(Y) \rightarrow 1
$$  
is exact. Thus by Lemma \ref{lemmafibration}(1) the curve satisfies \Hplus, so it is an elliptic curve.
This implies that $\pi_1(X)$ is almost abelian of rank two \cite[Lemme A.0.1]{Cla07}, so up to finite \'etale cover, $\pi_1(X) \simeq \Z^{\oplus 2}$
and the map $\pi_1(X) \rightarrow \pi_1(Y)$ is an isomorphism.

The $\varphi$-singular locus $\Delta$ is empty, since otherwise by 
Lemma \ref{lemmafibration}(2) the infinite set
$\fibre{\pi_Y}{\Delta}$ is contained in 
a Zariski closed proper subset of some compactification $\upY \subset \barY$.

Recall that by Ehresmann's theorem the smooth proper map $\varphi$ is a topological fibre bundle.
Thus we have an exact sequence of homotopy groups
$$
\ldots \rightarrow \pi_2(Y) \rightarrow \pi_1(F) \rightarrow \pi_1(X) \rightarrow \pi_1(Y) \rightarrow 1,
$$
where $F$ is a fibre. It is well-known that the second homotopy group
of an elliptic curve is trivial, so we have an injection $\pi_1(F) \hookrightarrow \pi_1(X)$. 
Since $\pi_1(X) \rightarrow \pi_1(Y)$ is an isomorphism, the fibre $F$ is simply connected.

Let us now prove that the fibration 
$\holom{\tilde{\varphi}}{\upX}{\C}$ extends to a fibration $\holom{\overline{\varphi}}{\barX}{\PP^1}$.
We start with any compact K\"ahler compactification $\upX \subset \barX$.
Note first that by Picard's theorem the image of the injective map $\upY \simeq \C \hookrightarrow \PP^1 \simeq \barY \subset \chow{\barX}$
is surjective onto $\PP^1 \setminus \infty$. 
Denote by $\holom{p}{\Gamma}{\barX}$ and $\holom{q}{\Gamma}{\barY}$ the universal family over
$\barY$, then $p$ is a bimeromorphic morphism and we claim that the exceptional locus of $p$ does not meet $\upX$.
Equivalently we claim that the cycle $\Gamma_\infty$ does not meet any of the $\tilde{\varphi}$-fibres $F$ : indeed 
$\Gamma_\infty$ is a deformation of $F$, so
$$
[\Gamma_\infty] \cdot F = F^2 = 0.
$$
Since all the $\tilde{\varphi}$-fibres $F$ are irreducible, this implies that $F \cap \Gamma_\infty = \emptyset$
or $F \subset \Gamma_\infty$. Yet the second case is not possible, since the cohomology class of $\overline{\Gamma_\infty \setminus F}$
is zero.
\end{proof}

\begin{corollary} \label{corollaryfibrationtwo}
Let $X$ be a compact K\"ahler manifold satisfying \Hplus. 
Suppose that $X$ admits an equidimensional fibration 
\holom{\varphi}{X}{Y} onto a smooth compact K\"ahler surface $Y$ such that 
 the sequence 
$$
1 \rightarrow \pi_1(F)_X \rightarrow \pi_1(X) \rightarrow \pi_1(Y) \rightarrow 1
$$  
is exact and $\pi_1(F)_X$ is finite. 
Then (up to finite \'etale cover) the following holds:
\begin{enumerate}[(1)]
\item $Y$ is a torus or a ruled surface over an elliptic curve.
\item If $Y$ is a torus, the $\varphi$-singular locus is empty and the general fibre is simply connected.
\item If $Y$ is a ruled surface,  the $\varphi$-singular locus is a disjoint union of sections of the ruling.
\end{enumerate}
\end{corollary}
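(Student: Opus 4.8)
The plan is to reduce everything to the classification of K\"ahler surfaces satisfying (H) (Theorem \ref{theoremclaudon}) together with the two parts of Lemma \ref{lemmafibration}; the genuinely new input is a covering-space / Euler-characteristic analysis of the components of the singular locus pulled back to the universal cover.

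First, part (1) is immediate: the hypotheses of the corollary are exactly those of Lemma \ref{lemmafibration}, so by Lemma \ref{lemmafibration}(1) the surface $Y$ satisfies \Hplus, hence (H), and Theorem \ref{theoremclaudon} gives, up to finite \'etale cover, that $Y$ is either a torus (with $\upY = \C^2$) or a ruled surface over an elliptic curve $E$ (with $\upY = \C \times \PP^1$). From now on I replace $X$ by the corresponding fibre-product finite \'etale cover and work with these explicit universal covers. I also record once and for all the reduction that will feed the statements about fibres: since $\pi_1(F)_X = \ker(\pi_1(X)\to\pi_1(Y))$ is finite and normal and $\pi_1(X)$ is virtually abelian (an extension of $\pi_1(Y)$ by a finite group), hence residually finite, a further finite \'etale cover makes this kernel trivial, so $\pi_1(X)\cong\pi_1(Y)$.

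For the torus case (2), write $\Delta\subset Y$ for the $\varphi$-singular locus and let $D\subset\Delta$ be an irreducible curve with normalisation $\hat D$; set $\tilde D:=\pi_Y^{-1}(D)\subset\C^2$, which is an unramified covering of $D$. By Lemma \ref{lemmafibration}(2) the Zariski closure $\barDelta\subset\barY$ is a proper analytic subset of the compact surface $\barY$, so it has only finitely many irreducible components. I would then split on the rank of $H:=\mathrm{im}(\pi_1(\hat D)\to\pi_1(Y)=\Z^4)$. If $H$ has infinite index, then $\tilde D$ has infinitely many connected components, producing infinitely many components of $\barDelta$ --- impossible. If $H$ has finite index, then $H$ has rank $4$, forcing $g(\hat D)\ge 2$; each connected component $\tilde D_0$ of $\tilde D$ is then an \emph{infinite} connected covering of a curve of negative Euler characteristic, hence a non-compact surface with infinitely generated fundamental group. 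This contradicts the fact that $\tilde D_0$ is a Zariski open subset (the complement of finitely many points) of the compact curve $\barDelta$. Thus $\Delta$ has no divisorial part, and since for the fibrations in play the singular locus is pure of codimension one, $\Delta=\emptyset$. Then $\varphi$ is smooth and proper, so by Ehresmann a fibre bundle, and the homotopy sequence together with $\pi_2$ of a torus being zero and $\pi_1(X)\cong\pi_1(Y)$ shows $\pi_1(F)=1$.

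Finally part (3): now $\upY=\C\times\PP^1$ and the ruling $\rho:Y\to E$ lifts to the first projection. The same dichotomy applies to an irreducible component $D\subset\Delta$, with $\pi_1(Y)=\Z^2$: a component lying in fibres of $\rho$ would be a $\PP^1$ and give infinitely many components of $\barDelta$, while a dominating component with $g(\hat D)\ge 2$ would again produce a $\tilde D_0$ with infinitely generated $\pi_1$. The only surviving case is $\chi(\hat D)=0$, i.e. $g(\hat D)=1$ and, by Riemann--Hurwitz, $\rho|_D:D\to E$ an \emph{unramified} cover of elliptic curves. After a finite \'etale base change by a common isogeny dominating all these covers, every component of $\Delta$ becomes a section of the ruling. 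The step I expect to be the main obstacle is the remaining assertion that these sections are pairwise \emph{disjoint}: the natural approach is to lift them to graphs of $\Z^2$-equivariant holomorphic maps $\C\to\PP^1$ inside $\C\times\PP^1$ and to exclude crossings using the properness of $\barDelta$ together with a Picard/rigidity argument --- exactly the kind of control on families of rational curves on $X$ singled out in the introduction.
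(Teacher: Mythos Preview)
Your argument for part (1) and the reduction to $\pi_1(X)\cong\pi_1(Y)$ are fine and match the paper. Your treatment of the one-dimensional components of $\Delta$ is genuinely different from the paper's: you force $g(\hat D)\geq 2$ from the rank of the image and then invoke the fact that a nontrivial infinite-index normal subgroup of a surface group of genus $\geq 2$ is infinitely generated, contradicting that $\tilde D_0$ is Zariski open in a compact curve. The paper instead uses the bounded-holomorphic-function trick of Lemma \ref{maptocurve} applied to the compactified component $\tilde D_0\subset\barY$ to conclude directly that $g(\hat C)\leq 1$, and then compares ranks. Both routes work for curves in $\Delta$; yours is more topological, theirs more function-theoretic.

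There are, however, two genuine gaps. First, your sentence ``since for the fibrations in play the singular locus is pure of codimension one'' is unjustified: nothing in the hypotheses forces $\Delta$ to be purely one-dimensional, and the paper does \emph{not} assume this. One must exclude isolated points of $\Delta$ separately. The paper's argument (and the one you are missing) is short: if $p\in\Delta$ were isolated, the infinite discrete set $\pi_Y^{-1}(p)$ sits inside $\barDelta$; the zero-dimensional components of $\barDelta$ are finite, and any one-dimensional component $C$ of $\barDelta$ satisfies $C\cap\upY\subset\pi_Y^{-1}(\Delta)$, which (after you have killed the curve part of $\Delta$) is discrete --- forcing $C\subset\barY\setminus\upY$ and leaving only finitely many points of $\barDelta$ in $\upY$, a contradiction.

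Second, the disjointness (and smoothness) of the components in part (3), which you flag as the ``main obstacle'' and propose to attack via a Picard/rigidity argument on graphs in $\C\times\PP^1$, has a much simpler proof that you overlook. The compact analytic set $\barDelta\subset\barY$ has only finitely many singular points; but if two components of $\Delta$ met, or if a single component were singular, the preimage under the infinite cover $\pi_Y$ would produce infinitely many singular points of $\pi_Y^{-1}(\Delta)\subset\barDelta$. This one observation gives smoothness of each component and pairwise disjointness simultaneously, and also shows $\hat D=D$, so your Riemann--Hurwitz step then directly yields that each component is an elliptic curve \'etale over $E$.
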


\begin{proof}
By Lemma \ref{lemmafibration}(1) the surface $Y$ satisfies \Hplus, so (1) is a consequence of Theorem \ref{theoremclaudon}. 

We fix a compactification $\upY \subset \barY$ and denote by $\Delta$ the $\varphi$-singular locus. 
By Lemma \ref{lemmafibration} the analytic variety $\fibre{\pi_Y}{\Delta}$ is contained in a proper Zariski closed subset $\barDelta$ 
of $\barY$. In particular $\barDelta$ has finitely many irreducible components. Denote by
$\Delta_1$ the one-dimensional irreducible components of $\Delta$, then $\Delta_1$ is smooth,
i.e. all its irreducible components are smooth and do not intersect. Indeed the cover $\fibre{\pi_Y}{\Delta} \rightarrow \Delta$
is infinite, but  the compact variety $\barDelta$ has at most finitely many singularities.
This also shows that $\Delta$ does not have connected components of dimension zero.  
Otherwise $\fibre{\pi_Y}{\Delta \setminus \Delta_1}$ would be a  countable non-empty set
contained in the compact analytic set $T:=\barDelta \setminus \fibre{\pi_Y}{\Delta_1}$.
Thus $T$ contains a curve, hence $\Delta$ has a one-dimensional component not contained in $\Delta_1$.

Let now  $C \subset \Delta$ be an irreducible component.
By what precedes  \fibre{\pi_Y}{C} has finitely many irreducible components, so the index of $\pi_1(C)_Y$ is finite. 
By Lemma \ref{maptocurve} the curve $C$ is not of general type, so $\pi_1(C)_Y$ is abelian of rank at most two.
In particular if $\Delta \neq \emptyset$, then $\pi_1(Y)$ is almost abelian of rank two. 

Thus $\Delta$ must be empty if $Y$ is a two-dimensional torus and as in the proof of Corollary \ref{corollaryfibrationtwo}
we see that the $\varphi$-fibres are simply connected.

Since the fundamental group of a ruled surface over an elliptic curve is abelian of rank two, we also obtain that $\Delta$
is a union of disjoint elliptic curves if $Y$ is ruled. These curves are finite \'etale covers of the base of the ruling, so up to
finite \'etale base change they are sections.
\end{proof}

For small irregularity we can now give a stronger version of Corollary \ref{corollarysurjectivityAlbanese}.

\begin{proposition}\label{prop-small-irregularity}
Let $X$ be a compact K\"ahler manifold of dimension $n$ 
satisfying \Hplus and with a free abelian fundamental group. If $q(X)\le2$, the Albanese map of $X$ is a smooth map with simply connected fibres.
\end{proposition}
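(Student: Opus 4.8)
The plan is to apply the fibration corollaries of the previous section to the Albanese map $\holom{\alpha}{X}{A}$, $A:=\mathrm{Alb}(X)$, after using the free abelianity of $\pi_1(X)$ to pin down the fibre. First I would record that, by Corollary \ref{corollarysurjectivityAlbanese}, $\alpha$ is surjective with connected fibres and trivial orbifold divisor, so that the homotopy sequence
$$1 \rightarrow \pi_1(F)_X \rightarrow \pi_1(X) \rightarrow \pi_1(A) \rightarrow 1$$
is exact. Since $\pi_1(X)$ is free abelian it coincides with $H_1(X,\Z)$ and is torsion free, so the Albanese morphism identifies it with $\pi_1(A)$; hence $\pi_1(X)\rightarrow\pi_1(A)$ is an isomorphism and $\pi_1(F)_X$ is trivial. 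In particular $q(X)\in\{1,2\}$, being positive because $\pi_1(X)$ is infinite. The point of the hypothesis is that, as $\pi_1(X)\to\pi_1(A)$ is \emph{already} an isomorphism, none of the arguments below will require passing to a finite \'etale cover.

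If $q(X)=1$ then $A$ is an elliptic curve and $\alpha$ is a fibration onto a curve with $\pi_1(F)_X$ finite; Corollary \ref{corollaryfibrationone} then applies verbatim and yields that $\alpha$ is a smooth map with simply connected fibres. So the substance is the case $q(X)=2$, where $A$ is a two-dimensional torus. Here I would like to invoke Corollary \ref{corollaryfibrationtwo}: since its base $A$ is a torus we land in case (2), which gives precisely that the $\alpha$-singular locus is empty and the general fibre is simply connected, i.e.\ that $\alpha$ is smooth with simply connected fibres. The only hypothesis of that corollary not yet available is that $\alpha$ be \emph{equidimensional}, so the whole case $q(X)=2$ reduces to proving equidimensionality of the Albanese map.

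To prove this I would argue by contradiction. If $\alpha$ is not equidimensional then, $A$ being a surface, some fibre carries an irreducible component $D$ of dimension $n-1$ with $\alpha(D)$ a point. As $D$ is contracted, $\pi_1(D)_X$ lies in $\ker(\pi_1(X)\to\pi_1(A))=1$ and is in particular finite, so Lemma \ref{lemmadivisor} gives $D|_D\equiv 0$. I would then cut down to a surface: choosing a surface $S\subset X$ for which $\alpha|_S$ is generically finite onto $A$ and which meets $D$ in a non-empty curve $\Gamma$, the curve $\Gamma$ is contracted to a point by $\alpha|_S$. On the one hand $\Gamma\subset D$ forces $\Gamma^2_S=\deg_\Gamma\big(\mathcal{O}_X(D)|_\Gamma\big)=\big(D|_D\big)\cdot\Gamma=0$; on the other hand, writing $\omega_A$ for a Kähler form on $A$, the class $\alpha|_S^*\omega_A$ is nef on the Kähler surface $S$, has positive self-intersection (because $\alpha|_S$ is generically finite), and satisfies $\alpha|_S^*\omega_A\cdot\Gamma=0$, so the Hodge index theorem forces $\Gamma^2_S<0$. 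This contradiction shows that no divisor is contracted, hence $\alpha$ is equidimensional and we may conclude as above.

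The step I expect to be the main obstacle is the construction of the auxiliary surface $S$. In the projective case one takes $S$ to be a general complete intersection of hyperplane sections, for which $\alpha|_S$ is automatically generically finite; but in the genuinely Kähler situation $A$ may carry no curves and $X$ no ample divisors, so a transverse multisection is not available for free. I expect to supply it using Hypothesis \Hplus exactly in the spirit of Lemma \ref{lemmafibration}: the general fibres of the induced map $\holom{\tilde{\alpha}}{\upX}{\C^2}$ form a two-dimensional family of compact cycles in a compactification $\upX\subset\barX$, and the compactness of the cycle space $\chow{\barX}$ produces both a compactification of the base and enough deformations of the fibre to extract the required surface $S$ (equivalently, to show that the limit of the fibres over the point $\alpha(D)$ cannot acquire the divisorial component $D$). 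Making this cycle-space construction precise, and checking that the resulting $S$ is smooth with $\alpha|_S$ generically finite, is the delicate part of the argument.
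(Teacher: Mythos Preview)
Your overall strategy matches the paper's exactly: use Corollary \ref{corollarysurjectivityAlbanese} to identify $\pi_1(X)\simeq\pi_1(A)$, dispose of $q(X)=1$ via Corollary \ref{corollaryfibrationone}, and for $q(X)=2$ reduce to showing that $\alpha$ is equidimensional so that Corollary \ref{corollaryfibrationtwo} applies. The contradiction in the equidimensionality step is also the same: a contracted divisor $D$ has trivial $\pi_1(D)_X$, so Lemma \ref{lemmadivisor} forces $D|_D\equiv 0$, and one must show this is impossible.

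The divergence is in how you show $D|_D\not\equiv 0$. You cut down to an auxiliary surface $S$ meeting $D$ in a curve $\Gamma$ and apply the classical Hodge index theorem on $S$. As you yourself note, constructing such an $S$ with $\alpha|_S$ generically finite is genuinely problematic in the non-projective K\"ahler setting (for instance $A$ may contain no curves at all), and your proposed cycle-space workaround via \Hplus is vague and does not obviously produce a multisection. The paper bypasses this entirely by working directly on $X$: for a K\"ahler class $\omega$ on $X$ the quadratic form $(\beta,\gamma)\mapsto \omega^{n-2}\cdot\beta\cdot\gamma$ on $H^{1,1}(X,\R)$ has signature $(1,h^{1,1}-1)$ by the Hodge--Riemann relations; the class $\alpha^*\omega_A$ has positive square (since $\alpha$ is surjective onto the surface $A$) and $D$ is orthogonal to it (since $\alpha(D)$ is a point), while $\omega^{n-1}\cdot D>0$ shows $D\neq 0$ in cohomology. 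Hence $\omega^{n-2}\cdot D^2<0$, i.e.\ $\omega^{n-2}|_D\cdot D|_D<0$, so $D|_D$ is not numerically trivial. This is the ``well-known fact'' the paper invokes (its footnote gives the projective version), and it needs neither an auxiliary surface nor any further use of \Hplus. Replacing your surface-cutting paragraph with this Hodge--Riemann argument makes the proof complete and essentially identical to the paper's.
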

\begin{proof}
By Corollary \ref{corollarysurjectivityAlbanese} the Albanese map  \holom{\alpha}{X}{A} is a fibration which induces an isomorphism between the fundamental groups. The case $q(X)=1$ is then covered by Corollary \ref{corollaryfibrationone}. In view of Corollary \ref{corollaryfibrationtwo}, to prove the case $q(X)=2$ it is sufficient to show that $\alpha$ is equidimensional. We argue by contradiction, then $\alpha$ maps a divisor $D$ onto a point $\alpha(D)$.
But this implies that\footnote{Let us briefly explain this well-known 
fact in the case where $A$ is projective. Let $H$ be an effective divisor passing through $\alpha(D)$, then
we can write $\alpha^* H=H'+D$ with $D \not\subset \mbox{supp} H'$ but $D \cap H' \neq 0$.
Since $\alpha^* H \cdot D = 0$ we have
\[
\omega^{n-2} \cdot (\alpha^* H)^2 = \omega^{n-2} \cdot \alpha^* H \cdot (H' + D) = \omega^{n-2} \cdot (H')^2 + \omega^{n-2} \cdot H' \cdot D,
\]
and developing the left hand side implies $\omega^{n-2} \cdot H' \cdot D = - \omega^{n-2} \cdot D^2$.
Since $H' \cap D$ is an effective non-zero cycle, we obtain the claim.}
\[
\omega^{n-2}|_D \cdot D|_D = \omega^{n-2} \cdot D^2 \neq 0
\]
for some K\"ahler form $\omega$ on $X$.
In particular $D|_D$ is not numerically trivial.
Since $\pi_1(X) \simeq \pi_1(A)$ the set 
$\fibre{\pi_X}{D}$ is a union of 
infinitely many disjoint divisors $D_j$ mapped isomorphically onto $D$. In particular 
$\pi_1(D)_X$ is trivial, a contradiction to Lemma \ref{lemmadivisor}.
\end{proof}

\subsection{MMP for compact K\"ahler threefolds} \label{subsectionmori}

While the MMP for projective threefolds has been completed by the work of Kawamata, Koll\'ar, Miyaoka, Mori
and others more than twenty years ago, we do not have a complete MMP for compact K\"ahler threefolds.
Nevertheless the work of Campana and Peternell \cite{CP97, Pet98, Pet01} provides important tools
which together with the assumption \Hplus imply the existence of minimal models.

Let $X$ be a compact K\"ahler manifold. An {\it elementary contraction} is defined to be a surjective map
with connected fibres $\holom{\varphi}{X}{X'}$ onto a normal complex variety
such that $-K_X$ is relatively ample and $b_2(X)=b_2(X')+1$.
In general $X'$ might not be a K\"ahler variety, in particular $\varphi$ is not necessarily
a contraction of an extremal ray in the cone ${\overline {NE}}(X).$ 
Recall that a compact K\"ahler manifold $X$ is simple if there is no proper compact subvariety through a
very general point of $X$. It is Kummer, if $X$ is bimeromorphic to a quotient $T /G$ where $T$ 
is a torus and $G$ a finite group acting on $T$ .

\begin{theorem} \cite[Thm.1, Thm.2]{Pet01}, \cite[Main Thm.]{Pet98} \label{theorempeternell}
Let $X$ be a  smooth non-algebraic compact K\"ahler threefold.
Assume that $X$ is not both simple and non-Kummer.

I. If $K_X$ is nef, it is semiample.

II. If $K_X$ not nef, $X$ has an elementary contraction $\holom{\varphi}{X}{X'}$.

The contraction is of one of the following types.
\begin{enumerate}[(1)]
\item $\varphi $ is a $\PP_1$- bundle or a conic bundle over
a smooth non-algebraic surface,
\item $\varphi$ is bimeromorphic contracting an irreducible divisor
$E$ to a point, and $E$ together with its normal bundle $N_{E/X}$ is one of the
following $$(\PP_2,\sO_{\PP^2}(-1)), (\PP_2,\sO_{\PP^2}(-2)), (\PP_1 \times \PP_1, \sO_{\PP^1 \times \PP^1}(-1,-1)), (Q_0,\sO_{Q_0}(-1)),$$
where $Q_0$ is the quadric cone, 
\item $X'$ is smooth and $\varphi$ is the blow-up of $X'$ along a
smooth curve.
\end{enumerate}
The variety $X'$ is (a possibly singular) K\"ahler space in all cases except possibly (3). 
\end{theorem}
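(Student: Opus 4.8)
The plan is to treat the two parts separately, in each case reducing to the corresponding statement of projective Mori theory but transporting the argument across the Barlet cycle space; the hypothesis that $X$ is not simultaneously simple and non-Kummer is precisely what guarantees the supply of compact cycles that makes this transport possible.

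For Part I, I would run an abundance argument organised by the numerical dimension $\nu(X)$ of the nef divisor $K_X$. If $\nu(X)=3$ then $K_X$ is nef and big, so $X$ is Moishezon; a Moishezon K\"ahler manifold is projective, contradicting non-algebraicity, and this case is therefore empty. If $\nu(X)=0$ then $K_X\equiv 0$, and after passing to a finite \'etale cover the Beauville--Bogomolov decomposition of Ricci-flat K\"ahler manifolds forces $K_X$ to be torsion, hence trivially semiample. The genuinely analytic cases are $\nu(X)\in\{1,2\}$: here I would adapt the abundance arguments of Kawamata and Miyaoka, replacing projectivity by Demailly--Peternell positivity of the cotangent bundle and by a direct study of the Iitaka map of $\vert mK_X\vert$. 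It is exactly in these cases that the non-simple/Kummer hypothesis is needed, since it supplies either a positive-dimensional algebraic reduction or a covering family of proper subvarieties along which to compute, ruling out the one configuration that obstructs abundance in general. The conclusion is $\kappa(X)=\nu(X)$, which is equivalent to semiampleness of the nef $K_X$.

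For Part II, I would first produce rational curves: since $K_X$ is not nef there is an irreducible curve $C_0$ with $K_X\cdot C_0<0$, and Mori's bend-and-break — which takes place on the surface swept out by deformations of $C_0$ and hence survives in the K\"ahler category — yields a rational curve $\ell$ with $0<-K_X\cdot\ell\le 4$. Next I would construct the elementary contraction geometrically, since the Kawamata--Shokurov base-point-free theorem is unavailable: one selects a minimal covering family of such rational curves, forms the associated equivalence relation, and realises the quotient inside the cycle space $\Chow{X}$, whose compactness is the K\"ahler input. Minimality of the family gives that the resulting $\varphi\colon X\to X'$ is an elementary contraction (relatively ample $-K_X$, Picard number dropping by one). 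Finally I would classify $\varphi$ by fibre dimension and a local analysis of its exceptional set, exactly as in Mori's projective classification: a fibre-type contraction produces a $\PP^1$- or conic bundle over a surface (case~1), the base being non-algebraic because otherwise a bundle over a projective surface would make $X$ algebraic; a divisorial contraction to a point pins $(E,N_{E/X})$ down to the four listed pairs by the adjunction computation (case~2); and a divisorial contraction to a curve is forced to be the smooth blow-up with $X'$ smooth (case~3).

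The main obstacle lies entirely in Part II and is twofold. First, constructing the contraction morphism itself and verifying that the quotient $X'$ is an honest normal compact complex space must be done by cycle-space and relative-deformation techniques rather than by a cone-and-contraction theorem. Second, and most delicate, is showing that $X'$ is K\"ahler in cases~(1) and~(2): pushing a K\"ahler class forward through a non-projective fibre-type or bimeromorphic map is not automatic, and this is exactly where the hypothesis that $X$ is not simple-and-non-Kummer — hence carries enough compact cycles to control the cone of curves — becomes indispensable. It is also the reason Kählerness of $X'$ is only asserted \emph{possibly} in the blow-up case~(3), where the behaviour of the contracted curve can obstruct descent of the K\"ahler cone.
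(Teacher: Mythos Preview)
This theorem is not proved in the paper: it is stated with attribution to \cite[Thm.~1, Thm.~2]{Pet01} and \cite[Main Thm.]{Pet98} and used as a black box. There is therefore no ``paper's own proof'' to compare your proposal against.

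Your sketch is, at the level of a roadmap, a plausible outline of how the Campana--Peternell programme for K\"ahler threefolds proceeds, but since the present paper offers no argument, I cannot assess whether your route matches or diverges from one. If you intend to reconstruct the proof, you should consult the cited Peternell papers directly; several of the steps you describe (notably the cycle-space construction of the contraction, the K\"ahlerness of $X'$ in cases (1) and (2), and the abundance argument for $\nu(X)\in\{1,2\}$) are each substantial and occupy large portions of those references rather than following from the short indications you give.
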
  

In our situation the simple, non-Kummer case is easily excluded: if $X$ is a simple threefold with infinite fundamental group,
its fundamental group is generically large. Thus $X$ is Kummer by \cite[Thm.1]{CZ05}. This shows the

\begin{corollary} \label{corollarystartMMP}
Let $X$ be a compact K\"ahler threefold satisfying \Hplus. Then $K_X$ is semiample 
or $X$ admits an elementary contraction $\holom{\varphi}{X}{X'}$.
\end{corollary}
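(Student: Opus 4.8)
The plan is to combine Peternell's structure theorem (Theorem \ref{theorempeternell}) with the observation that the offending case---$X$ simple and non-Kummer---cannot occur under \Hplus. First I would recall that $X$ satisfies \Hplus, so in particular $\pi_1(X)$ is infinite. If $X$ happens to be projective (algebraic), then the classical MMP of Mori applies and the conclusion is immediate, so I would reduce to the non-algebraic case where I want to invoke Theorem \ref{theorempeternell}. That theorem has the standing hypothesis that $X$ is \emph{not} both simple and non-Kummer, so the whole point is to verify that this hypothesis is automatically satisfied here.

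The key step is to exclude the simple, non-Kummer case. I would argue as follows: suppose $X$ is simple, meaning there is no proper positive-dimensional compact subvariety through a very general point. By Definition \ref{defigammareduction}, the only subvarieties $Z$ through a very general point with finite $\pi_1(Z)_X$ are contained in the fibres of the $\gamma$-reduction; but for a simple threefold with infinite fundamental group there are no proper subvarieties through a very general point at all, which forces the $\gamma$-reduction to be the identity map and hence $X$ to have generically large fundamental group ($\gamma\dim(X)=\dim(X)=3$). At this point I would invoke \cite[Thm.1]{CZ05}, which states that a compact K\"ahler threefold with generically large fundamental group is Kummer. This contradicts the assumption that $X$ is non-Kummer, so the simple non-Kummer case is impossible.

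Having excluded that case, I can apply Theorem \ref{theorempeternell} directly. Its dichotomy gives precisely the conclusion: either $K_X$ is nef, in which case part I yields that $K_X$ is semiample; or $K_X$ is not nef, in which case part II produces an elementary contraction $\holom{\varphi}{X}{X'}$. This is exactly the disjunction asserted in the corollary.

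The main obstacle---and really the only substantive point---is cleanly establishing that simplicity of a threefold with infinite fundamental group forces generically large fundamental group, so that \cite[Thm.1]{CZ05} applies. Everything rests on correctly reading the definition of ``simple'' (no proper compact subvariety through a very general point) against the definition of the $\gamma$-reduction: since a simple manifold has no positive-dimensional compact subvarieties through a very general point, the $\gamma$-reduction can have no positive-dimensional fibres there, so $\gamma\dim(X)=\dim(X)$, which is exactly the condition of generically large fundamental group. Once this is in place the rest is a direct citation of the two quoted theorems.
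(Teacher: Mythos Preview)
Your proposal is correct and follows essentially the same approach as the paper: exclude the simple non-Kummer case by observing that a simple threefold with infinite $\pi_1$ has generically large fundamental group and then invoking \cite[Thm.1]{CZ05}, after which Theorem \ref{theorempeternell} applies directly. You add a bit more detail than the paper (explicitly treating the projective case via Mori's MMP, and spelling out why simplicity forces $\gamma\dim(X)=\dim X$), but the argument is the same.
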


\begin{proposition} \label{propositionthreecases}
Let $X$ be a compact K\"ahler threefold satisfying \Hplus.
Then (up to finite \'etale cover) exactly one of the following holds
\begin{enumerate}[(1)]
\item $K_X$ is semiample ; or
\item $X$ admits a fibre type contraction ; or  
\item $X$ is a blow-up $X \rightarrow X'$ along an elliptic curve. The fundamental group of $X$ is isomorphic to $\Z^{\oplus 2}$. 
The Albanese map is a smooth fibration with simply connected fibres onto an elliptic curve.
\end{enumerate}
\end{proposition}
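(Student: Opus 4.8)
The plan is to run one step of the minimal model program and sort out the three possibilities produced by Corollary \ref{corollarystartMMP}. If $K_X$ is semiample we are in case (1), and if the elementary contraction \holom{\varphi}{X}{X'} furnished by Corollary \ref{corollarystartMMP} is of fibre type we are in case (2); these are mutually exclusive with each other and with (3), since a semiample $K_X$ is in particular nef. So the entire content is to show that a \emph{birational} elementary contraction forces case (3). By Corollary \ref{corollaryMMP3} (which rests on Theorem \ref{theorempeternell} and Lemma \ref{lemmadivisor}) such a $\varphi$ is the blow-up of a smooth compact K\"ahler threefold $X'$ along a smooth curve $B$; let $E\subset X$ be the exceptional divisor, a ruled surface over $B$. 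The argument proving Corollary \ref{corollaryMMP3} shows that $E|_E$ is not numerically trivial, hence by Lemma \ref{lemmadivisor} the group $\pi_1(E)_X=\pi_1(B)_X$ is infinite; in particular $B$ has positive genus.

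First I would show that $B$ is \emph{elliptic}. The fibres of the ruling \holom{}{E}{B} are the exceptional curves $C\simeq\PP^1$; since $N_{C/E}\simeq\sO$ while $N_{E/X}|_C\simeq\sO_{\PP^1}(-1)$ and $\mathrm{Ext}^1(\sO_{\PP^1}(-1),\sO)=H^1(\PP^1,\sO_{\PP^1}(1))=0$, one gets $N_{C/X}\simeq\sO\oplus\sO_{\PP^1}(-1)$, so $h^0(C,N_{C/X})=1$ and these curves move exactly in the one-dimensional family given by the ruling. Being rational they lift to $\upX\subset\barX$, and the lifted family is parametrised by the cover \holom{}{\upB}{B} attached to $\ker(\pi_1(B)\to\pi_1(X))$, which is infinite because $\pi_1(B)_X$ is. A lifted curve has normal bundle $\sO\oplus\sO_{\PP^1}(-1)$ in $\barX$ as well, so the universal property of the cycle space gives an injection $\upB\hookrightarrow\chow{\barX}$ into a one-dimensional component; as $\chow{\barX}$ is compact (this is where the K\"ahler hypothesis in \Hplus is used) its normalisation $\barB$ is a compact curve, and arguing as in Lemma \ref{lemmafibration} and Corollary \ref{corollaryfibrationone} the image of $\upB$ is Zariski open in $\barB$. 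Thus $\upB$ is a compact Riemann surface minus finitely many points, hence of finite topological type. An infinite cover of a curve of genus $\ge2$ cannot be of this type (e.g. by Kwack's theorem the covering map would extend to a finite branched cover of the compact models, contradicting its infinite degree), so $B$ is elliptic.

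It remains to determine $\pi_1(X)$ and the Albanese map. Since \Hplus implies (H) and, by Section \ref{subsectionmori}, the existence of a minimal model, Theorem \ref{theoremalmostabelian} applies: after a finite \'etale cover $\pi_1(X)\simeq\Z^{\oplus 2q(X)}$ is free abelian with $q(X)\ge1$. By Corollary \ref{corollarysurjectivityAlbanese} the Albanese map \holom{\alpha}{X}{A} is surjective with connected fibres, and since $\pi_1(X)$ is abelian the induced map $\pi_1(X)\to\pi_1(A)$ is an isomorphism; hence the general fibre $F$ satisfies $\pi_1(F)_X=1$. Now I would exclude $q(X)\ge2$. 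If $q(X)=2$, Proposition \ref{prop-small-irregularity} makes $\alpha$ a smooth fibration with simply connected fibres, i.e. a $\PP^1$-bundle over the abelian surface $A$; but such a threefold carries no birational elementary contraction (its only $K_X$-negative extremal ray is the fibre class), contradicting our hypothesis. If $q(X)=3$ then $\alpha$ is bimeromorphic and $X$ is a blow-up of the torus $A$ along $B$, and this is the case that must be ruled out.

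The main obstacle is exactly this last exclusion: nothing formal in the MMP prevents $X$ from being a nontrivial blow-up of a three-dimensional torus, and ruling it out is where \Hplus is indispensable. Here the universal cover would be $\mathrm{Bl}_{\upB}\,\C^3$ with $\upB=\pi_A^{-1}(B)$ an infinite family of parallel affine lines (as $B\hookrightarrow A$ is a subtorus), and the infinitely many disjoint exceptional divisors lying over them, together with the limit cycles of their rulings in the compact cycle space $\chow{\barX}$, are incompatible with an embedding $\upX\subset\barX$ into a compact complex manifold; equivalently, $\pi_1(B)_X$ must have finite index in $\pi_1(X)$. Once $q(X)\ge2$ is excluded we have $q(X)=1$, so $A$ is an elliptic curve and $\pi_1(F)_X$ is finite; Corollary \ref{corollaryfibrationone} then shows that, up to finite \'etale cover, $\alpha$ is a smooth fibration with simply connected fibres onto an elliptic curve and $\pi_1(X)\simeq\Z^{\oplus2}$. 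Combined with the fact that $B$ is elliptic, this is precisely case (3).
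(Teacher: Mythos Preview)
Your strategy---show $B$ is elliptic via the cycle space, then determine $\pi_1(X)$---parallels the paper's, and your argument that $B$ has genus one is essentially the paper's (Kwack and Kobayashi--Ochiai being interchangeable here). The gap is in how you determine $\pi_1(X)$.

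You invoke Theorem~\ref{theoremalmostabelian}, justifying the minimal-model hypothesis in the non-uniruled case ``by Section~\ref{subsectionmori}''. But the only statement in that section producing a minimal model is Proposition~\ref{propositionfullMMP}, whose proof \emph{uses} Proposition~\ref{propositionthreecases}. So this step is circular: in the non-uniruled case you cannot yet appeal to Theorem~\ref{theoremalmostabelian}, and there is nothing excluding the awkward $\kappa(X)=2$ situation where that theorem's proof genuinely needs the minimal model.

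The paper avoids this completely. Rather than going through Theorem~\ref{theoremalmostabelian}, it proves directly that $\pi_1(E)_X$ has \emph{finite index} in $\pi_1(X)$, by exactly the Neron--Severi argument you sketch in your $q(X)=3$ paragraph: if the index were infinite, each of the infinitely many components $E_i$ of $\pi_X^{-1}(E)$ compactifies (via the one-dimensional cycle-space component you already constructed) to a divisor $E'_i\subset\barX$, and the relations $E'_i\cdot F_i=E\cdot F<0$, $E'_i\cdot F_j=0$ for $i\neq j$ force the $E'_i$ to be linearly independent in $NS(\barX)$. Since $\pi_1(E)\simeq\pi_1(B)\simeq\Z^{\oplus 2}$, this immediately gives $\pi_1(X)$ almost abelian of rank two, and Proposition~\ref{prop-small-irregularity} finishes. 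In short, the argument you relegated to a special case at the end is the engine of the whole proof; running it \emph{first} eliminates the need for Theorem~\ref{theoremalmostabelian} and the case split on $q(X)$ altogether.
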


\begin{proof}
By Corollary \ref{corollarystartMMP} we know that if we are not in the first two situations,
then $X$ admits a birational contraction \holom{\mu}{X}{X'}.  
By Corollary \ref{corollaryMMP3} the divisor $E$ is 
contracted onto a smooth curve $B$ and $X'$ is smooth.
The ruling of the divisor $E$ gives a family of rational curves $F$ such that $E \cdot F<0$.
Thus the restriction $E|_{E}$ is not numerically trivial and by Lemma \ref{lemmadivisor} the group $\pi_1(E)_X$ is infinite,
so $B$ has positive genus. 
Let $\upE \subset \upX$  be a lift of the submanifold $E \subset X$ to the universal 
cover\footnote{Note that $\upE$ is not necessarily isomorphic to the universal cover of $E$, but the image
of a natural map from the universal cover of $E$ to $\upX$.}. 
Then $\upE$ is a ruled surface over some \'etale cover $\upB$ of $B$ with general fibre $F$ (we can identify the fibres of 
the $\upE \rightarrow \upB$ and $E \rightarrow B$ since they are simply connected).

Fix now a compactification $\upX \subset \barX$.
The curve $\upB$ parametrises a family of compact subvarieties on $\barX$, so 
we have a canonical injection $\upB \hookrightarrow \chow{\barX}$.
Since 
$$
\sO_{\PP^1} \oplus \sO_{\PP^1}(-1) \simeq N_{F/X} \simeq N_{F/\upX} \simeq N_{F/\barX},
$$ 
we have
$h^0(F, N_{F/\barX}) = 1$. Thus the cycle space 
$\chow{\barX}$ is smooth of dimension one in the point $[F]$ and we denote by $\barB$
the unique irreducible component through $[F]$.  
As in the proof of Lemma \ref{lemmafibration} we see that
the image of $\upB$ in $\barB$ is Zariski open. 
Hence if $\barE \subset \barX$ denotes the divisor covered by the cycles parametrised by $\barB$,
we obtain a compactification $\upE \subset \barE$. 
Moreover the curve $B$ has genus one: otherwise the holomorphic map $\upB \rightarrow B$
would extend to a holomorphic map $\barB \rightarrow B$ by the Kobayashi-Ochiai theorem \ref{KO},
but $\upB \rightarrow B$ has infinite fibres.

We claim now that $\pi_1(E)_X$ has finite index in $\pi_1(X)$. 
Since $\pi_1(E) \simeq \pi_1(B) \simeq \Z^{\oplus 2}$
this implies that $\pi_1(X)$ is almost abelian of rank two.
Thus (up to finite \'etale cover) $\pi_1(X) \simeq \Z^{\oplus 2}$
and we conclude by Proposition \ref{prop-small-irregularity}.

{\em Proof of the claim:}
We argue by contradiction and assume that  $\pi_1(E)_X$ has infinite index in $\pi_1(X)$. 
This exactly means that the preimage $\pi_X^{-1}(E)$ has infinitely many connected components $(E_i)_{i\in I}$,
each connected component being isomorphic to the lift $\upE \subset \upX$ we discussed above. 
We have seen that every $E_i$ can be compactified as a divisor $E'_i$ in $\barX$.
Moreover if $F$ is a fibre of the ruling $E \rightarrow B$ and $F_i$ a lift of $F$ contained in $E_i$, then
$$
E'_i\cdot F_i=E\cdot F<0.
$$
Since for $i \neq j$ the divisors $E'_i$ and $E'_j$ can only meet in the boundary $\barX\setminus\upX$ and
$F_i \cap  (\barX\setminus\upX)= \emptyset$ we have
\[
E'_i\cdot F_j = 0 
\]
for $i \neq j$. Thus the divisors $(E'_i)_i$ are linearly independent 
in the Neron-Severi group of $\barX$, contradicting the finite dimensionality of $NS(\barX)$. 
\end{proof}

\begin{proposition} \label{propositionfullMMP}
Let $X$ be a compact K\"ahler threefold that is not uniruled satisfying \Hplus.
Then $X$ admits a smooth minimal model.

More precisely there exists a birational morphism \holom{\mu}{X}{X_{\min}} onto a smooth
compact K\"ahler threefold such that $K_{X_{\min}}$ is nef.
The morphism $\mu$ decomposes into a sequence of blow-ups along elliptic curves.
\end{proposition}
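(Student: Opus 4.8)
The plan is to run the minimal model program on $X$ and to control every step by the results of the previous section, proceeding by induction on the second Betti number $b_2(X)$. Since $X$ is not uniruled, neither is any variety bimeromorphic to it, so a fibre type contraction can never occur (the positive-dimensional fibres of such a contraction are covered by rational curves). Hence, applying Proposition \ref{propositionthreecases}, we are always in case (1) or case (3). If $K_X$ is nef, it is even semiample by Theorem \ref{theorempeternell}(I), which applies thanks to Corollary \ref{corollarystartMMP}, so $X$ is already a smooth minimal model and there is nothing to prove.

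If $K_X$ is not nef, then by Corollary \ref{corollarystartMMP} the manifold $X$ carries a birational elementary contraction which, by Corollary \ref{corollaryMMP3}, is the blow-up $\holom{\mu_1}{X}{X'}$ of a smooth threefold $X'$ along a smooth curve $B$; moreover the arguments in the proof of Proposition \ref{propositionthreecases} show directly (without passing to an \'etale cover) that $B$ is an elliptic curve and that $\pi_1(E)_X$ has finite index in $\pi_1(X)$, where $E$ is the exceptional divisor. The heart of the matter will then be to verify that $X'$ again satisfies the hypotheses of the proposition, so that the induction can run: $X'$ is smooth, it is not uniruled (a bimeromorphic invariant), and $b_2(X')=b_2(X)-1$ by the blow-up formula, so it remains to prove that $X'$ is K\"ahler and satisfies \Hplus.

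The K\"ahler property of $X'$ is exactly the point left open by Theorem \ref{theorempeternell} in case (3), so it requires a separate argument. First I would descend a class: writing $[\omega_X]=\mu_1^*\beta+c[E]$ in the decomposition $H^{1,1}(X)=\mu_1^*H^{1,1}(X')\oplus\mathbb{R}[E]$ and normalising $c$ so that the class is trivial on the ruling $\ell$ of $E$, one obtains $\beta\in H^{1,1}(X')$ and checks the Demailly--Paun positivity inequalities on all subvarieties of $X'$; these hold because $\omega_X$ is positive while $E\cdot\ell<0$ forces the correcting coefficient to have the right sign. For \Hplus I would instead use the universal cover: since $\pi_1(E)_X$ has finite index, the preimage $\fibre{\pi_X}{E}$ is a \emph{finite} disjoint union of divisors $E_i\subset\upX$, each of which admits a compactification $\barE_i$ in a fixed K\"ahler compactification $\upX\subset\barX$, exactly as produced in the proof of Proposition \ref{propositionthreecases}. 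The contraction $\mu_1$ lifts to the simultaneous blow-down $\holom{\tilde\mu_1}{\upX}{\upY}$ of the $E_i$, and extending this by blowing down the finitely many $\barE_i$ inside $\barX$ yields a K\"ahler compactification $\upY\subset\barY$ of $\upY\simeq\widetilde{X'}$; this is precisely \Hplus for $X'$.

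Granting the key step, the induction closes: $X'$ meets all the hypotheses with strictly smaller $b_2$, hence admits a smooth minimal model $\holom{\mu'}{X'}{X_{\min}}$ obtained as a sequence of blow-ups along elliptic curves, and $\mu=\mu'\circ\mu_1$ is the desired morphism onto a smooth threefold with $K_{X_{\min}}$ nef; termination is automatic since $b_2$ drops by one at each blow-down and is bounded below. I expect the main obstacle to be precisely this K\"ahler control, and in particular the extension of the contraction to $\barX$: one must ensure that the divisors $\barE_i$ can really be contracted simultaneously and that $\barY$ remains K\"ahler, which is where the finiteness of the number of components of $\fibre{\pi_X}{E}$, equivalently the finite index of $\pi_1(E)_X$, is indispensable. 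Note that the K\"ahlerness of $X'$ itself does not follow formally from that of $\barY$, since a K\"ahler metric on $\barY$ is not $\pi_1(X')$-invariant and hence does not descend; it genuinely needs the descent of K\"ahler classes described above.
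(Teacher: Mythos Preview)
Your overall strategy (run the MMP, induct on $b_2$) matches the paper's, and you correctly identify that the crux is showing $X'$ is K\"ahler. However, your two proposed arguments both have gaps, and the paper's approach sidesteps the harder one entirely.

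For K\"ahlerness of $X'$, your Demailly--Paun sketch is not convincing: it is known that blowing down a smooth K\"ahler threefold along a smooth curve can fail to be K\"ahler (this is precisely the open case (3) in Theorem \ref{theorempeternell}), so a generic positivity computation cannot succeed without invoking something specific to the situation. The paper instead uses the Harvey--Lawson/Peternell current criterion \cite{HL83,Pet86}: if $X'$ were not K\"ahler there would exist a positive current $T$ of bidimension $(1,1)$ with $[E'+T]=0$, where $E'\subset X'$ is the blown-down elliptic curve. But Proposition \ref{propositionthreecases} already gives (after \'etale cover) a smooth Albanese map $\alpha':X'\to E$, and the key geometric fact---which your argument never uses---is that $E'$ \emph{surjects onto} $E$. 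Pairing $[E'+T]$ with $(\alpha')^*H$ for $H$ ample on $E$ gives $(\alpha')^*H\cdot E'>0$ and $(\alpha')^*H\cdot T\ge 0$, contradicting $[E'+T]=0$. This is short and exploits exactly the extra structure available.

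Your attempt to re-establish \Hplus for $X'$ is both unnecessary and unjustified. The step ``blow down the finitely many $\barE_i$ inside $\barX$'' is the real problem you yourself flag: nothing guarantees that the compactified divisors $\barE_i$ are contractible in $\barX$ (their ruled structure over the boundary $\barB_i\setminus\upB_i$ is uncontrolled), nor that the result would be K\"ahler. The paper simply never needs \Hplus for the intermediate $X_j$. The point is that Corollary \ref{corollaryMMP3} and the genus argument in the proof of Proposition \ref{propositionthreecases} only require (H) resp.\ \Hplus for the \emph{original} $X$, applied to strict transforms in $X$ of the successive exceptional divisors. Once the smooth Albanese $\alpha:X\to E$ is in place, it descends to each $X_j$ by the rigidity lemma, the blown-down curve $B_j$ surjects onto $E$, and the Harvey--Lawson argument shows $X_{j+1}$ is K\"ahler. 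The induction thus runs on the lighter package ``$X_j$ smooth K\"ahler with smooth $\alpha_j:X_j\to E$, dominated by $X$'', not on the full \Hplus hypothesis for $X_j$.
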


\begin{proof}
If $K_X$ is semiample the statement is trivial. 
Otherwise we know by Proposition \ref{propositionthreecases} 
that (up to \'etale cover) the Albanese is a smooth fibration \holom{\alpha}{X}{E} onto an elliptic curve $E$. 
Moreover $X$ is a blow-up \holom{\mu}{X}{X'} along a smooth elliptic curve $E'$ and the rigidity lemma yields a smooth fibration
\holom{\alpha'}{X'}{E} such that $\alpha= \alpha' \circ \mu$. 
We claim that $X'$ is still K\"ahler, the statement then follows by induction on the Picard number.

{\em Proof of the claim.}
If $X'$ is not K\"ahler it follows from \cite[Thm.14]{HL83}, \cite[Thm.2.5]{Pet86} 
that there exists a current $T$ of bidimension (1,1) such that $T \geq 0$ and
\[
[E'+T] = 0.
\]
Yet if $H$ is an ample divisor on the curve $E$, then $(\alpha')^* H \cdot E'>0$ since $E'$ surjects onto $E$.
Since  $(\alpha')^* H \cdot T \geq 0$ for every $T \geq 0$ we can't have $[E'+T] = 0$.
 \end{proof}

\subsection{Classification of threefolds satisfying \Hplus.}

\begin{theorem} \label{theoremclassification}
Let $X$ be a compact K\"ahler threefold satisfying \Hplus.
Then (up to finite \'etale cover) one of the following holds:
\begin{enumerate}[(1)]
\item $\pi_1(X) \simeq \Z^{\oplus 6}$ and $X$ is a torus.
\item $\pi_1(X) \simeq \Z^{\oplus 4}$ and $X$ is a $\PP^1$-bundle over a two-dimensional torus.
\item $\pi_1(X) \simeq \Z^{\oplus 2}$ and $X$ admits a smooth morphism onto an elliptic curve.
Furthermore there exists a birational morphism $\holom{\mu}{X}{X'}$ which is sequence
of blow-ups along elliptic curves mapping surjectively onto $E$ 
such that the Albanese morphism of $\alpha':X' \rightarrow E$ is a locally trivial fibration.
\end{enumerate}
\end{theorem}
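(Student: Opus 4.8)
The plan is to combine the structural results already established into a case analysis driven by the minimal model program. By Proposition \ref{propositionthreecases} we know that (up to finite \'etale cover) exactly one of three situations occurs: $K_X$ is semiample, $X$ admits a fibre type contraction, or $X$ is a blow-up along an elliptic curve with $\pi_1(X) \simeq \Z^{\oplus 2}$. The third case is already essentially case (3) of the theorem once we invoke Proposition \ref{propositionfullMMP} to iterate the blow-downs and Corollary \ref{corollaryfibrationone} (via Proposition \ref{prop-small-irregularity}) to identify the Albanese with the smooth fibration onto the elliptic curve. So the heart of the argument lies in analysing the first two cases and showing they lead to (1), (2), or back into (3).

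First I would treat the case where $K_X$ is semiample. If $X$ is not uniruled, Proposition \ref{propositionfullMMP} gives a smooth minimal model, so I may assume $K_X$ is nef and study the Kodaira dimension. When $\kappa(X)=0$, abundance forces $c_1(X)=0$, and the Beauville--Bogomolov decomposition together with the infinite fundamental group and Hypothesis \Hplus\ should force $X$ to be a torus, giving case (1). When $\kappa(X) \in \{1,2\}$, the Iitaka fibration is a general type fibration by Lemma \ref{lemmageneral} (in the $\kappa=2$ case) or an analogous statement for $\kappa=1$ via Lemma \ref{maptocurve}; Corollary \ref{corollary fundamental groups} then bounds the fundamental group of the base, and Corollary \ref{corollaryfibrationone} forces the fibration onto the curve (when $\kappa=1$) to be smooth with simply connected fibres over an elliptic curve, landing in case (3). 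The subtlety is keeping track of how $q(X)$ constrains the picture so as to apply Proposition \ref{prop-small-irregularity}.

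Next I would handle the fibre type contraction. Here $X$ carries a $\PP^1$-bundle or conic bundle structure over a surface, or a Mori fibre space onto a curve. Using Corollary \ref{corollarysurjectivityAlbanese}, the Albanese map is a fibration with trivial orbifold divisor, and one studies the irregularity $q(X)$. If $q(X)=3$ we recover the torus (though a genuine fibre type contraction prevents this). If $q(X)=2$, Corollary \ref{corollaryfibrationtwo} applies to the Albanese map: the base surface is a torus or a ruled surface over an elliptic curve, and the fibration is smooth with simply connected fibres or a $\PP^1$-bundle; combined with the fibre type structure this should produce the $\PP^1$-bundle over a two-torus of case (2). The remaining low-irregularity subcases fall into (3) via Corollary \ref{corollaryfibrationone}.

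The main obstacle I expect is the interplay between the abstract fibre type contraction provided by the MMP and the concrete Albanese structure: one must verify that the contraction is \emph{compatible} with the Albanese fibration so that the fibres of the Albanese map are genuinely $\PP^1$'s and the total space really is a $\PP^1$-bundle rather than merely a conic bundle with degenerations. This is precisely where the deformation-theoretic rigidity of rational curves under \Hplus\ enters, as flagged in the introduction: the non-Hausdorff jumping of Hirzebruch surfaces must be excluded, and I anticipate using the compactness of the cycle space $\chow{\barX}$ together with the normal bundle computation $N_{F/\barX} \simeq \sO_{\PP^1} \oplus \sO_{\PP^1}$ (as in Lemma \ref{lemmafibration} and Proposition \ref{propositionthreecases}) to show the ruling does not degenerate, forcing local triviality of the Albanese morphism after the blow-downs of Proposition \ref{propositionfullMMP}.
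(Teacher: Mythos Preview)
Your organisation by the trichotomy of Proposition \ref{propositionthreecases} is different from the paper's, which instead first invokes Theorem \ref{theoremalmostabelian} to fix $\pi_1(X)\simeq\Z^{\oplus 2r}$ and then branches on $r\in\{1,2,3\}$. The cases $r=3$ and $r=2$ are then dispatched in one line each via Proposition \ref{propositionthreecases} and Proposition \ref{prop-small-irregularity}, and all the work sits inside $r=1$, where the paper splits into uniruled versus non-uniruled. Your route could in principle be made to converge with this, but as written it has two genuine gaps.

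First, your treatment of $K_X$ semiample with $\kappa(X)=0$ is wrong: Beauville--Bogomolov does \emph{not} force a torus. The product $E\times S$ with $S$ a K3 surface has $c_1=0$, infinite $\pi_1\simeq\Z^{\oplus 2}$, and satisfies \Hplus, yet lands in case (3), not (1). So $\kappa=0$ already splits across outcomes, and you must track the irregularity to separate them --- which is exactly why the paper organises by $r$ from the outset.

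Second, and more seriously, you have not addressed local triviality of $\alpha':X'\to E$ in the non-uniruled situation. Your final paragraph correctly isolates the rigidity of rational curves (Hirzebruch jumping) as the obstacle in the \emph{uniruled} case, and the paper indeed handles this via the dedicated Lemma \ref{lemmahirzebruch} plus a careful case analysis of $\PP^1$-bundles, conic bundles, and del Pezzo fibrations. But for $X'$ minimal and non-uniruled the argument is entirely different and you do not mention it: for $\kappa(X)=2$ the paper first shows $X'$ is projective (the $\alpha'$-fibres are of general type) and then invokes the Oguiso--Viehweg isotriviality theorem \cite{OV01}; for $\kappa(X)=1$ there is a delicate argument showing the $\alpha'$-fibres all have $\kappa=1$, then using the product fibration $\alpha'\times f:X'\to E\times\PP^1$ and a compactification of the Iitaka fibres in $\barX$ to produce a covering family of rational curves that forces all $\alpha$-fibres to be bimeromorphic; for $\kappa(X)=0$ one gets the product $E\times(\mathrm{K3})$ directly. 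None of this is visible in your sketch, and Corollary \ref{corollaryfibrationone} alone (which you invoke) gives smoothness but not local triviality.
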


{\bf Proof.}
By Theorem \ref{theoremalmostabelian} 
there exists a finite \'etale cover such that $\pi_1(X) \simeq \Z^{\oplus 2r}$
with $r \in \N$. 
Since the Albanese map \holom{\alpha}{X}{Alb(X)} is a fibration by Corollary \ref{corollarysurjectivityAlbanese}, we have $r \leq 3$.

If $r=3$ the Albanese map is birational. Since $X$ is minimal by Proposition \ref{propositionthreecases}
we see that $X$ is isomorphic to $Alb(X)$.

If $r=2$ we know by Proposition \ref{prop-small-irregularity} that $\alpha$ is a smooth map with simply connected fibres. 
Since a simply connected curve is $\PP^1$ we conclude. 

Thus we are left to deal with the case $r=1$. In this case we know by  
Proposition \ref{prop-small-irregularity} that
the Albanese map is a smooth map \holom{\alpha}{X}{E}
with simply connected fibres onto the elliptic curve $E$.

{\em I. $X$ is not uniruled.}

If $X$ is not a minimal model, we know by Proposition \ref{propositionfullMMP} that it admits
a smooth minimal model. 
More precisely we know 
that a contraction \holom{\mu}{X}{X'} of this MMP is the blow-up along a smooth elliptic 
curve $E'$ and there exists a smooth fibration \holom{\alpha'}{X'}{E} such that $\alpha= \alpha' \circ \mu$.
Hence the minimal model 
$X'$ is a compact K\"ahler threefold that admits a smooth fibration \holom{\alpha'}{X'}{E}
onto an elliptic curve.
In particular all the $\alpha'$-fibres are minimal surfaces. If $X'$ is projective 
we know by \cite[Thm.0.1]{OV01} that after finite \'etale base change $X'$ is a product $E \times F$.
For the compact K\"ahler case we will use the hypothesis \Hplus in a much stronger sense. 
We make another case distinction:

{\em (i) $\kappa(X)=2$.}
Let $F$ be a general fibre of the Iitaka fibration \holom{f}{X'}{Y}. By the proof of Theorem \ref{theoremalmostabelian}
we know that $\pi_1(F)_{X'}$ has finite index in $\pi_1(X')$, so the elliptic curve $F$ maps surjectively onto $E$.
Equivalently we can say
that if $D$ is a $\alpha'$-fibre, then $D$ surjects onto $Y$. Since $K_{X'}$ is the pull-back of an ample divisor on $Y$, we see that
$D$ is of general type. In particular $\alpha'$ is a projective morphism onto a curve, so $X'$ is projective.

{\em (ii) $\kappa(X)=1$.} We claim that the $\alpha'$-fibres have Kodaira dimension one.
If this is not the case, $\alpha'$ coincides with the Iitaka fibration and 
$K_{X'} \simeq_\Q (\alpha')^* A$ for some ample $\Q$-divisor on $E$. Since by Theorem \ref{theoremchi}
and Riemann-Roch one has $0=\chi(X', \sO_{X'})= - \frac{1}{24} K_{X'} \cdot c_2(X')$ we see that a general fibre $F$ satisfies
$c_2(F)=0$. Hence the general fibres are tori (up to finite \'etale cover), so not simply connected, a contradiction.

The same argument shows that the general fibres $T$ of the Iitaka fibration $\holom{f}{X'}{\PP^1}$ are complex tori and $X'$ admits an equidimensional fibration 
$$
\holom{\psi':= \alpha' \times f}{X'}{E \times \PP^1}.
$$
Note that if we restrict $\psi'$ to $T=\fibre{f}{y}$, we get an elliptic bundle $T \rightarrow E \times y$, so the moduli of the
elliptic curves in this family is constant.

Set now  \holom{\psi:= \psi' \circ \mu}{X}{E \times \PP^1}, then $\psi$ is a factorisation of the Albanese map \holom{\alpha}{X}{E}.
By what precedes the $\alpha$-fibres are elliptic surfaces with Kodaira dimension one.
We claim that all the $\alpha$-fibres are bimeromorphic to each other. This obviously implies that
the $\alpha'$-fibres are bimeromorphic to each other. Since they are minimal surfaces, they are isomorphic.
 
{\em Proof of the claim.} Let $\upX \subset \barX$ be some compactification, then
by Corollary \ref{corollaryfibrationone} we have a fibration \holom{\overline{\alpha}}{\barX}{\PP^1}
compactifying the pull-back of $\alpha$. 
Note also that up to blowing-up $\barX$ the pull-back of $\psi$ to the universal cover $\upX$ extends to an elliptic fibration
\holom{\overline{\psi}}{\barX}{\PP^1 \times \PP^1 \supset \C \times \PP^1} 
(corresponding to the relative Iitaka fibration of the general $\overline{\alpha}$-fibres). In particular $\fibre{(\mu \circ \pi_X)}{T}$ 
extends to a surface $\barT$ admitting
an elliptic fibration $\barT \rightarrow \PP^1$ that over $\C \subset \PP^1$ is smooth with constant moduli.
Thus if $\barT_{min} \rightarrow \PP^1$ is a relative minimal model, the canonical bundle formula shows 
that $\kappa(\barT_{min})=-\infty$. 
By the classification of surfaces there is only one type of compact K\"ahler surfaces that
are relative minimal models with an elliptic fibration and $\kappa=-\infty$:  products $\PP^1 \times C$
with $C$ some elliptic curve. 
Since the deformations of $T$ dominate $X'$ we obtain a covering family of rational curves on $\barX$
such that the general member meets an $\alpha$-fibre $F$ (or rather its lift to $\upX$) in a unique point.
Let $\Gamma$ be the universal family and denote by \holom{p}{\Gamma}{X} and \holom{q}{\Gamma}{S}
the natural maps. Since there is a unique rational curve through a general point of $\barT$, the map $p$ is bimeromorphic.
Thus for every $F$ we obtain a bimeromorphic map
$q|_{\fibre{p}{F}}: \fibre{p}{F} \rightarrow S$, hence any two $\alpha$-fibres are bimeromorphic.

{\em (iii) $\kappa(X)=0$ (hence $c_1(X')=0$).} 
The Beauville-Bogomolov decomposition theorem implies that (up to finite \'etale cover) $X'$ is a  
product $E\times S$ where $S$ is a K3 surface and $E$ an elliptic curve.

{\em II. $X$ is uniruled.}

Since the $\alpha$-fibres are uniruled and simply connected surfaces, they are rationally connected.
In particular $X$ is projective and we can run a relative MMP on $X$ over $E$.
As in Proposition \ref{propositionfullMMP} we see
that a contraction \holom{\mu}{X}{X'} of this MMP is the blow-up along a smooth elliptic 
curve $E'$ and there exists a smooth fibration \holom{\alpha'}{X'}{E} such that $\alpha= \alpha' \circ \mu$. 
Thus the MMP terminates with  $X'$ a smooth projective threefold that admits a smooth fibration \holom{\alpha'}{X'}{E}
and an elementary contraction of fibre type \holom{\psi}{X'}{Y}.
Moreover $\alpha'$ factors through $\psi$.

For the rest of the proof we denote by $\mathbb F_d$ the Hirzebruch surface $\PP(\sO_{\PP^1}\oplus \sO_{\PP^1}(-d))$.

{\em Case 1) $Y$ is a surface.} Since $\psi$ is a $\PP^1$-  or conic bundle we know by Corollary \ref{corollaryfibrationtwo}
that $Y$ is a torus or a $\PP^1$-bundle over $E$. Since $\Z^{\oplus 2} \simeq \pi_1(X') \simeq \pi_1(Y)$ we are in the second case.

{\em Case 1a)} If $\psi$ is a $\PP^1$-bundle, all the $\alpha$-fibres are Hirzebruch surfaces. 
It is straightforward to see that there exists at most finitely many points in $0 \in E$ such that 
the fibre $F_0$ is not isomorphic to the general fibre $F$. Moreover if $F$ is isomorphic
$\mathbb F_e$, then $F_0$ is isomorphic to $\mathbb F_d$
with $d>e$. We claim that if such an $0$ exists there exists an analytic neighbourhood of $0$ such that all 
the fibres in the neighbourhood are isomorphic to $\mathbb F_d$. Since the ``exceptional'' fibres
are at most finite, this shows that $\alpha$ is locally trivial.

{\em Proof of the claim.}
Denote by $C$ the exceptional section of $\mathbb F_d$, i.e. the unique curve with self-intersection $-d$.
Then by Lemma \ref{lemmahirzebruch} the deformations of $C$ dominate $E$. 
Since $C$ is integral, a small deformation is an irreducible curve $C_t$ having also self-intersection $-d$.
Since a Hirzebruch surface has a unique irreducible curve with negative self-intersection (its exceptional section), the
$\alpha$-fibre containing $C_t$ is isomorphic to $\mathbb F_d$.

{\em Case 1b)} If $\psi$ is a conic bundle, 
we know by Corollary \ref{corollaryfibrationtwo} that the $\psi$-singular locus $\Delta$ is a disjoint union of 
sections of the ruling $Y \rightarrow C$.
Since the contraction is elementary, $\Delta$ is irreducible. In particular every $\alpha$-fibre
is a conic bundle with a unique singular fibre, i.e. the blow-up of a Hirzebruch surface in one point.

Let $F$ be an $\alpha$-fibre isomorphic to the
blow-up of the surface $\mathbb F_d$ with $d \geq 2$ in a point that is not in the exceptional section. 
Denote by $C$ the strict transform of the exceptional section, i.e. the unique section with self-intersection $-d$.
By Lemma \ref{lemmahirzebruch} the deformations of $C$ dominate $E$. 
Thus a small deformation $F_t$ of $F$ also contains a rational curve $C_t$ having self-intersection $-d$ that is a section for the
conic bundle structure $F_t \rightarrow \PP^1$. Thus $F_t$ is the blow-up of $\mathbb F_d$
in some point that is not in the exceptional section.
Since the automorphism group of  $\mathbb F_d$ acts transitively on the complement of the exceptional section\footnote{The complement of the exceptional section and a fibre is isomorphic to $\C^2$ and the automorphisms of the plane which are induced from $\mathrm{Aut}(\mathbb F_d)$ consist in the following transformations:
$$(x,y)\mapsto (ax+P(y),by+c)$$
where $(a,b,c)\in\C$ and $P$ is a polynomial of degree less than $d$; these automorphisms act then transitively on this complement.}, all these surfaces are isomorphic.

{\em 2) $Y$ is a curve (i.e. $\alpha'=\psi$).} Then the $\alpha'$-fibres are del Pezzo surfaces. 
If no $\alpha'$-fibre contains a (-1)-curve, they are isomorphic to $\PP^2$ or $\PP^1 \times \PP^1$. 
Hence $\alpha'$ is locally trivial \cite{Mor82}.

Suppose now that there exists a fibre $F$ that contains a (-1)-curve. We claim that there exists a finite \'etale base change $E' \rightarrow E$ such that the relative Picard number of $X' \times_E E' \rightarrow E'$ is at least two. Assuming this for the time being, let us see how
to conclude. The relative Picard number of $X' \times_E E' \rightarrow E'$ is at least two and the anticanonical bundle
is relatively ample. Thus $X' \times_E E'$ admits an elementary Mori contraction that does not identify to $X' \times_E E' \rightarrow E'$.
If the contraction is of birational type our algorithm starts again. If the contraction is of fibre type it maps onto a surface, so we are
in case 1).  

{\em Proof of the claim.} The normal bundle of the (-1)-curve in $X'$ is $\sO_{\PP^1} \oplus \sO_{\PP^1}(-1)$,
so it deforms in a one-dimensional family parametrized by some curve $E'$. It is straightforward to see that
all the curves parametrised by $E'$ are (-1)-curves in some fibre, in particular there is a natural finite morphism $E' \rightarrow E$.
We claim that $E'$ is an elliptic curve, in particular the morphism $E' \rightarrow E$ is \'etale: the morphism $X \rightarrow X'$
is a sequence of blow-ups along elliptic curves, in particular the strict transform of the rational curves
parametrised by $E'$ is well-defined and their normal bundle is $\sO_{\PP^1} \oplus \sO_{\PP^1}(-d)$ with $d>0$.
We can now argue as in the proof of Proposition \ref{propositionthreecases} : the lift of the family to the universal cover $\upX$
compactifies to an analytic family in $\barX$. In particular the universal cover of the non-rational curve $E'$ is quasi-projective, so $E'$ is an elliptic curve. 

By the universal property of the fibre product the family of (-1)-curves parametrised by $E'$ lifts to $X \times_E E'$. Moreover
there exists a unique (-1)-curve parametrised by $E'$ in every fibre of $X' \times_E E' \rightarrow E'$.
Let $D \subset X' \times_E E'$ be the divisor covered by these (-1)-curves and $F$ a general fibre, then 
$D \cap F$ is an effective divisor. Thus if the relative Picard number is one, it is ample. Yet $D \cap F$ is a (-1)-curve, so $D|_F^2=-1$.
$\square$

\begin{lemma}\label{lemmahirzebruch}
Let $X$ be a compact K\"ahler threefold satisfying \Hplus. Suppose 
that $X$ admits a smooth fibration \holom{\varphi}{X}{E} over an elliptic curve
such that the general fibre is rationally connected.
Let $C \subset F$ be a smooth rational curve contained in an $\varphi$-fibre $F$.
Then the family of deformations of $C$ dominates $E$.
\end{lemma}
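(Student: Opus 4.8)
The plan is to transfer the question to the universal cover, where the fibration becomes a genuine map to $\C$, and to exploit the compactness of the cycle space of the K\"ahler compactification. Since $\varphi$ is smooth and the general fibre is rationally connected, every fibre is rationally connected, hence simply connected; in particular $\pi_1(F)=1$, and the homotopy sequence of the bundle $\varphi$ (using $\pi_2(E)=0$) gives $\pi_1(X)\cong\pi_1(E)\cong\Z^{\oplus 2}$. Thus $\pi_1(F)_X$ is trivial and Corollary \ref{corollaryfibrationone} applies: after replacing $\barX$ by a suitable bimeromorphic model, the lifted fibration $\holom{\tilde\varphi}{\upX}{\C}$ extends to $\holom{\overline\varphi}{\barX}{\PP^1}$. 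I would then lift $C$ to a rational curve $\tilde C\subset\upX$ sitting in a fibre $\tilde F_{t_0}=\fibre{\tilde\varphi}{t_0}$. Since $\pi_1(C)$ is trivial, the preimage $\fibre{\pi_X}{C}$ is an infinite disjoint union of copies $\gamma\tilde C$ (indexed by $\gamma\in\pi_1(X)$), and $\gamma\tilde C$ lies in the fibre over $t_0+\lambda_\gamma$, where $\gamma\mapsto\lambda_\gamma$ is the induced isomorphism $\pi_1(X)\to\pi_1(E)=\Lambda\subset\C$. Hence these copies sit over infinitely many distinct points of $\C$.

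Next I would observe that deformations of a fibre curve cannot leave the fibres of $\overline\varphi$. If $\mathcal T\subset\chow{\barX}$ denotes the irreducible component through $[\tilde C]$, the cohomology class is constant along $\mathcal T$, so every cycle $Z$ parametrised by $\mathcal T$ satisfies $Z\cdot\overline\varphi^*[\mathrm{pt}]=\tilde C\cdot\overline\varphi^*[\mathrm{pt}]=0$; as the fibre class is a pull-back from $\PP^1$, each such $Z$ is supported in a single $\overline\varphi$-fibre. This produces a morphism $\holom{\nu}{\mathcal T}{\PP^1}$, $[Z]\mapsto\overline\varphi(Z)$, and the whole problem reduces to showing that $\nu$ is non-constant: a non-constant $\nu$ is surjective (compact source, target $\PP^1$), so every fibre $\tilde F_t$ with $t\in\C$ contains a cycle parametrised by $\mathcal T$. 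Pushing these down by $\pi_X$ yields deformations of $C$ inside $\fibre{\varphi}{[t]}$ for every $[t]\in E$, which is precisely the assertion.

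The heart of the argument, and the step I expect to be the main obstacle, is to force the infinitely many translates $\gamma\tilde C$ into a single compact component of the cycle space, so that $\nu$ must take infinitely many values. For this I would first check that they are all homologous in $\barX$. The deck transformation $\gamma$ covers the translation $t\mapsto t+\lambda_\gamma$ of the contractible base $\C$; since a smooth fibre bundle over a contractible base has trivial monodromy, $\gamma$ acts trivially on $H_2(\upX,\Z)$, whence $[\gamma\tilde C]=[\tilde C]$ in $H_2(\barX,\Z)$. Consequently all the $\gamma\tilde C$ have one and the same $\overline\omega$-volume for a fixed K\"ahler form $\overline\omega$ on $\barX$. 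By the Bishop--Fujiki compactness theorem the cycles of bounded volume form a compact analytic space with finitely many irreducible components, so infinitely many of the $\gamma\tilde C$ lie in one component $\mathcal T$; as these members sit over the distinct points $t_0+\lambda_\gamma$, the map $\nu$ is non-constant.

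The delicate points to nail down are therefore the homology computation (the deck group does \emph{not} act on $\barX$, so one must pass through the triviality of the monodromy of $\tilde\varphi$ over $\C$ rather than naively transporting classes) and the verification that the pushed-down cycles $\pi_X(Z_t)$ genuinely form a connected family of deformations of $C$ in $X$; both should follow from the constancy of cohomology classes along $\mathcal T$ together with the K\"ahler hypothesis \Hplus.
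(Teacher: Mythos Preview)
Your overall architecture is sound and close in spirit to the paper's, but the step you single out as ``the heart of the argument'' contains a genuine error. You claim that because $\tilde\varphi:\upX\to\C$ is a smooth bundle over a contractible base, the deck transformation $\gamma$ acts trivially on $H_2(\upX,\Z)$. This conflates two different things. It is true that $\tilde\varphi$ has no monodromy, so that $H_2(\upX,\Z)\cong H_2(F,\Z)$ canonically; but $\gamma$ is \emph{not} a monodromy transformation of $\tilde\varphi$. It is an automorphism of the total space covering a translation of $\C$, and under the identification $H_2(\upX,\Z)\cong H_2(F,\Z)$ its action is precisely the monodromy of the \emph{original} fibration $\varphi:X\to E$ along the loop $\gamma\in\pi_1(E)$. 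There is no reason for this to be trivial: already for $F\simeq\PP^1\times\PP^1$ one can build a smooth family over $E$ whose monodromy swaps the two rulings, and then $[\gamma\tilde C]\neq[\tilde C]$ when $C$ is a line of one ruling. In general, for rational surfaces of large Picard number the orthogonal group of the lattice $H_2(F,\Z)$ fixing $K_F$ can be infinite, so you cannot even conclude that the orbit $\{[\gamma\tilde C]\}_\gamma$ is finite, and your volume bound with respect to $\overline\omega$ is unjustified.

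This is exactly the difficulty the paper's proof is designed to overcome. Instead of appealing to Bishop--Fujiki compactness via an \emph{arbitrary} K\"ahler class on $\barX$, the paper constructs a \emph{specific} ample divisor $H$ on $\barX$ with the property that $H\cdot C_\alpha$ is the same for every lift $C_\alpha$. The construction proceeds by induction on $\rho(F)$: one runs a relative MMP on $\barX$ over $\PP^1$, shows that the birational steps either occur over $\infty$ or contract $(-1)$-curves in each fibre (hence never touch $C$, since $C^2\le -2$), and at the end reaches a fibre-type contraction $\psi:\barX\to Y$; then $H=-K_{\barX}+\psi^*L$ for suitable ample $L$ on $Y$ does the job, because its restriction to every fibre $X_\alpha\cong F$ is manifestly the same class. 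With $H$ in hand, the infinitely many $C_\alpha$ all have the same Hilbert polynomial and one concludes as you do. So your strategy is right, but the missing ingredient --- a polarisation adapted to the $\pi_1(X)$-action --- is the technical core of the argument, and it is not for free.
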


\begin{proof}
If $C^2 \geq -1$ this is standard deformation theory, so
we can suppose that $-d:=C^2 \leq -2$, hence $-K_F \cdot C \leq 0$ and $F$ is not a del Pezzo surface.
By Corollary \ref{corollaryfibrationone} we have a commutative diagram
\[
\xymatrix{
X  \ar[d]_{\varphi}  & \upX \ar[d]_{\tilde{\varphi}} \ar[l]_{\pi_X} \ar @{^{(}->}[r]  & \barX  \ar[d]_{\overline{\varphi}}
\\
E & \C \ar[l]_{\pi_Y} \ar @{^{(}->}[r]  & \PP^1
}
\]

The fibration $\overline{\varphi}$ has infinitely many fibres $X_\alpha$ isomorphic to $F$, in particular
we have infinitely many curves $C_{\alpha}$ corresponding to lifts of $C$ to the universal cover.
If one of the rational curves $C_\alpha$ deforms in $\upX$, the curve $C$ deforms in $X$.
We argue by contradiction that this is not the case: then each of the points $[C_\alpha] \in \mathcal H(\barX)$
is a connected component of the Hilbert scheme $\mathcal H(\barX)$.
Since we have infinitely many such curves we obtain a contradiction if we construct an ample
divisor $H$ on $\barX$ such that $H \cdot C_\alpha$ does not depend on $\alpha$ : indeed it is well-known
that for a fixed degree, the Hilbert scheme $\mathcal H(\barX)$ has only finitely many irreducible components.

In order to construct $H$ we argue by induction on the Picard number $\rho(F)$ of the fibre $F$.
Since $F$ is not Fano the induction starts with $\rho(F)=2$, i.e. $F$ is a Hirzebruch surface $\mathbb F_d$ 
and $C$ the unique section with self-intersection $-d$. We run a MMP on $\barX$ over $\PP^1$ and claim that the birational contractions of
such a MMP take place in the fibre over infinity.

{\em Proof of the claim.} Since all the $\tilde{\varphi}$-fibres
are smooth, it is clear that if such a MMP contracts a divisor to a point, then this divisor is contained
in the fibre over infinity. In particular $\upX$ remains in the smooth locus of the varieties, so if the exceptional
locus of a contraction is not contained in  $\barX \setminus \upX$, deformation theory on threefolds (cf. \cite[II,Thm.1.13]{Kol96}) 
shows that the contraction
is divisorial. Thus we are left to discuss contractions of a divisor $E$ onto a curve $B$: the restriction to every $\tilde{\varphi}$-fibre is then the 
blow-down of (-1)-curves. Since for $d \geq 2$ the Hirzebruch surface $F \simeq \mathbb F_d$ does not contain (-1)-curves
this is not possible.

Thus up to replacing $\barX$ by another compactification (with terminal, $\Q$-factorial singularities, but
$\upX$ contained in the smooth locus) we can assume that $\barX$ admits a fibre type contraction \holom{\psi}{\barX}{Y} over $\PP^1$. 
Since $F$ is not Fano, $Y$ is a surface that is generically a $\PP^1$-bundle over $\PP^1$.
Since $-K_{\barX}$  is $\psi$-ample we can choose an ample divisor $L$ on $Y$ such that
$H:=-K_{\barX}+\psi^* L$ is ample. The restriction of $L$ to a $\tilde{\varphi}$-fibre
depends only on the degree $e$ of $L$ on the corresponding fibre of  $Y \rightarrow \PP^1$.
Since this degree is constant in flat families, we see that the restriction of $H$ to any fibre $X_\alpha$
is isomorphic to $-K_{X_\alpha}+\psi^*  \sO_{\PP^1}(e)$. In particular $H \cdot C_\alpha$ is constant.

For the induction step we again run a MMP on $\barX$ over $\PP^1$. Arguing as before we see that if the exceptional locus of 
a birational contraction $\mu: \barX \rightarrow \barX'$ is not contained in $\barX \setminus \upX$, 
it contracts a divisor $E$ onto a curve $B$. Moreover the restriction 
to every $\tilde{\varphi}$-fibre is the contraction of (-1)-curves. In particular the fibration $\overline{X}' \rightarrow \PP^1$
is still smooth over $\C$ and the curves $C_\alpha$ are not contracted by $\mu$ (they are not (-1)-curves).
By the induction hypothesis there exists a polarisation $H'$ such that the curves $\mu(C_\alpha)$ have constant degree with respect to $H'$.
The divisor $-E$ is $\mu$-ample, so $-E+m \mu^* H'$ is ample for $m \gg 0$ and has constant degree on the curves $C_\alpha$. 
 \end{proof}

\subsection{Proof of Theorem \ref{theoremmain}}

The splitting mentioned in the statement of Theorem \ref{theoremmain} is a straightforward consequence of deep results of Grauert.
\begin{theorem} \label{theoremisotrivial}
Let \holom{f}{X}{Y} be a locally trivial 
proper fibration between complex manifolds. If the universal cover of $Y$ is Stein and contractible, then the universal cover of $X$ splits as a product:
$$\upX\simeq \tilde{F}\times\upY.$$
\end{theorem}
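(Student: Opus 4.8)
The plan is to reduce everything to the Oka--Grauert principle by pulling the fibration back to the universal cover $\upY$. First I would form the fibre product $X' := X \times_Y \upY$, with its induced projection $\holom{f'}{X'}{\upY}$. Since local triviality and properness are preserved under base change, $f'$ is again a locally trivial proper fibration with the same (connected, compact) fibre $F$. As $F$ is a compact complex manifold, its automorphism group $\mathrm{Aut}(F)$ is a finite-dimensional complex Lie group (Bochner--Montgomery), and one may take it as the structure group of the holomorphic bundle $f'$.

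The decisive step is to trivialise $f'$. The base $\upY$ is contractible, so $f'$ is topologically trivial; since $\upY$ is moreover Stein, Grauert's Oka principle identifies the holomorphic and the topological classifications of $\mathrm{Aut}(F)$-bundles over $\upY$, and hence $f'$ is holomorphically trivial:
$$
X' \simeq F \times \upY.
$$
This is the deep input of Grauert alluded to in the statement, and I expect it to be the only genuinely non-formal point of the argument; the compactness of $F$, which guarantees that the structure group is a bona fide complex Lie group, is precisely what makes the principle applicable, while both hypotheses on $\upY$ (Stein and contractible) enter here.

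It remains to pass from $X'$ to the universal cover of $X$. The projection $\holom{q}{X'}{X}$ is the pull-back along $f$ of the covering $\upY \to Y$, hence is itself a covering of $X$; as $F$ and $\upY$ are connected, so is $X' \simeq F \times \upY$. By covering space theory the universal cover $\holom{\pi_X}{\upX}{X}$ then factors through $q$, and the induced map $\upX \to X'$ is the universal cover of $X'$. Finally, $\upY$ being simply connected gives $\pi_1(X') \simeq \pi_1(F)$, so the universal cover of the product $F \times \upY$ is $\tilde{F} \times \upY$. Combining these identifications yields $\upX \simeq \tilde{F} \times \upY$, as claimed.
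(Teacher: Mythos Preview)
Your proof is correct and follows essentially the same route as the paper: pull back along $\upY\to Y$, invoke Grauert's Oka principle (using that $\mathrm{Aut}(F)$ is a complex Lie group, $\upY$ Stein and contractible) to trivialise the resulting bundle, and then pass to the universal cover of the product. The paper cites the same Grauert result and is terser in the final covering-space step, but the argument is the same.
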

\begin{proof}
Since $f$ is locally trivial and proper, 
it is a fibre bundle with fibre $F$ and group $G=\mathrm{Aut}(F)$ (a complex Lie group). Consider the fibre product
$$\upX_f=X\times_Y \upY;$$
it is a connected cover of $X$ which is also a fibre bundle over $\upY$ (fibre $F$ and group $G$). We can now apply \cite[Satz 6]{Gra58}: this fibre bundle has to be trivial and this gives a splitting
$$\upX_f\simeq F\times\upY.$$
Since $\upX_f$ is an intermediate cover, $\upX$ has to split as well.
\end{proof}

\noindent We can now combine the previous statement with the local triviality of the Albanese map of a minimal model of $X$ to conclude the proof in the general case.

\begin{proof}[Proof of Theorem \ref{theoremmain}]
If $X$ is a torus or a $\PP^1$-bundle over a torus we are done by Theorem \ref{theoremisotrivial} above.
Thus by Theorem \ref{theoremclassification} we are left to deal with the case where $X$ admits a birational
morphism  $\holom{\mu}{X}{X'}$ which is a sequence of blow-ups along elliptic curves  and the Albanese fibration
\holom{\alpha'}{X'}{E} is locally trivial with simply connected fibre $S$.
In particular by Theorem \ref{theoremisotrivial} the universal cover of $X'$ is a product $\C \times S$.
We claim that the universal cover $\upX$ is a product $\C \times \hat{S}$ where
$\hat{S} \rightarrow S$  is a sequence of blow-ups of points. 
This shows the statement on the structure of the universal cover,
moreover it shows that all the fibres of the Albanese morphism $\holom{\alpha}{X}{E}$ 
are isomorphic to $\hat{S}$. Thus $\alpha$ is locally trivial by the theorem of Fischer and Grauert.

{\em Proof of the claim.} 
We prove the statement in the situation where $\holom{\mu}{X}{X'}$ is the blow-up of a unique elliptic curve $C$, the general statement follows by induction on the number of blow-ups. 

Since $C$ maps surjectively onto $E$ we can suppose (up to finite \'etale cover) that $C$ is a section  $\holom{s}{E}{X'}$ of $\alpha'$.
Since the universal cover of $X'$ is isomorphic to the fibre product $X' \times_E \C$ we obtain a unique induced section $\holom{\tilde{s}}{\C}{\C \times S}$. The surface $\C \times S$ compactifies to $\PP^1 \times S$, so up to choosing a suitable compactification $\upX \setminus \barX$ we obtain a commutative diagram
 \[
\xymatrix{
X  \ar[d]_{\mu}  & \upX \ar[d]_{\tilde{\mu}} \ar[l]_{\pi_X} \ar @{^{(}->}[r]  & \barX  \ar[d]^{\overline{\mu}}
\\
X'  \ar[d]_{\alpha'}  & \C \times S \ar[d] \ar[l]_{\pi_{X'}} \ar @{^{(}->}[r]  & \PP^1 \times S  \ar[d]
\\
E \ar @/_/ [u]_s & \C \ar @/_/ [u]_{\tilde{s}} \ar[l]_{\pi_E} \ar @{^{(}->}[r]  & \PP^1 \ar @/_/ @{.>} [u]_{\overline{s}}
}
\]
Since $\upX$ is the blow-up of $\C \times S$ along $\tilde{s}(\C)$, the Zariski closure of $\tilde{s}(\C)$
is contained in the image of the exceptional locus of $\holom{\overline{\mu}}{\barX}{\PP^1 \times S}$.
Since $\PP^1 \times S$ is a threefold the irreducible components of this locus have dimension at most one,
so $\tilde{s}$ compactifies to a section $\holom{\overline{s}}{\PP^1}{\PP^1 \times S}$.
Choose now $e \in E$ arbitrary,
then for all $c \in \fibre{\pi_E}{e}$ one has $\tilde{s}(c)=s(e)$. Hence the compact curves 
$\overline{s}(\PP^1)$ and $\PP^1 \times s(e)$ meet in infinitely many points, so they are identical.

In particular we see that $\tilde{s}(\C)=\C \times s(e)$, so $\upX \simeq  \mbox{Bl}_{s(\C)} (\C \times S) \simeq \C \times \mbox{Bl}_{s(e)} S$. 
\end{proof}

\end{document}